\documentclass{article}

\title{Stochastic Wright's Equation:\\ Existence of Invariant Measures}
\author{M. van den Bosch$^{\rm a}$, O.\,W. van Gaans$^{\mathrm a, }$\footnote{Corresponding author.\\    Email addresses: \url{mark-bosch@hotmail.com}, \url{vangaans@math.leidenuniv.nl} and \url{S.M.VerduynLunel@uu.nl}.}\,\,, S.\,M. Verduyn Lunel$^{\mathrm b}$}

\date{\today}

\usepackage{todonotes}
\usepackage{lipsum} 

\usepackage{subfiles}
\usepackage{xr}

\usepackage[a4paper, total={5.75in, 9in}]{geometry}
\usepackage[utf8]{inputenc}
\usepackage[english]{babel}
\usepackage{graphicx} 
\usepackage{xcolor}

\usepackage{mathtools} 
\usepackage{amsmath} 
\usepackage{amssymb} 
\usepackage{amsthm}
\usepackage{xspace}
\usepackage{enumerate}
\usepackage{mathrsfs}  %for nice P, weak topology
\usepackage[hidelinks]{hyperref}
\usepackage{csquotes}

\usepackage[
    backend=biber,
    citestyle=numeric,
    bibstyle=authoryear,
    dashed=false,
    doi=false,
    isbn=false,
    url=false,
    eprint=false
]{biblatex}

\usepackage{csquotes}

\makeatletter
\input{numeric.bbx}
\makeatother

\DeclareFieldFormat[article,inbook,incollection,inproceedings,patent,thesis,unpublished, periodical]{title}{{#1\isdot}}
\DeclareFieldFormat[article,inbook,incollection,inproceedings,patent,thesis,unpublished, periodical]{volume}{\mkbibbold{#1}}

\renewbibmacro{in:}{}
\DeclareFieldFormat{pages}{#1}

\newcommand\yesnumber{\addtocounter{equation}{1}\tag{\theequation}}

\numberwithin{equation}{section}    % Numbers equations as Section.Equation

\newtheorem{theorem}{Theorem}[section]
\newtheorem{lemma}[theorem]{Lemma}
\newtheorem{proposition}[theorem]{Proposition}
\newtheorem{corollary}[theorem]{Corollary}

\theoremstyle{definition}

\newcommand{\dint}[3]{\int_{#1}^{#2}{#3} \, \mathrm{d}}

\def\gb #1{\left( #1 \right)}

\newcommand{\sef}[1]{\eqref{#1}}

\def\gb #1{\left( #1 \right)}

\def\wtilde{\widetilde}

\def\BR{{\mathbb R}}

\def\BN{{\mathbb N}}
\def\BP{{\mathbb P}}
\def\BE{{\mathbb E}}

\def\SG{\mathcal{G}}

\def\SF{\mathcal{F}}

\def\({{\rm (}}
\def\){{\rm )}}

\def\al
{\alpha}
\def\be
{\beta}

\def\ep{\varepsilon}

\def\si
{\sigma}
\def\th
{\theta}

\def\Om
{\Omega}

\usepackage{bbm}

\DeclareFieldFormat{postnote}{#1}
\DeclareFieldFormat{multipostnote}{#1}

\begin{document}
% Mogelijke todo's:
% \begin{itemize}
%     \item Vergelijkingen (4.41), (4.42) en (4.43) mogelijk wat meer uitleg; beter uitleggen welk lemma waar?
%     \item In sectie 3 gebruiken we $m$, maar in sectie 4 hebben we overal $m-1$ staan (of bijna overal?); te overwegen om dat te veranderen in sectie 3?
%     \item Sectie 4 maakt twee keer gebruik van $a$ en $b$. Voor $b$ is dat niet zo'n probleem, maar $a$ is wel verwarrend. Wellicht $\mathfrak a$ en $\mathfrak b$ voor de measurable adapted processes $a(t)$ en $b(t)$?
% \end{itemize}
% \newpage

\maketitle
\begin{center}\small
    \textsc{
    $^{\mathrm a}$Mathematical Institute,  Leiden University,\\ P.O. Box 9512, 2300 RA Leiden, The Netherlands}

    \
    
    \textsc{$^{\mathrm b}$Department of Mathematics, University of Utrecht,\\ P.O. Box 80010, 3508 TA Utrecht, The Netherlands}
\end{center}

\ 

\begin{abstract}
  \noindent   
    Wright's delay differential  equation is one of the prime examples of a fully nonlinear equation without an explicit solution and whose dynamics can be understood by analytic means. In this paper, we introduce stochastic perturbations by adding Brownian noise with a bounded Lipschitz noise coefficient to a transformed version of Wright's equation. The transformation  considered plays an important role in the deterministic theory as well. We demonstrate that this stochastically perturbed equation has (at least) two invariant measures: a trivial measure concentrated at $-1$ and a nontrivial measure on $(-1,\infty)$. The crucial and most challenging step of the proof is  showing that every solution is bounded away from $-1$ in probability. In addition, a major part of our analysis  is devoted to deriving detailed estimates for Itô processes with a negative drift. 
\end{abstract}

\textsc{Keywords:} {\footnotesize{Wright equation, Hutchinson equation, stochastic delay differential equation (SDDE), bounded in probability, tightness, invariant measure, stationary distribution}}

% \allowdisplaybreaks
\section{Introduction}
% \subsection{Wright's equation with noise}
The aim of this paper is to introduce noise to Wright's delay differential  equation
% \blfootnote{Email addresses: \url{mark-bosch@hotmail.com}, \url{vangaans@math.leidenuniv.nl} and \url{S.M.VerduynLunel@uu.nl}.} 
\begin{equation}\label{wrighteq}
	y'(t)=-r y(t-1)(1+y(t)), \quad t\ge 0,
\end{equation}
where the  constant $r>0$ is positive and fixed throughout the paper.  Since additive noise typically destroys all dynamical properties of the underlying nonlinear  system, we are interested in a class of stochastic perturbations that preserve most of the dynamical properties. 
% This allows us to prove the existence of (nontrivial) invariant probability distribution in a setting relevant in applications. 

% while keeping a similar structure of the equation. 

Wright's equation originates from a study in prime number theory \cite{wright55}. The equation cannot be solved explicitly, but it has been possible to establish much of its rich behavior rigorously.  Thus, it has become one of the leading examples of nonlinear delay differential  equations \cite{diekmann2012delay,book:hale,krisztin2008global}.   Wright's equation has also  been encountered as an ecological evolution model, which in this context is actually    referred to as Hutchinson equation \cite{hutchinson1948circular,kolmanovskii2013introduction,ruan2006delay}. This equation, or rather the corresponding equation for $y(t)+1$, is a delayed version of the logistic equation where current growth responds to past population pressure. This delayed negative feedback may represent, for example, resource depletion with recovery time or the accumulation of toxins whose effects are only felt after a delay.
% and may be argued to be less appropriate as a population model though, since the delay is in the death rate instead of the birth rate.   
Finally, due to its status in the deterministic theory, it may be of general interest to add noise to Wright's equation. The more so as it  is a fully nonlinear equation, which cannot be treated by theory for so-called semi-linear equations \cite{da2014stochastic}. 

Let us   start by considering a stochastic term with an a priori arbitrary coefficient,
\begin{equation}\label{sde0}
	\mathrm{d}y(t)=-r y(t-1)(1+y(t))\,\mathrm{d}t+\sigma(y_t)\,\mathrm{d}W(t),
\end{equation}
where the process $W=(W(t))_{t\geq 0}$ is a standard Brownian motion on a filtered  probability space $(\Om,\SF,(\SF_t)_{t\geq 0},\BP)$ satisfying the usual conditions. We use the notation $y_t$, $t\ge 0$, for the segments of a function $y\colon [-1,\infty)\to\BR$, which are defined by
\begin{equation}y_t(\theta)=y(t+\theta),\quad \theta\in [-1,0].\end{equation}
We will consider      $\sigma\colon C[-1,0]\to\BR$ to be locally Lipschitz, where we endow the space of real valued continuous functions $C[-1,0]$ with the supremum  norm $\|\cdot\|_\infty$.
 This is because we will be relying on Theorem \ref{thm:existencesolutions}, an  existence theorem for stochastic differential equations with locally Lipschitz coefficients; see, e.g., \cite[Thm. 2.2]{vandenbosch2026a} or \cite[Thm. V.38]{book:protter}.

\paragraph{A reasonable noise coefficient}
What are reasonable assumptions on $\sigma$ in light of the properties of the deterministic equation?  An important feature of the deterministic dynamics is that a solution which approaches $-1$ from above will slow down its decay and never cross $-1$. How should the noise  coefficient $\sigma$ be defined so that the stochastic system preserves this  behavior? One approach is to directly impose the condition  that $\sigma$ vanishes when $y(t)\to-1$, preventing stochastic fluctuations from pushing the solution across the boundary.

Another approach is  to first  take the transformation $x=\log(1+y)$ of variables, which is important in Wright's analysis of the deterministic problem. Put
\begin{equation}x(t)=\log(1+y(t)).\end{equation}
Due to It\^o's formula \cite[Thm. 3.3]{book:karatzas}, the process $x=(x(t))_{t\geq 0}$ satisfies
\begin{align}
	\nonumber \mathrm{d}x(t)&=  \left(\frac{1}{1+y(t)} (-r y(t-1))(1+y(t))\, \mathrm{d}t -\frac{1}{2} \frac{1}{(1+y(t))^2}  \sigma(y_t)^2\right)\, \mathrm{d}t+ \frac{1}{1+y(t)} \sigma(y_t)\, \mathrm{d}W(t)\\
	&= \left(-r y(t-1)-\frac{1}{2}\frac{\sigma(y_t)^2}{(1+y(t))^2}\right)\, \mathrm{d}t +\frac{\sigma(y_t)}{1+y(t)}\, \mathrm{d}W(t).
\end{align}
It is clear that the coefficients of this equation only have no singularities if $\sigma(y_t)$ contains a factor $1+y(t)$. Therefore we will assume that $u\mapsto\sigma(u)/(1+u(0))$ extends to a locally Lipschitz map on $C[-1,0]$, or, equivalently, that
\begin{equation}\label{choiceofsigma}\sigma(u)=(1+u(0))h(u) \quad \mbox{ for all }u\in C[-1,0],\end{equation}
where $h\colon C[-1,0]\to\BR$ is a locally Lipschitz map. Equation \eqref{sde0} then becomes 
\begin{equation}\label{sde1}
	\mathrm{d}y(t)=-r y(t-1)(1+y(t))\, \mathrm{d}t+ (1+y(t))h(y_t)\, \mathrm{d}W(t).
\end{equation}
One could (at least formally) restate \eqref{sde1} as
\begin{equation}\mathrm{d}y(t)=-r (1+y(t))\Big(y(t-1)\, \mathrm{d}t+h(y_t)\, \mathrm{d}W(t)\Big),\end{equation}
which means that the  noise actually acts on the delayed feedback. Equation \eqref{sde1} is the \emph{stochastic Wright equation} that we will consider.

Let us now address the \textit{global} existence and uniqueness of \eqref{sde1}.

\begin{proposition}\label{lem:aboveminusone}
	Assume that the locally Lipschitz coefficient $h$ is bounded, that is, there exists a constant   $\be\in\BR$ such that
	\begin{equation}|h(u)|\le \be\mbox{ for all }u\in C[-1,0].\end{equation}
	For every $\mathcal{F}_0$-measurable random variable $\varphi$ in $C[-1,0]$ with $\varphi(0)>-1$ a.s., there exists a unique global solution $y$ of \eqref{sde1} with $y_0=\varphi$ and
	\begin{equation}y(t)>-1\mbox{ for every }t\ge 0 \mbox{ a.s..}\end{equation}
\end{proposition}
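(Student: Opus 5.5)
Our plan is to work with the transformed variable $x=\log(1+y)$, for which the coefficients are globally well behaved, and transfer back. Given $\varphi$ with $\varphi(0)>-1$ a.s., the initial segment itself may dip to or below $-1$ on $[-1,0)$. However, in \eqref{sde1} the delayed argument enters only through $y(t-1)$ and $h(y_t)$, and neither of these involves the logarithm of the past. So we will not transform the history. Instead we consider the system for $x$ on $[0,\infty)$ with $y(s)=\varphi(s)$ for $s\in[-1,0]$ and $y(s)=e^{x(s)}-1$ for $s\ge 0$:
\begin{equation*}
\mathrm{d}x(t)=\Big(-r\,y(t-1)-\tfrac12 h(y_t)^2\Big)\mathrm{d}t+h(y_t)\,\mathrm{d}W(t),\qquad x(0)=\log(1+\varphi(0)).
\end{equation*}
This is well posed because $\varphi(0)>-1$ a.s.; on the null set we set $x(0)=0$, say. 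Equivalently, we define $\tilde h(v)=h(e^{v}-1)$ on segments, with the segment built from $\varphi$ and $x$ as above.

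\textbf{Step 1 (local existence and uniqueness).} We apply Theorem~\ref{thm:existencesolutions} to the equation for $y$ in \eqref{sde1} directly. Its coefficients $u\mapsto -r u(-1)(1+u(0))$ and $u\mapsto (1+u(0))h(u)$ are locally Lipschitz on $C[-1,0]$, so there is a unique maximal solution $y$ up to an explosion time $\tau=\lim_n\tau_n$, where $\tau_n=\inf\{t:|y(t)|\ge n\}$. Let $\rho=\inf\{t\ge0: y(t)\le -1\}$. On $[0,\rho\wedge\tau)$, Itô's formula as computed in the introduction shows that $x=\log(1+y)$ satisfies the displayed equation.

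\textbf{Step 2 (no explosion and no hitting of $-1$).} We argue by the method of steps on intervals $[k,k+1]$, and this is the main step. On $[0,1]$ the drift term $-r\varphi(t-1)$ is a bounded (pathwise, $\mathcal F_0$-measurable) function, and $|h|\le\beta$. Hence
\begin{equation*}
x(t)=x(0)-r\int_0^t y(s-1)\,\mathrm{d}s-\tfrac12\int_0^t h(y_s)^2\,\mathrm{d}s+\int_0^t h(y_s)\,\mathrm{d}W(s)
\end{equation*}
for $t<\rho\wedge\tau\wedge 1$. The first three terms are bounded by $|x(0)|+r\|\varphi\|_\infty+\beta^2/2$. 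The stochastic integral is a continuous local martingale with quadratic variation at most $\beta^2 t$, and so, via time change or the BDG inequality, its supremum over $[0,1]$ is a.s. finite, after extending $h(y_s)$ by $0$ beyond $\rho\wedge\tau$. Thus $\sup_{t<\rho\wedge\tau\wedge1}|x(t)|<\infty$ a.s. This means $y$ stays in a random compact subset of $(-1,\infty)$ there, which forces $\rho\wedge\tau>1$ a.s. by continuity and the definition of $\tau$. Inductively, on $[k,k+1]$ the delayed term $y(t-1)$ is already a continuous, hence bounded, path on $[k-1,k]$, and the same estimate applies. So $\rho\wedge\tau=\infty$ a.s.; this yields global existence together with $y(t)=e^{x(t)}-1>-1$ for all $t\ge0$.

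\textbf{Uniqueness.} Any global solution coincides with the maximal local solution up to its lifetime, by the uniqueness part of Theorem~\ref{thm:existencesolutions}, so it equals the constructed one. The only delicate point we anticipate is handling the stopping correctly: the random but $\mathcal F_{k}$-measurable bound on $y$ over $[k-1,k]$ must be combined with the martingale estimate. To do this we localize on events $\{\sup_{[k-1,k]}|y|\le M\}\in\mathcal F_k$ and let $M\to\infty$.
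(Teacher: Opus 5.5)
Your proposal is correct and follows essentially the paper's argument: Itô's formula for $\log(1+y)$ up to the first time $y$ reaches $-1$ or explodes, a bounded drift because the delayed term only involves already-constructed continuous values of $y$, the stochastic integral extended by zero to obtain continuous paths, and a pathwise contradiction with hitting $-1$ or explosion. The only difference is cosmetic: the paper treats all of $[0,\tau]$ at once, since $y(\cdot-1)$ on $[0,\tau]$ only sees $y$ on the compact interval $[-1,\tau-1]$, instead of using the method of steps. Also, your anticipated localization on events in $\mathcal F_k$ is unnecessary, because the argument is entirely pathwise.
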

\begin{proof}
	By Theorem \ref{thm:existencesolutions}, there exists a unique solution $y=(y(t))_{-1\leq t<\tau_y}$ with maximal existence time $\tau_y$ such that $y_0=\varphi$.
	Define the stopping time
	\begin{equation}\tau:=\sup\{t\in [0,\tau_y)\colon\, y(t)> -1\}. \end{equation}
	By Itô's formula \cite[Thm. 3.3]{book:karatzas} we have a.s.\ for $t\in [0,\tau)$ that
	\begin{equation}\label{eq:resultfromItoformula}
    \log(1+y(t))=\log(1+y(0))+\int_0^t \left[-r y(s-1)+\frac{1}{2}h(y_s)^2\right] \mathrm{d}s+\int_0^t h(y_s)\, \mathrm{d}W(s).
    \end{equation}
	Denote 
	\begin{equation}H(s)(\omega)=\left\{\begin{array}{ll} h(y_s(\omega)), & s<\tau(\omega),\\ 0& s\ge \tau(\omega).\end{array}\right.\end{equation}
	Since $h$ is bounded and continuous and $t\mapsto y_t$ is continuous on $[0,\tau_y)$, we have that $H$ is stochastically integrable with respect to $W$, $\int_0^\cdot H(s)\, dW(s)$ has a.s.\ continuous paths, and
	\begin{equation}\int_0^t H(s)\, \mathrm{d}W(s) = \int_0^t h(y_s)\, \mathrm{d}W(s)\mbox{ for all }t\in [0,\tau(\omega))\mbox{ a.s..}\end{equation}
	Consider $\omega\in\Omega$ with $y(0)(\omega)>-1$, $t\mapsto \int_0^t H(s)\, dW(s)(\omega)$ continuous on $[0,\infty)$, and $t\mapsto y(t)(\omega)$ continuous on $[-1,\tau(\omega))$. Notice that $\tau(\omega)>0$ and suppose $\tau(\omega)<\infty$. Then the function $t\mapsto y(t-1)(\omega)$ is continuous on $[0,\tau(\omega)]$ and $t\mapsto h(y_t)$ is bounded and continuous on $[0,\tau(\omega))$. Hence, equation \eqref{eq:resultfromItoformula} implies that $t\mapsto y(t)(\omega)$ is bounded above and bounded away from $-1$ on $[0,\tau(\omega))$. This contradicts the fact that $y(\tau(\omega))=-1$ or $\tau(\omega)=\tau_y(\omega)$, since $\lim_{t\uparrow\tau_y(\omega)} |y(t)(\omega)| =\infty$.
\end{proof}

Under condition \eqref{choiceofsigma}, the equation for $x(t)=\log(1+y(t))$ becomes the \emph{stochastic transformed Wright's equation}
\begin{equation}\label{sdeSTW}
\mathrm{d}x(t)=-r(e^{x(t-1)}-1)\,\mathrm{d}t-\frac{1}{2}b(x_t)^2\,\mathrm{d}t +b(x_t)\, \mathrm{d}W(t),
\end{equation}
where
\begin{equation}\label{sdeSTWb}
b(u)=h(\theta\mapsto e^{u(\theta)}-1),\quad u \in C[-1,0].
\end{equation}
Almost our entire analysis concerns \eqref{sdeSTW} rather than \eqref{sde1}. The proof of Proposition \ref{lem:aboveminusone} can easily be modified to prove the following existence result for equations like  \eqref{sdeSTW}.

\begin{proposition}\label{lem:globlexistencepuredelay}
	Let $f\colon\mathbb{R}\to\mathbb{R}$ be locally Lipschitz and let $a,b\colon C[-1,0]\to\mathbb{R}$ be locally Lipschitz and bounded. For every $\mathcal{F}_0$-measurable random variable $\varphi$ in $C[-1,0]$ there exists a unique global solution $z$ of
	\begin{equation}\mathrm{d}z(t)=f(z(t-1))\,\mathrm{d}t+a(z_t)\,\mathrm{d}t+b(z_t)\,\mathrm{d}W(t),\end{equation}
	with initial condition $z_0=\varphi$.
\end{proposition}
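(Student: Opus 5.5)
The plan is to follow the proof of Proposition \ref{lem:aboveminusone}. The difference is that there is no barrier at $-1$ here, so the only thing to exclude is blow-up. First observe that the coefficients $u\mapsto f(u(-1))+a(u)$ and $u\mapsto b(u)$ are locally Lipschitz maps $C[-1,0]\to\BR$. Indeed, $u\mapsto u(-1)$ is $1$-Lipschitz for $\|\cdot\|_\infty$, and $f$ is locally Lipschitz, so their composition is locally Lipschitz on bounded sets. Theorem \ref{thm:existencesolutions} therefore gives a unique maximal solution $z=(z(t))_{-1\le t<\tau_z}$ with $z_0=\varphi$. On $\{\tau_z<\infty\}$ we have $\limsup_{t\uparrow\tau_z}|z(t)|=\infty$ a.s.; the paper uses $\lim|z(t)|=\infty$, and either form suffices. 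Uniqueness is part of that theorem, so it remains to show $\tau_z=\infty$ a.s.

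As in the proof of Proposition \ref{lem:aboveminusone}, set $A(s)=a(z_s)$ and $B(s)=b(z_s)$ for $s<\tau_z$, and $A(s)=B(s)=0$ for $s\ge\tau_z$. These processes are progressively measurable and bounded by constants $\alpha$ and $\beta$. Hence $M(t)=\int_0^t B(s)\,\mathrm{d}W(s)$ is a continuous martingale on all of $[0,\infty)$, and it agrees with $\int_0^t b(z_s)\,\mathrm{d}W(s)$ for $t<\tau_z$ a.s. Fix an $\omega$ outside a null set such that $t\mapsto M(t)(\omega)$ is continuous on $[0,\infty)$, $z(\cdot)(\omega)$ is continuous on $[-1,\tau_z(\omega))$, and the integral identity
\begin{equation*}
z(t)=\varphi(0)+\int_0^t f(z(s-1))\,\mathrm{d}s+\int_0^t A(s)\,\mathrm{d}s+M(t)
\end{equation*}
holds for $t<\tau_z(\omega)$. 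Suppose $\tau_z(\omega)=T<\infty$.

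The key step is the \emph{method of steps}, and it uses the fact that the delay is pure and $f$ is evaluated only at $z(t-1)$. Note that $\tau_z(\omega)>0$ because $\varphi$ is continuous. Let $T_k=\min(k,T)$ and proceed by induction on $k$: if $z$ is continuous and bounded on $[-1,T_k)$, then it is bounded on $[-1,T_{k+1})$. The base case is that $z=\varphi$ on $[-1,0]$. For $t\in[0,T_{k+1})$ we have $s-1<T_k$, so $f(z(s-1))$ is bounded by the inductive hypothesis together with continuity of $f$. The $A$-integral is at most $\alpha T$, and $M$ is bounded on $[0,T]$ by continuity. Hence $z$ is bounded on $[-1,T_{k+1})$. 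After at most $\lceil T\rceil+1$ steps, $z$ is bounded on $[-1,T)$, which contradicts the blow-up at $\tau_z(\omega)$. Thus $\tau_z=\infty$ a.s.

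The main obstacle is only technical: setting up the stopped integrands $A,B$ correctly, so that the stochastic integral is defined and continuous beyond $\tau_z$ and pathwise reasoning is legitimate. The boundedness of $a$ and $b$ is what makes this work. The growth of $f$ is never a problem, because the delayed argument is always controlled by the previous step. If Theorem \ref{thm:existencesolutions} only gives $\limsup$ rather than $\lim$ blow-up, the argument is unchanged, since it shows $z$ stays bounded.
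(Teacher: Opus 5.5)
Your proposal is correct and is essentially the modification of the proof of Proposition \ref{lem:aboveminusone} that the paper intends: stop the bounded integrands at $\tau_z$ so the stochastic integral is continuous on $[0,\infty)$, argue pathwise, and contradict the blow-up at $\tau_z(\omega)<\infty$. The induction over unit intervals is unnecessary but harmless. Since $z$ is continuous on the compact interval $[-1,T-1]\subset[-1,T)$, the map $s\mapsto f(z(s-1))$ is already bounded on $[0,T]$ in one step, which is how the paper uses continuity of $t\mapsto y(t-1)$ on $[0,\tau(\omega)]$.
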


As in the deterministic theory, it turns out that the predominant dynamics of Wright's equation with a suitable noise term can be established rigorously.
The way described above to add a ``reasonable'' noise term to the deterministic Wright equation may be useful more generally. If the dynamics of a deterministic equation have a natural barrier that is not crossed, the barrier may be transformed to $-\infty$ by a logarithmic transformation. The noise coefficient in the transformed equation may then be chosen (bounded) locally Lipschitz (e.g., constant), which yields the according form of the original noise coefficient. This approach has also been explored for population models in \cite{vandenbosch2026b}, which includes the Mackey--Glass equations \cite{article:mackeyglass} and Nicholson's blowflies equation \cite{gurney1980nicholson}, where population sizes are naturally above zero.

\paragraph{Main results}
In what ways are the dynamics of the stochastic Wright's equation \eqref{sde1} similar to those of the deterministic equation \eqref{wrighteq}? Without noise, this delay equation is known to exhibit  subtle dynamical properties; see, e.g., the classical work  \cite{wright55}, the more recent contributions \cite{Tibor10, Berg18,jaquette2019proof,walther2014topics}, and the extensions developed in \cite{diaz2026global,kashchenko2013asymptotics,longo2021cross}. In this paper, we focus on  robust properties that have a chance to survive the presence of noise. For instance, the property that solutions starting above $-1$ remain above $-1$ has already been established in Proposition \ref{lem:aboveminusone}.\newpage

 The constant solutions $-1$ and $0$ are equilibrium states of system \eqref{wrighteq}. Furthermore, Wright’s equation undergoes a supercritical Hopf bifurcation at $r = \pi/2$. In Wright's seminal paper, he showed that if $r \leq \tfrac{3}{2}$, then any solution $y$ satisfying $y(t) > -1$ converges to $0$ as $t \to \infty$ \cite[Thm. 3]{wright55}. It was subsequently conjectured that this is true for all $r < \pi/2$, a statement known as Wright’s conjecture, which has recently been proved in \cite{Berg18}.
For $r > \pi/2$, it was shown by Wright that there are solutions $y$ with $y(0)>-1$ that do not converge to zero \cite[Thm. 4]{wright55}. More recently, Jones’ conjecture has been established, showing that for any $r>\pi/2$ the system admits a unique slowly oscillating periodic solution (up to time translation) \cite{jaquette2019proof}. These periodic orbits are likely  to be   globally attracting, yet   the existence of asymptotically stable rapidly oscillating periodic solutions|and\linebreak thus having co-existing attractors (multistability)|has not yet been fully ruled out; see  \cite{jaquette2019proof,walther2014topics}. 

% Whether these periodic orbits are  global attractors for any $r>\pi/2$ remains still an open problem.

In this paper, we establish the following main results. In fact, in Section \ref{sec:proof} we prove a theorem for a more general class of equations of which the present setting is a special case. 
Turning to the stochastic system, one finds  for small $r > 0$ that the stochastic Wright's equation \eqref{sde1} admits (at least) two stationary distributions; these may be viewed as stochastic analogues of the equilibria $-1$ and $0$. For sufficiently large $r$, we argue below Corollary \ref{col:Wrights} that one of the invariant measures we obtain corresponds to a stochastic version of the periodic orbit.
Finally, the notions of boundedness in probability, tightness, invariant measures, and stationary distributions are recalled in the terminology section in this introduction below.
% By a `positive constant' we always mean a deterministic real number in $(0,\infty)$. 

\begin{theorem}\label{thm:Wrights}
Consider the stochastic delay equation
\begin{equation}\label{eq:stoch-t-Wrights}
\mathrm dx(t) = -r({e^{x(t-1)} - 1})\, \mathrm{d}t - a(x_t)\,\mathrm dt + b(x_t)\,\mathrm dW(t)
\end{equation}
with $a,b\colon C[-1,0]\to\mathbb{R}$ locally Lipschitz. Assume  there are 
% positive\todo{non-negative niet gewoon ook goed?}
positive constants $\al$  and $\be$ such that
% \todo{Je kan prima $\alpha_0$ gelijk aan 0 kiezen toch?}
\begin{equation}
0 \le a(u) \le \al\quad\mbox{and}\quad b(u)^2 \le \be^2 \qquad\mbox{for all } u \in C[-1,0].
\end{equation}
For every $\SF_0$-measurable random variable $\varphi$ in $C[-1,0]$ there exists a unique global solution $x$ of \sef{eq:stoch-t-Wrights} with $x_0=\varphi$ and $(x(t))_{t\ge -1}$ is bounded above in probability. If $\al<r$, then $(x(t))_{t\ge -1}$ is also bounded below in probability and the segment process $(x_t)_{t \ge 0}$ is tight. 

Moreover, if  $\al<r$, then there exists an infinite-dimensional invariant measure  for \sef{eq:stoch-t-Wrights}.
\end{theorem}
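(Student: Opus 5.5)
Global existence and uniqueness follow directly from Proposition \ref{lem:globlexistencepuredelay} with $f(s)=-r(e^{s}-1)$, drift coefficient $-a$ and diffusion coefficient $b$. All of these are locally Lipschitz, and $a$ and $b$ are bounded. The remaining work is the probabilistic bounds, and then a Krylov--Bogoliubov argument on $C[-1,0]$ for the invariant measure.

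\textbf{Upper bound.} Write $x(t)=x(t_0)+\int_{t_0}^t[-r(e^{x(s-1)}-1)-a(x_s)]\,\mathrm ds+M(t)-M(t_0)$, where $M=\int_0^\cdot b(x_s)\,\mathrm dW(s)$ has quadratic variation growing at most like $\be^2 t$. Since $-a\le 0$ and $-r(e^{x(s-1)}-1)\le r$, the drift is bounded above by $r$. Moreover, the drift is $\le -r(e^{K}-1)$ whenever $x(s-1)\ge K$. Hence, if $x$ stays above a level $K$ on an interval of length $>1$, it feels a strongly negative drift one unit later. I would make this quantitative by comparing $x$ on unit intervals with an Itô process with bounded diffusion and a drift that is very negative when the delayed value is large. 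The estimates I expect to need are: (i) the probability that an Itô process with drift $\le c$ and diffusion $\le\be$ rises by more than $L$ in unit time is $\le \exp(-(L-c)^2/(2\be^2))$; and (ii) a process with drift $\le -\mu$ returns below a level within a fixed time with high probability. Combining these on a grid of unit intervals gives $\sup_t\BP(x(t)>K)\to0$ as $K\to\infty$. Equivalently, I would apply a Lyapunov functional such as $V(x_t)=e^{x(t)}+r\int_{t-1}^t e^{x(s)}\,\mathrm ds$ or a smoothed version. The cleanest concrete route is probably to show $\sup_t\BE[\psi(x(t))]<\infty$ for a suitable increasing $\psi$ via Itô's formula with the delay term absorbed.

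\textbf{Lower bound (main obstacle).} When $x$ is very negative, the drift is about $r-a(x_s)\ge r-\al>0$. However, this only holds once $x(s-1)$ is also negative, so the delay allows $x$ to keep falling for one unit of time after crossing a level. Suppose $x(t-1)\le -K$. On $[t-1,t]$ the drift is $\ge r e^{-\dots}$-type positive on the parts where the delayed value is negative, and $\ge -r(e^{\max x}-1)-\al$ in general. Using the upper bound in probability, the maximum of $x$ over a unit window is controlled in probability, so the process can drop at most a bounded amount (in probability) in one unit. Once $x$ has been below $-K$ for more than one unit, the drift is at least $r(1-e^{-K})-\al>0$, and the process is an Itô process with positive drift and bounded noise. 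I would derive estimates for Itô processes with negative drift (applied to $-x$): such a process, started at level $-K$, has a supremum over $[0,\infty)$ exceeding $-K+L$ with probability $\le e^{-2\mu L/\be^2}$ (the exponential martingale bound). Patching together the stopping times of excursions below $-K$, and using the bounded drop per unit interval, then gives $\sup_t\BP(x(t)<-K')\to0$. The delicate part is handling the interplay between the excursion structure and the delay uniformly in $t$. This step is where I expect most of the work.

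\textbf{Tightness and invariant measure.} Boundedness in probability of $x(t)$, together with the boundedness of $\max_{[t-1,t]}|x|$ in probability (from the unit-window arguments), controls the drift $-r(e^{x(s-1)}-1)-a$ in probability on each window. Kolmogorov/BDG moment bounds for the martingale part, localized on the event that the drift is bounded, then give equicontinuity of segments in probability. By Arzelà--Ascoli this yields tightness of $(x_t)_{t\ge0}$ in $C[-1,0]$. The segment process is Feller (by continuous dependence on initial data, which follows from the local Lipschitz theory) and Markov. Applying Krylov--Bogoliubov to the averages $\frac1T\int_0^T\BP(x_t\in\cdot)\,\mathrm dt$ gives a weak limit point, which is an invariant measure. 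This measure charges non-constant segments because of the nondegenerate or at least path-dependent dynamics, so it lives on the infinite-dimensional space $C[-1,0]$.
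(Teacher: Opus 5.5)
\textbf{The lower bound is not established.} Global existence and the final tightness/Krylov--Bogoliubov step are essentially fine; the paper does the latter by combining Lemma~\ref{lem:EsMainSegment} with Theorem~\ref{thm:fromboundedtoinvarmeas}. The lower bound in probability, which is the real content of the theorem, is where the proposal has a gap: the tool you propose is too weak.

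On $\{x(m)\le -C\}$, let $[l,l+1]$ be the last unit interval on which $x$ exceeded a fixed level $-R$; $l$ can be any of $0,\dots,m-1$. After that, $x$ climbs at a linear rate. So around time $l$ it must have dropped by roughly $C+c(m-l)$. Since the restoring term is $re^{x(s-1)}$, this forces $\sup_{[l-1,l+1]}x\gtrsim\log\bigl(C+c(m-l)\bigr)$, up to noise terms. Controlling $\BP(x(m)\le -C)$ uniformly in $m$ therefore requires $\sum_{l<m}\BP\bigl(\sup_{[l-1,l+1]}x\ge \log(C+c(m-l))\bigr)$ to be small uniformly in $m$. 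That in turn needs a time-uniform upper tail $\BP(\sup_{[l-1,l+1]}x\ge h)\lesssim e^{-\kappa h}$ with $\kappa>1$.

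Boundedness above in probability carries no rate. Likewise, ``the drop over one unit window is bounded in probability'' concerns a single window, not the $m$ windows you must patch together. With only these, the union bound over $l$ can diverge.

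This is exactly what Lemma~\ref{lem:EsMain1} supplies:
\begin{itemize}
\item It decomposes the event according to the last interval on which $z>-R$.
\item It derives the logarithmic lower bound on $\sup_{[l-1,l+1]}z$.
\item It bounds that supremum pathwise via Lemma~\ref{lem.zbddabove}, using the reverse-time supremum of an auxiliary process $U$ whose drift $-\beta$ is chosen artificially large, so that its exponential tail rate beats $\lambda$.
\item It sums the resulting probabilities with the estimates of Section~\ref{sec:estims}, using thresholds $\frac{1}{2\lambda}\log(pl+q)$ and $pl+q$.
\end{itemize}
Your exponential-martingale bound after the drop only covers how much further the process sinks. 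You also need the linear recovery, with error probabilities that are summable in $l$.

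\textbf{Two smaller points.} First, your upper-bound sketch leaves uniformity in $t$ open. A grid argument based on ``returns below a level within fixed time'' does not exclude long high excursions that began long ago. The paper obtains uniformity from the pathwise ``last time below $x_0$'' bound of Lemma~\ref{lem.zbddabove}. This is combined with a time-uniform tail estimate for $\sup_{0\le\theta\le t}(U(t)-U(\theta))$, proved by a Dambis--Dubins--Schwarz time change (Theorem~\ref{thm:TC1}) and summation over unit blocks.

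Second, your closing claim that the invariant measure charges non-constant segments is unnecessary, since ``infinite-dimensional'' only refers to the state space $C[-1,0]$. It is also false in general. The choice $a\equiv0$, $b\equiv0$, $r\le 3/2$ satisfies the hypotheses, all solutions tend to $0$ by Wright's theorem, and Krylov--Bogoliubov then returns the Dirac measure at the zero function.
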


\begin{corollary}\label{col:Wrights}
Consider the stochastic delay equation \sef{sde1},
\begin{equation}
\mathrm dy(t) = - ry(t-1)(1+y(t))\,\mathrm dt +(1+y(t))h(y_t)\,\mathrm dW(t),
\end{equation}
with $h\colon C[-1,0]\to\mathbb{R}$ locally Lipschitz. Assume  there is a positive constant $\si$ such that
% \todo{Je kan prima $\sigma_0$ gelijk aan 0 kunnen kiezen, toch? Anders hebben we ook geen resultaat voor de deterministische eq.}
\begin{equation}
h(u)^2 \le \si^2 < 2r \qquad\mbox{for all } u \in C[-1,0].
\end{equation}
For every $\mathcal{F}_0$-measurable random variable $\varphi\in C[-1,0]$ with $\varphi(0)>-1$ a.s. there exists a unique global solution $y$ of \sef{sde1} with $y_0=\varphi$, the process $(y(t))_{t\ge -1}$ is bounded in probability, and the segment process $(y_t)_{t \ge 0}$ is tight. 

Moreover, there exist (at least) two stationary distributions for \sef{sde1}: the Dirac measure at $-1$ and an invariant measure $\nu$ with $\nu[(-1,\infty)] = 1$, both of which are one-dimensional
projections of  infinite-dimensional invariant measures on $C[-1,0].$
\end{corollary}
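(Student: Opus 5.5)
The plan is to deduce Corollary \ref{col:Wrights} from Theorem \ref{thm:Wrights} through the transformation $x=\log(1+y)$. Given $h$ with $h^2\le\si^2<2r$, set $b(u)=h(\theta\mapsto e^{u(\theta)}-1)$ as in \eqref{sdeSTWb} and $a(u)=\tfrac12 b(u)^2$. Both maps are locally Lipschitz on $C[-1,0]$, since $u\mapsto e^{u}-1$ is locally Lipschitz in the supremum norm and squaring a bounded locally Lipschitz function preserves local Lipschitz continuity. They also satisfy $0\le a\le \al:=\si^2/2<r$ and $b^2\le\be^2:=\si^2$. If $\si=0$ is admitted, I take any small positive constants instead, since the hypotheses only require upper bounds. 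Thus \eqref{sdeSTW} is exactly \eqref{eq:stoch-t-Wrights}, and the condition $\al<r$ holds.

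\textbf{Existence, uniqueness and boundedness.} For an initial condition $\varphi$ with $\varphi(0)>-1$ a.s., Proposition \ref{lem:aboveminusone} gives a unique global solution $y$ with $y(t)>-1$ for all $t\ge0$. There is one subtlety: on $[-1,0)$ the function $\varphi$ may touch or go below $-1$. I handle this by applying Theorem \ref{thm:Wrights} to the initial segment $\psi=\log(1+\varphi^+)$, where $\varphi^+$ is a continuous modification of $\varphi$ that lies above $-1$ on $[-1,0]$ and agrees with $\varphi$ on a neighbourhood of $0$. Alternatively, I start at time $1$, where $y_1$ is a.s. a continuous function with values in $(-1,\infty)$, and use the Markov/restart property. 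Itô's formula, exactly as in \eqref{eq:resultfromItoformula}, shows that $x(t)=\log(1+y(t))$ solves \eqref{sdeSTW} for $t\ge1$ with an $\SF_1$-measurable initial segment. Theorem \ref{thm:Wrights} then gives that $x$ is bounded above and below in probability and that $(x_t)$ is tight in $C[-1,0]$. Since $y=e^{x}-1$ and $u\mapsto e^{u}-1$ is a continuous map of $C[-1,0]$, the images of compact sets are compact. Hence $(y_t)_{t\ge1}$ is tight and $y(t)$ is bounded in probability. The finitely many times in $[-1,1]$ are handled by path continuity.

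\textbf{Invariant measures.} The Dirac measure at the constant function $-1$ on $C[-1,0]$ is invariant for \eqref{sde1}. Indeed, $y\equiv-1$ solves \eqref{sde1} because both the drift and the diffusion contain the factor $1+y(t)$, and uniqueness gives the claim. Its projection $u\mapsto u(0)$ is $\delta_{-1}$. For the second measure, Theorem \ref{thm:Wrights} provides an invariant measure $\mu$ on $C[-1,0]$ for \eqref{sdeSTW}. I push it forward under $\Phi(u)=e^{u}-1$ to obtain $\tilde\mu=\mu\circ\Phi^{-1}$, which is supported on functions taking values in $(-1,\infty)$. Invariance transfers because, for $x$ a solution of \eqref{sdeSTW} with $x_0\sim\mu$, the process $y=\Phi(x)$ solves \eqref{sde1} with $y_0\sim\tilde\mu$. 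This uses Itô's formula in the reverse direction, with $\mathrm dy=(1+y)\big(\mathrm dx+\tfrac12 b^2\mathrm dt\big)$. Therefore $y_t=\Phi(x_t)\sim\tilde\mu$ for every $t$, and by uniqueness in law of the solution this gives invariance under the transition semigroup of \eqref{sde1}. The projection $\nu=\tilde\mu\circ\pi_0^{-1}$ satisfies $\nu[(-1,\infty)]=1$, so in particular $\nu\neq\delta_{-1}$.

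\textbf{Main obstacle.} The only delicate points are the initial segment that may reach $-1$ before time $0$, handled by the restart at time $1$, and making sure the invariance statement refers to the transition semigroup on $C[-1,0]$ restricted to the invariant set of paths above $-1$. Everything substantive is carried by Theorem \ref{thm:Wrights}.
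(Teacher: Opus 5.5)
Your proposal is correct and follows the paper's proof: pass to $x=\log(1+y)$, apply Theorem~\ref{thm:Wrights} with $a=\tfrac12 b^2$ and $\al=\tfrac12\si^2<r$, then push the resulting invariant measure forward under $u\mapsto e^{u}-1$. You are more careful than the paper about four points: the initial segment on $[-1,0)$, the transfer of boundedness and tightness from $x$ to $y$, the invariance of the Dirac measure at the constant function $-1$, and the fact that $\nu$ is the projection of an invariant measure on $C[-1,0]$. Of your two fixes for the initial segment, only the restart at time $1$ is valid. Replacing $\varphi$ by $\varphi^+$ on $[-1,0)$ changes the delay term $y(t-1)$ for $t\in[0,1]$, and hence changes the solution itself, so bounds for the modified solution say nothing about $y$.
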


\begin{proof}
By Proposition \ref{lem:aboveminusone} we have a unique global solution $y$ with $y(t)>-1$ for all $t \ge 0$. After the transformation of variables $x(t) = \log(1+y(t))$ in \eqref{sde1} we arrive at the stochastic delay equation \eqref{sdeSTW} with $b$ as in \eqref{sdeSTWb}. The assumptions on $h$ imply that $b$ is locally Lipschitz and 
\begin{equation}0\leq \frac{1}{2}b(x_t)^2 \le \frac{1}{2}\sigma^2 < r.\end{equation}
It follows from Theorem \ref{thm:Wrights} by setting $a=\frac12b^2$ that the solution $(x(t))_{t\ge -1}$ of \eqref{sdeSTW} is bounded in probability, the segment process $(x_t)_{t \ge 0}$ is tight in $C[-1,0]$, and that \eqref{sdeSTW} has an invariant measure $\mu$. 
Therefore, the image measure $\mu_0$ of $\mu$ under the map $u\mapsto u(0)\colon C[-1,0]\to\mathbb{R}$ is a stationary distribution for \eqref{sdeSTW} and, thus, the image measure $\nu$ of $\mu_0$ under the map 
\begin{equation}x \mapsto e^x - 1 : \mathbb R \to (-1,\infty)\end{equation}
is a stationary distribution for \eqref{sde1} with $\nu[(-1,\infty)] = 1$.   
\end{proof}

In the earlier work \cite{thesis:geerten}, the existence of invariant measures has been established for \eqref{sde1}, the non-transformed version of a stochastic Wright's equation. A lower bound is then rather straightforward, since solutions are clearly bounded from below by $-1$. However, an application of the Krylov--Bogoliubov theorem---or particularly Theorem \ref{thm:fromboundedtoinvarmeas}---does not guarantee that the resulting invariant measure $\nu$ is non-trivial, as it may degenerate to the Dirac measure at $-1$. By establishing boundedness in probability from below for the log-transformed equation \eqref{eq:stoch-t-Wrights}, we ensure that trajectories remain bounded away from $-1$ in probability, which in turn guarantees the existence of a \textit{non-trivial invariant measure}, i.e., one distinct from the Dirac measure at $-1$.

\color{black}
 Recall that, in deterministic systems,  the existence of steady states and periodic orbits implies that of an invariant measure \cite{eckmann1985ergodic}. We observe from the numerics in Section \ref{sec:numerics} that $\nu$ and its infinite-dimensional counterpart are  physical measures \cite{eckmann1985ergodic} that capture the long-term statistical behavior of the non-trivial solutions. Corollary \ref{col:Wrights}  may therefore be viewed as a first stochastic analogue of the deterministic dynamics, where the  non-trivial invariant measure  extends the role of the stable fixed point for $r < \pi/2$ 
 % (Wright's conjecture has just  been proved recently; see \cite{Berg18})
 and that of stable periodic orbits for $r > \pi/2$. 
 % Without noise,  for a fixed parameter $r>\pi/2$, this is likely  to be  a unique globally attracting slowly oscillating periodic orbit as   the possibility of  co-existing attractors (i.e., multistability) has not yet been fully ruled out; see  \cite{jaquette2019proof,walther2014topics}. 
 Furthermore, we conjecture that the measure $\nu$ in Corollary \ref{col:Wrights} is unique in the presence of noise $(\sigma_0 > 0)$. For the deterministic equation ($\sigma_0 = 0$), however, $\nu$ may either be the Dirac measure at $0$, corresponding to the unstable equilibrium at zero, or the invariant distribution corresponding to the slowly oscillating periodic solution when $r>\pi/2$; see Section \ref{sec:numerics} for  more details  on this matter.
 % Especially for the case without noise, proving the (non)uniqueness of the non-trivial invariant measure in the log-transformed equation \eqref{eq:stoch-t-Wrights} would bring us a huge step closer to understanding the subtle dynamics for $r>\pi/2.$

  \color{black}
  For a further discussion on our methods and results, as well as an outlook, we refer to   \cite{vandenbosch2026b} where we studied the existence of invariant measures for the stochastic Mackey--Glass equations.

\paragraph{Terminology}

Let us elaborate some of terminology used in the introduction above. Throughout the paper, we fix a probability space $(\Omega, \mathcal{F}, \mathbb P)$ together with a filtration $(\mathcal F_t)_{t\geq 0}$ satisfying the usual conditions; see Section \ref{sec:filtration} for the definitions. We canonically extend the filtration by setting $\mathcal F_s=\mathcal F_0$ for $-1\leq s<0$. 
% The process $W=(W(t))_{t\geq 0}$ is a standard Brownian motion defined on $(\Omega, \mathcal{F}, \mathbb F, \mathbb P)$. \todo{definition adapted? lokaal process adapted; wat betekent dat? Gebruik geen mathbb F.}
As we have mentioned earlier, we will rely on the following existence theorem for stochastic differential equations with locally Lipschitz coefficients.
\begin{theorem}[Theorem 2.2 of \cite{vandenbosch2026a} or Theorem V.38 of \cite{book:protter}]\label{thm:existencesolutions}
	Let $a,b\colon C[-1,0] \to \BR$ be locally Lipschitz functionals and consider the stochastic delay differential equation
	\begin{equation}\label{sdealg}
		\mathrm{d}y(t)=a(y_t)\, \mathrm{d}t+ b(y_t)\, \mathrm{d}W(t).
	\end{equation}
	For every $\SF_0$-measurable random variable $\varphi$ in $C[-1,0]$ there exists a stopping time $\tau_y$ with values in $(0,\infty]$ and an adapted process $(y(t))_{-1\le t< \tau_y}$ with almost surely continuous paths such that \eqref{sdealg} holds for all $t\in [0,\tau_y)$ a.s.\ and $y(t)=\varphi(t)$ for all $t\in [-1,0]$ a.s.. 
    
    Moreover, the stopping time $\tau_y$ can be chosen m maximal, meaning that 
	\begin{equation}\lim_{t\uparrow\tau_y} |y(t)|=\infty \mbox{ if }\tau_y<\infty,\mbox{ a.s..}\end{equation}
	If $(\tilde{y}(t))_{-1\le t\le \tau_{\tilde{y}}}$ is another process satisfying the same conditions, then $\tau_y=\tau_{\tilde{y}}$ and $y(t)=\tilde{y}(t)$ for all $t\in [0,\tau_y)$ a.s..
\end{theorem}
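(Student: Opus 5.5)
The plan is the classical localization argument: solve globally Lipschitz truncated equations, then patch their solutions together with exit times. For $n\in\BN$ let $\pi_n\colon C[-1,0]\to C[-1,0]$ be the radial retraction onto the closed ball of radius $n$,
\begin{equation*}
\pi_n(u)=u \text{ if } \|u\|_\infty\le n,\qquad \pi_n(u)=\frac{n\,u}{\|u\|_\infty} \text{ otherwise}.
\end{equation*}
This map is $2$-Lipschitz. Put $a_n=a\circ\pi_n$ and $b_n=b\circ\pi_n$. Since $a,b$ are locally Lipschitz, they are Lipschitz on bounded sets, so $a_n,b_n$ are globally Lipschitz. They are also bounded, because a Lipschitz function is bounded on a ball.

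\textbf{Step 1: globally Lipschitz equation.} For $\mathrm{d}y=a_n(y_t)\,\mathrm{d}t+b_n(y_t)\,\mathrm{d}W$ with $y_0=\varphi$, I would construct a solution by Picard iteration in the space of adapted continuous processes on $[-1,T]$ that agree with $\varphi$ on $[-1,0]$. The metric is $\BE\sup_{s\le t}|y(s)-z(s)|^2\wedge 1$; the truncation at $1$ is there so that no integrability of $\varphi$ is needed. Alternatively, one can condition on $\SF_0$ and first treat bounded $\varphi$.

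The key estimate uses $\|y_s-z_s\|_\infty\le \sup_{r\le s}|y(r)-z(r)|$, since both processes coincide on $[-1,0]$. Combining this with Doob's/BDG inequality for the stochastic integral and Cauchy--Schwarz for the drift gives
\begin{equation*}
\BE\sup_{s\le t}|y(s)-z(s)|^2\le C_T\int_0^t \BE\sup_{r\le s}|y(r)-z(r)|^2\,\mathrm{d}s .
\end{equation*}
Iterating this bound yields convergence of the Picard scheme, and Gronwall's lemma yields pathwise uniqueness. To avoid integrability issues with $\varphi$, I would first run the argument on the events $\{\|\varphi\|_\infty\le k\}\in\SF_0$, whose indicators commute with the stochastic integral, and then glue the solutions in $k$. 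Call the resulting global solution $y^{(n)}$.

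\textbf{Step 2: patching and uniqueness of the local solution.} Let
\begin{equation*}
\sigma_n=\inf\{t\ge0:\|y^{(n)}_t\|_\infty\ge n\}.
\end{equation*}
On $[0,\sigma_n\wedge\sigma_{n+1}]$, both $y^{(n)}$ and $y^{(n+1)}$ solve an equation with the same coefficients, because $\pi_n=\pi_{n+1}=\mathrm{id}$ on the ball of radius $n$. The stopped Gronwall argument from Step 1 then gives $y^{(n)}=y^{(n+1)}$ up to $\sigma_n$, and hence $\sigma_n\le\sigma_{n+1}$ a.s. This allows me to define $\tau_y=\lim_n\sigma_n$ and $y=y^{(n)}$ on $[-1,\sigma_n]$.

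\begin{itemize}
\item $\tau_y>0$ a.s., because $\sigma_n>0$ as soon as $n>\|\varphi\|_\infty$.
\item $y$ is adapted and continuous, and solves the equation on $[0,\tau_y)$.
\item Uniqueness: if $\tilde y$ is another solution, its own exit times $\tilde\sigma_n$ stop it in the ball of radius $n$. The same stopped Gronwall argument gives $\tilde y=y$ on $[0,\sigma_n\wedge\tilde\sigma_n]$, which forces $\sigma_n=\tilde\sigma_n$.
\end{itemize}

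\textbf{Step 3: maximality.} If $\tau_y<\infty$, then $\sup_{t<\tau_y}|y(t)|=\infty$ by construction, because $\|y_{\sigma_n}\|_\infty=n$. This does not yet give the required full limit $\lim_{t\uparrow\tau_y}|y(t)|=\infty$; the problem is to exclude oscillation, that is, $\liminf_{t\uparrow\tau_y}|y(t)|<\infty$ while the $\limsup$ is infinite. I expect this to be the main obstacle.

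I would handle it with the standard upcrossing argument from Protter's proof. On the event that $\tau_y<\infty$ and $|y|$ returns below some level $K$ infinitely often before $\tau_y$, the path makes infinitely many crossings from $K$ to $K+1$. During each crossing, the increment of $y$ equals
\begin{equation*}
\int a(y_s)\,\mathrm{d}s+\int b(y_s)\,\mathrm{d}W(s)
\end{equation*}
over that time interval.

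For a delay equation, however, the segment $y_s$ need not stay bounded during the crossing, since past values may be large. I would therefore first show that $\|y_t\|_\infty$ and $\sup_{r\le t}|y(r)|$ blow up together as $t\uparrow\tau_y$. On such a crossing the coefficients are then controlled only on a fixed ball, and a Borel--Cantelli estimate on the quadratic variation shows that infinitely many crossings in a finite time interval have probability zero. Should this turn out to fail for the delay structure, I would fall back on interpreting maximality as $\sup_{t<\tau_y}|y(t)|=\infty$. That weaker form is all that is actually used in the proof of Proposition~\ref{lem:aboveminusone}.
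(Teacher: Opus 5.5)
Your Steps 1 and 2 follow the standard localization route: truncate, solve the globally Lipschitz equation, and patch at exit times. This is the route behind the results the paper cites; the paper itself gives no proof.

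\textbf{A false inference in Step 1.} Closed balls in $C[-1,0]$ are not compact. So a functional that is locally Lipschitz in the pointwise sense need not be Lipschitz, or even bounded, on bounded sets. You must take ``locally Lipschitz'' to \emph{mean} ``Lipschitz on bounded sets''. This is not a technicality. Under the pointwise reading one can realise $y(t)=\sin\frac{1}{1-t}$ as the solution of $\mathrm dy=a(y_t)\,\mathrm dt$: extend $y_t\mapsto y'(t)$ from the closed curve $\{y_t:0\le t<1\}$ using a Lipschitz partition of unity. This is a bounded solution that cannot be continued past $t=1$, so no form of the explosion criterion survives.

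\textbf{The genuine gap is Step 3, and your proposed repair runs in the wrong direction.} It is true, and immediate, that $\|y_t\|_\infty\to\infty$ as $t\uparrow\tau_y<\infty$: pick $t_K\in(\tau_y-1,\tau_y)$ with $|y(t_K)|>K$, and then $\|y_t\|_\infty>K$ for all $t\in[t_K,\tau_y)$. But this says exactly that during late crossings from $K$ to $K+1$ the segments $y_s$ leave every ball. So $a(y_s)$ and $b(y_s)$ are not controlled, the crossings may be arbitrarily fast, and the Borel--Cantelli argument has nothing to work with.

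No argument can close this, because the full limit is false for general functionals. Let $y(t)=\frac{1}{1-t}\sin\frac{1}{1-t}$ on $[-1,1)$. By analyticity, $t\mapsto y_t$ is injective and locally bi-Lipschitz on $[0,1)$, and $\|y_t\|_\infty\to\infty$. Hence the part of the curve lying in $\{\|u\|_\infty\le R\}$ is contained in a compact arc, on which $y_t\mapsto y'(t)$ is Lipschitz. Extend this map to a functional $a$ that is Lipschitz on bounded subsets of $C[-1,0]$, using McShane extensions on each ball glued by a radial Lipschitz partition of unity. Take $b=0$ and $\varphi=y_0$. The unique solution is $y$, with $\tau_y=1$, and $y(1-\frac{1}{n\pi})=0$ for every $n$.

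Hence, for delay equations, the correct maximality statement is $\sup_{t<\tau_y}|y(t)|=\infty$, equivalently $\lim_{t\uparrow\tau_y}\|y_t\|_\infty=\infty$. Your Step 2 already delivers this. The limit of $|y(t)|$ itself needs extra structure. One sufficient condition is that the coefficients are bounded on sets where $|u(0)|$ and $\sup_{\theta\le-\delta}|u(\theta)|$ are bounded, for some fixed $\delta>0$. This holds for \eqref{sde1} with bounded $h$: there the delayed values stay in a compact past window, and your crossing argument does go through.

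So your ``fallback'' is not a retreat but the correct statement. As you observe, it is all that Proposition~\ref{lem:aboveminusone} uses.
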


A stochastic process $(y(t))_{-1\le t<\tau}$ for a stopping time $\tau$ can be trivially extended to a stochastic process $\hat y(t)=y(t)\mathbbm 1_{t<\tau}$, for all $t\geq -1$, and is said to be \textit{adapted} if $y(t)\mathbbm 1_{t<\tau}$ is $\mathcal F_t$-measurable for all $t\geq -1.$
The process $(y(t))_{-1\le t<\tau_y}$ in Theorem \ref{thm:existencesolutions} is called a \emph{solution} of \eqref{sdealg} and $\tau_y$ is called its \emph{(maximal) existence time}. If $\tau_y=\infty$ a.s., then $y$ is said to be a \emph{global} solution. For almost every $\omega\in\Omega$, the function $t\mapsto y(t)(\omega)$ is continuous on $[-1,\tau_y(\omega))$, hence uniformly continuous on every closed subinterval of $[-1,\tau_y(\omega))$, so that $t\mapsto y_t\colon [0,\tau_y(\omega))\to C[-1,0]$ is continuous. In other words, the segment process $(y_t)_{0\le t<\tau_y}$ has a.s.\ continuous paths.
% Let us assume from now on that $\sigma\colon C[-1,0]\to\BR$ is locally Lipschitz. 

The statement in Theorem \ref{thm:existencesolutions} that equation \eqref{sdealg} holds for all $t\in [0,\tau_y)$ means that 
\begin{equation}y(t)=\varphi(0)+\int_0^t a(y_s)\,\mathrm{d}s+\int_0^t b(y_s)\,\mathrm{d}W(s)\quad \mbox{for all }t\in [0,\tau_y)\mbox{ a.s.},\end{equation}
which, more precisely, means that there exists a sequence of finite stopping times $(\tau_n)_{n\in\mathbb{N}}$ with $\tau_n\le \tau_{n+1}$ a.s.\ for all $n$, $\tau_n\to\tau_y$ a.s.\ as $n\to\infty$, and 
\begin{equation}
y(t\wedge\tau_n)=\varphi(0)+\int_0^{t\wedge\tau_n} a(y_s^{\tau_n})\,\mathrm{d}s + \int_0^{t \wedge\tau_n} b(y_s^{\tau_n})\,\mathrm{d}W(s)\quad\mbox{for all }t\ge 0\mbox{ a.s.},
\end{equation} 
for every $n\in\mathbb{N}$. In here, $(t\wedge \tau_n)(\omega)=\min\{t,\tau_n(\omega)\}$ and $(y_s^{\tau_n})(\omega)=\theta\mapsto y(s\wedge\tau_n(\omega)+\theta)(\omega)$.

A solution $y$ of  \eqref{sdealg} with maximal existence time $\tau_y = \infty\ \mbox{a.s.}$ is called {\em stationary} if the distribution of $y(t)$ equals the distribution of $y(0)$ for all $t \ge 0$. The distribution of $y(0)$ is then called a \emph{stationary distribution} of equation \eqref{sdealg}. This measure is a Borel probability measure on $\mathbb{R}$. If, for each $t\ge 0$, the segment $y_t$ has the same distribution as the initial value $\varphi$, then we call this measure $\nu$ on $C[-1,0]$ an \emph{invariant measure} for \eqref{sdealg}. The image measure of $\nu$ under the map
\begin{equation}u \mapsto u(0) : C[-1,0] \to \BR\end{equation}
is then a stationary distribution. Usually, the terms ``stationary distribution'' and ``invariant measure'' are used interchangeably, and it should be understood from the context whether the scalar process or the segment process is considered.

A family $(Z_\al)_{\al \in I}$ of real valued random variables is {\em bounded above (below) in probability} if for every $\varepsilon > 0$ there exists $R_\varepsilon \in \BR$ such that for all $\al \in I$ we have
\begin{equation}\BP\gb{ Z_\al \le R_\varepsilon} \ge 1 - \varepsilon\qquad \Bigl(\BP\gb{ Z_\al \le R_\varepsilon} \ge 1 - \varepsilon\Bigr).\end{equation}
The family is bounded in probability if it is both bounded above and below in probability. 
A family $(Z_\al)_{\al \in I}$  of random variables with values in a topological space $E$ is called \emph{tight} if for every $\varepsilon>0$ there exists a compact set $K_\varepsilon\subseteq E$ such that $\mathbb{P}(Z_\al\in K_\varepsilon)\ge 1-\varepsilon$ for all $\al\in I$. Note that in the case $E = \BR^d$, we have that $(Z_\al)_{\al \in I}$ is tight if and only if it is bounded in probability.

Finally, observe that if $(x(t))_{t\ge -1}$ is a stochastic process with almost surely continuous paths, then for every $-1\le t_0<t_1$ we have that the process $(x(t))_{t\in [t_0,t_1]}$ is bounded in probability. Indeed, the sets $S_n:=\{\omega\in\Omega\mid -n\le x(t)\le n\mbox{ for all }t\in [t_0,t_1]\}$ are nested and their union is a subset of $\Omega$ of probability $1$, hence for every $\varepsilon>0$ there exists $N$ such that $\mathbb{P}(S_N)\ge 1-\varepsilon$.

\paragraph{Approach and organization}
It turns out that proving Theorem \ref{thm:Wrights} is all about showing that solutions of \eqref{eq:stoch-t-Wrights} are bounded in probability|the challenging part is demonstrating the bound from below in probability, which is related to being strictly bounded away from $-1$ in probability for the original equations. The other assertions then follow by known results for more general equations. Indeed, one can   exploit the structure of the equation and use Doob's maximal inequality or the Burkholder--Davis--Gundy inequality to obtain that the process $(\|x_t\|)_{t\ge 0}$ is also bounded in probability. Kolmogorov's tightness criterion and, once again,  the structure of the equation then yields that the segment process $(x_t)_{t\ge 0}$ is tight in $C[-1,0]$. As a result of the Krylov--Bogoliubov theorem,  existence of an invariant measure follows. 

These steps above are given in detail in \cite[Prop. 3.5, Prop 4.6, and Cor. C.2]{vandenbosch2026a} for more general equations. Corollary 4.5 in \cite{vandenbosch2026a} summarizes the ensuing result that we will be using, and reads as follows.

\begin{theorem}[Corollary 4.5 in \cite{vandenbosch2026a}]\label{thm:fromboundedtoinvarmeas}
Consider the stochastic differential equation \eqref{sdealg}, where $a,b\colon C[-1,0] \to \BR$ are locally Lipschitz functionals. Assume there are positive constants $\alpha$ and $\beta$ such that
\begin{equation}
	a(u) \le \al\quad\mbox{and}\quad  b(u)^2 \le \be^2 \qquad\mbox{for all } u \in C[-1,0].
\end{equation}
If there exists a global solution $(y(t))_{t\ge -1}$ which is bounded in probability, then the segment process $(y_t)_{t\ge 0}$ is tight in $C[-1,0]$. If, in addition, all solutions exist globally, then there exists an invariant measure for \eqref{sdealg}.
\end{theorem}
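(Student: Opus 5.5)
The plan is to first upgrade boundedness in probability of the scalar process $(y(t))_{t\ge -1}$ to boundedness in probability of $(\|y_t\|_\infty)_{t\ge 0}$. Next I would show that the segment laws are uniformly equicontinuous in probability, which gives tightness in $C[-1,0]$ by Arzel\`a--Ascoli. Finally I would apply the Krylov--Bogoliubov procedure to time averages of the segment laws.

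\textbf{Step 1 (sup-norm bounds).} For $t\ge 1$ and $s\in[t-1,t]$, write $M(s)=\int_0^s b(y_r)\,\mathrm{d}W(r)$, so that
\begin{equation*}
y(s)=y(t-1)+\int_{t-1}^s a(y_r)\,\mathrm{d}r+M(s)-M(t-1).
\end{equation*}
Since $a\le\al$, this gives the upper bound $y(s)\le y(t-1)+\al+\sup_{r\in[t-1,t]}|M(r)-M(t-1)|$. For the lower bound the drift has no lower bound, so I would integrate backwards from the right endpoint instead:
\begin{equation*}
y(s)=y(t)-\int_s^t a(y_r)\,\mathrm{d}r-(M(t)-M(s))\ge y(t)-\al-2\sup_{r\in[t-1,t]}|M(r)-M(t-1)|.
\end{equation*}
By Doob's maximal inequality and $b^2\le\be^2$, the supremum of the martingale increment over a unit interval has second moment at most $4\be^2$, uniformly in $t$. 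Combined with the assumed boundedness in probability of $y(t-1)$ and $y(t)$, this shows that $(\|y_t\|_\infty)_{t\ge 1}$ is bounded in probability. The initial range $t\in[0,1]$ is covered by path continuity, as remarked in the terminology paragraph.

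\textbf{Step 2 (equicontinuity) --- the main obstacle.} The martingale part is harmless. By the Burkholder--Davis--Gundy inequality, $\BE|M(s')-M(s)|^4\le C\be^4|s'-s|^2$, so Kolmogorov's criterion gives a H\"older modulus for $M$ on $[t-1,t]$ whose law does not depend on $t$. The drift part $A(s)=\int_{t-1}^s a(y_r)\,\mathrm{d}r$ is the difficulty, because only an upper bound on $a$ is available. What is immediate is that $s\mapsto \al s-A(s)$ is nondecreasing, so $A$ has increments at most $\al|s'-s|$ in the upward direction. The downward oscillation of $A$ equals the oscillation of $y-M$, and that is exactly what we want to control, so the naive argument is circular.

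To break this, I would exploit local Lipschitz continuity of $a$ along a bootstrap. On the event that $y_{t-1}$ lies in a fixed compact set $K$ and the noise increment is small, continuous dependence of the solution on the initial segment (a consequence of the local Lipschitz property, as in Theorem~\ref{thm:existencesolutions}) keeps $y_r$, $r\in[t-1,t]$, in a compact set $K'$. On $K'$ the continuous functional $a$ is bounded, which yields a Lipschitz bound for $A$. The argument is then an induction in unit time steps. One needs $K$ chosen so that the estimates close uniformly in $t$, using Step~1 to control the size of the segments. I expect this closure to be the delicate point. If it fails in this generality, I would fall back on the structure present in our application: for \eqref{eq:stoch-t-Wrights} the unbounded part of the drift is $-r(e^{x(t-1)}-1)$, which is controlled directly by $\|x_{t-1}\|_\infty$ from Step~1.

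\textbf{Step 3 (invariant measure).} With tightness of $(y_t)_{t\ge0}$ established, I would define $\mu_T=\frac1T\int_0^T\BP(y_t\in\cdot)\,\mathrm{d}t$. This family is again tight, so by Prokhorov's theorem a subsequence $\mu_{T_n}$ converges weakly to some $\mu$. Since all solutions exist globally and depend continuously on the initial segment, the segment process is Feller on $C_b(C[-1,0])$. The standard Krylov--Bogoliubov computation then shows $\mu P_s=\mu$ for every $s\ge 0$, so $\mu$ is an invariant measure.
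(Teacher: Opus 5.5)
\textbf{Step 2 is a genuine gap.} Your Steps 1 and 3 are sound and follow the route the paper sketches. The forward/backward Doob argument in Step 1 is the same trick as in Lemma~\ref{lem:EsMainSegment}, and Step 3 is the Krylov--Bogoliubov step. Step 2, however, leaves tightness unproved. You propose a bootstrap you yourself doubt, and a fallback that only treats Wright's equation, not the general statement.

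The bootstrap would not close as described, for three reasons.
\begin{enumerate}
\item Continuous dependence on the initial segment holds in probability (by localization), not pathwise on an event like ``the noise increment is small''. So it does not place $y_r$, $r\in[t-1,t]$, in a \emph{fixed} compact set $K'$.
\item Requiring $y_{t-1}\in K$ with $K$ compact already presupposes the tightness you want to prove.
\item An induction in unit time steps accumulates exceptional probabilities. It therefore cannot give compact sets that work uniformly in $t$.
\end{enumerate}

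\textbf{The missing idea.} You do not need compactness to bound the drift, and there is no circularity. ``Locally Lipschitz'' here means Lipschitz on bounded subsets of $C[-1,0]$. This is what the blow-up alternative $|y(t)|\to\infty$ in Theorem~\ref{thm:existencesolutions} requires. Hence $K_R:=\sup_{\|u\|_\infty\le R}|a(u)|<\infty$ for every $R$.

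By your Step 1, for given $\ep>0$ there is $R$ with $\BP(E_t)\ge 1-\ep$ for all $t\ge 1$, where $E_t=\{\|y_{t-1}\|_\infty\le R,\ \|y_t\|_\infty\le R\}$. On $E_t$ every segment $y_r$, $r\in[t-1,t]$, has norm at most $R$. Hence
\[
|y(s)-y(s')|\le K_R|s-s'|+|M(s)-M(s')|,\qquad s,s'\in[t-1,t].
\]
The Burkholder--Davis--Gundy inequality gives $\BE|M(s)-M(s')|^4\le C_4\be^4|s-s'|^2$, with constants independent of $t$. Kolmogorov's criterion then gives, for fixed $\gamma<1/4$, a number $H$ such that for every $t\ge 1$ the $\gamma$-H\"older seminorm of $M$ on $[t-1,t]$ is at most $H$ with probability at least $1-\ep$.

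Therefore, with probability at least $1-2\ep$, $y_t$ lies in
\[
\{u:\|u\|_\infty\le R,\ |u(\theta)-u(\theta')|\le K_R|\theta-\theta'|+H|\theta-\theta'|^\gamma\},
\]
which is compact by Arzel\`a--Ascoli. The range $t\in[0,1]$ is covered by tightness of the single random path $y|_{[-1,1]}$ in $C[-1,1]$. This is the ``structure of the equation'' step in the paper's sketch. Your Wright fallback is just the special case in which $K_R$ can be written down explicitly.
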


In order to prove Theorem \ref{thm:Wrights} and Corollary \ref{col:Wrights}, it  therefore remains to show that every solution of \eqref{eq:stoch-t-Wrights} is bounded above in probability and,  if $\al_1<r$, also bounded below in probability. Note that Propositions \ref{lem:aboveminusone} and \ref{lem:globlexistencepuredelay} take care of the global existence of solutions of \eqref{sde1} and \eqref{eq:stoch-t-Wrights}.

%In subsection \ref{sec:details} we provide some details on terminology used in the introduction above. 

Section \ref{sec:prelims} collects standard results on stochastic processes that we require for our analysis. Note that the goal of this section is to make our proofs accessible to readers that have a mostly deterministic background. We proceed with  subtle estimates for Itô processes with negative drift in Section \ref{sec:estims}. In Section \ref{sec:proof} we  start with a pathwise estimate of solutions of \eqref{eq:stoch-t-Wrights} in terms of the driving stochastic process. This estimate is then combined with those of Section \ref{sec:estims} to show that solutions of \eqref{eq:stoch-t-Wrights} are bounded in probability, implying the existence of a non-trivial invariant measure. Finally, we briefly illustrate the support of these non-trivial invariant measures numerically in  Section \ref{sec:numerics}.

% \paragraph{Further background}

\section{Preliminaries of stochastic processes}\label{sec:prelims}

In this section, we highlight a few main ingredients from the theory of continuous-time  processes, martingales,  and stochastic integrals, which we will need in our analysis below.  We refer to \cite{Bosch24,book:chung,da2014stochastic,book:Kurtz,evans2012stoch,book:jacod,book:kallenberg,book:karatzas,book:kloeden,book:mao,book:protter,book:revuz,RogWil94,unpublished:timo} for  complete courses on this subject.

\subsection{Filtration and stopping times}\label{sec:filtration}

A \textsl{filtration} of $(\Om,\SF,\BP)$ with index set $T \subseteq \mathbb R$ is a collection of $\si$-algebras $({\SF_t})_{t \in T}$ in $\Om$ such that $\SF_s \subseteq \SF_t \subseteq \SF$ for all $s,t \in T$ with $s \le t$. A filtration $({\SF_t})_{t \in T}$ is said to satisfy the \textsl{usual conditions} if $\SF_0$ is $\BP$-complete (i.e., $\SF_0$ contains all $\BP$-null sets, which requires that $\SF$ is $\BP$-complete) and 
\begin{equation}\SF_t = \bigcap_{s \in T, s > t} \SF_s\qquad\hbox{ for all } t \in T.\end{equation}
% If only a filtration $({\SF_t})_{t \in (0,\infty)}$ is provided\todo{T is subset zonder infinity; wordt niet gebruikt}, we set $\SF_\infty := \mcup_{t \in (0,\infty)} \SF_t$.
A stochastic process $(X(t))_{t\geq 0 }$ is called \textit{adapted} to the filtration $({\SF_t})_{t \in T}$ when $X(t)$ is $\mathcal F_t$-measurable for all $t\in T.$ A \textsl{stopping time} for $({\SF_t})_{t \in T}$ is a $[0,\infty]$-valued random variable $\tau$ such that
\begin{equation}\{ \tau \le t \} \in \SF_t\qquad\hbox{for every } t \in T.\end{equation}
For a stopping time $\tau$ together with a filtration $({\SF_t})_{t \in T}$, we define
\begin{equation}\SF_\tau := \left\{ A \in \SF \mid A \cap \{ \tau \le t \} \in \SF_t\ \hbox{ for all } t \in T\right\}.\end{equation}
Note that a deterministic time is a stopping time. If $({S(t)})_{t \in T}$ is an increasing family of stopping times for  $({\SF_t})_{t\geq 0}$ such that $t \mapsto S(t)$ is continuous a.s., then
\begin{equation}\SG_t := \SF_{S(t)},\qquad t \in T,\end{equation}
defines a filtration of $(\Om,\SF,\BP)$   satisfying the usual conditions; see \cite[II.7.3 and  IV.30]{RogWil94}.

\subsection{Brownian motion and its maximum process}

A \textit{Brownian motion} on a filtered probability space $(\Om,\SF,(\SF_t)_{t \ge 0},\BP)$ is an adapted stochastic process $({B(t)})_{t \ge 0}$ when $B(0) = 0$ a.s.,  $t \mapsto B(t)$ is continuous a.s., and the increments
\begin{equation}B(t_3) - B(t_2)\end{equation}
are normally distributed, with mean $0$ and variance $t_3-t_2$, and are independent of $\SF_{t_1}$ for all times $0 \le t_1 \le t_2 \le t_3$.
% and the increment  
% \begin{equation}B(t) - B(s)\end{equation}
% is normally distributed with mean $0$ and variance $t-s$, for all $0 \le s \le t$.
For a proof of the next lemma, we refer to \cite[Thm. I.12.1]{RogWil94} or \cite[Thm. I.32]{book:protter}.

\begin{lemma}\label{lem:BM1} 
Let $({B(t)})_{t \ge 0}$ be a Brownian motion on $(\Om,\SF,(\SF_t)_{t \ge 0},\BP)$ and let $\tau$ be a stopping time for $({\SF_t})_{t\geq 0}$ with $\tau < \infty$ a.s.. Then $({\wtilde B(t)})_{t \ge 0}$, defined  by
\begin{equation}\wtilde B(t) := B(\tau+t) - B(\tau),\qquad t \ge 0,\end{equation}
is a Brownian motion on $(\Om,\SF,(\SF_{\tau+t})_{t \ge 0},\BP)$.
\end{lemma}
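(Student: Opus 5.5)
The proof is the strong Markov property of Brownian motion, and I would verify the defining properties of a Brownian motion for $\widetilde B$ one at a time. Adaptedness is immediate: $B(\tau+t)$ and $B(\tau)$ are $\SF_{\tau+t}$-measurable, because a continuous adapted process evaluated at a finite stopping time $S$ is $\SF_S$-measurable, and $\SF_\tau\subseteq\SF_{\tau+t}$. Clearly $\widetilde B(0)=0$ and the paths are continuous a.s. It then remains to show that for $0\le t_1\le t_2\le t_3$ the increment $\widetilde B(t_3)-\widetilde B(t_2)=B(\tau+t_3)-B(\tau+t_2)$ is $N(0,t_3-t_2)$ and independent of $\SF_{\tau+t_1}$. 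Since $\tau+t_2$ is itself a finite stopping time and $\SF_{\tau+t_1}\subseteq\SF_{\tau+t_2}$, it suffices to prove the following claim.

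\emph{Claim.} For any a.s.\ finite stopping time $S$ and any $h\ge 0$, the increment $B(S+h)-B(S)$ is $N(0,h)$ and independent of $\SF_S$.

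Equivalently, for every $A\in\SF_S$ and $\lambda\in\BR$,
\begin{equation*}
\BE\bigl[\mathbbm 1_A e^{i\lambda(B(S+h)-B(S))}\bigr]=\BP(A)e^{-\lambda^2h/2}.
\end{equation*}

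I would prove this identity in two steps. First take $S$ with values in a countable set $\{s_k\}$. Then $A\cap\{S=s_k\}\in\SF_{s_k}$, and the increment $B(s_k+h)-B(s_k)$ is independent of $\SF_{s_k}$ with law $N(0,h)$ by the definition of Brownian motion. Summing over $k$ gives the identity.

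For general $S$, approximate by the dyadic stopping times $S_n=2^{-n}(\lfloor 2^nS\rfloor+1)$. These decrease to $S$, and $\SF_S\subseteq\SF_{S_n}$ because $S\le S_n$. The discrete case applies to each $S_n$ with the same $A$. Path continuity gives $B(S_n+h)-B(S_n)\to B(S+h)-B(S)$ a.s., and bounded convergence passes the identity to the limit.

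The main obstacle is only this measurability and approximation bookkeeping, which is standard. The remaining work is checking that $\SF_S\subseteq\SF_{S_n}$ and that $\SF_{\tau+t}$ is indeed a filtration; the latter holds since $\tau+t$ is increasing and continuous in $t$, as noted in Section \ref{sec:filtration}.
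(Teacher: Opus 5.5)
Your proof is correct. The paper does not prove this lemma itself; it only cites Rogers--Williams and Protter. Your argument is the standard textbook proof of the strong Markov property, and each step checks out:
\begin{itemize}
\item the reduction to a single increment at the stopping time $S=\tau+t_2$, using $\mathcal F_{\tau+t_1}\subseteq\mathcal F_{\tau+t_2}$;
\item the characteristic-function formulation of ``$N(0,h)$ and independent of $\mathcal F_S$'';
\item the countably-valued case via $A\cap\{S=s_k\}\in\mathcal F_{s_k}$;
\item the passage to general $S$ through the dyadic stopping times $S_n\downarrow S$, with $\mathcal F_S\subseteq\mathcal F_{S_n}$ and bounded convergence.
\end{itemize}
A common alternative, used for instance in Protter's treatment of L\'evy processes, applies optional sampling to the martingale $e^{i\lambda B(t)+\lambda^2 t/2}$ at the bounded stopping times $\tau\wedge n$ and then lets $n\to\infty$. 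Your route is more elementary: it uses only the defining independence property of $B$ and right-continuity of its paths, and needs no optional sampling theorem.
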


The distribution of the maximum process of a Brownian motion is known exactly, and we refer to, e.g., Remark (i) below Corollary I.13.3 of \cite{RogWil94}. The probability density function is as follows.

\begin{lemma}\label{lem:BM2} 
Let $({B(t)})_{t \ge 0}$ be a Brownian motion on $(\Om,\SF,(\SF_t)_{t \ge 0},\BP)$ and define
\begin{equation}\label{eq:BM1}
B^\ast(t) := \max_{0 \le s \le t} B(s),\qquad t \ge 0.
\end{equation}
For every $t > 0$, the maximum process $B^\ast(t)$ has the probability density function
\begin{equation}f_t(x) = 
\begin{cases}
\sqrt{\dfrac{2}{\pi t}}\exp\left({-\dfrac{x^2}{2t}}\right),&\qquad x \ge 0,\\
0,&\qquad x < 0.
\end{cases}
\end{equation}
\end{lemma}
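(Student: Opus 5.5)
The plan is to use the reflection principle: for every $x\ge 0$,
\begin{equation*}
\BP\big(B^\ast(t)\ge x\big)=2\,\BP\big(B(t)\ge x\big)=2\int_x^\infty \frac{1}{\sqrt{2\pi t}}\exp\left(-\frac{y^2}{2t}\right)\mathrm{d}y .
\end{equation*}
Differentiating in $x$ then gives exactly $f_t$ on $[0,\infty)$. For $x<0$ there is nothing to show, because $B^\ast(t)\ge B(0)=0$ a.s. The distribution function of $B^\ast(t)$ obtained this way is continuous and piecewise $C^1$, so it is absolutely continuous, and its derivative is a density.

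\textbf{Key steps.}
\begin{enumerate}
\item Fix $x>0$ (the case $x=0$ is trivial) and set $\tau_x:=\inf\{s\ge0: B(s)=x\}$. This is a stopping time by path continuity and the usual conditions. Lemma \ref{lem:BM1} requires a finite stopping time, so I will work with $\sigma:=\tau_x\wedge t$ instead of $\tau_x$; this avoids appealing to recurrence of Brownian motion.
\item By continuity of paths, $\{B^\ast(t)\ge x\}=\{\tau_x\le t\}$. On this event $B(\sigma)=x$, and hence
\begin{equation*}
B(t)-x=\wtilde B(t-\sigma),\qquad \wtilde B(s):=B(\sigma+s)-B(\sigma).
\end{equation*}
Since $B(t)\ge x$ forces $\tau_x\le t$, this gives
\begin{equation*}
\BP\big(B(t)\ge x\big)=\BP\big(\tau_x\le t,\ \wtilde B(t-\sigma)\ge 0\big).
\end{equation*}
\item By Lemma \ref{lem:BM1}, $\wtilde B$ is a Brownian motion for the filtration $(\SF_{\sigma+s})_{s\ge0}$. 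In particular, $\wtilde B$ as a process is independent of $\SF_\sigma$, and both $\sigma$ and $\{\tau_x\le t\}$ are $\SF_\sigma$-measurable. Conditioning on $\SF_\sigma$ and using Fubini with the joint measurability of $(s,\omega)\mapsto\wtilde B(s)(\omega)$, I get
\begin{equation*}
\BP\big(\tau_x\le t,\ \wtilde B(t-\sigma)\ge 0\big)=\BE\Big[\mathbbm 1_{\tau_x\le t}\, g(t-\sigma)\Big],\qquad g(s):=\BP\big(\wtilde B(s)\ge0\big).
\end{equation*}
Here $g(s)=1/2$ for $s>0$ by symmetry of the normal law. The event $\{\tau_x=t\}$ has probability zero, since it is contained in $\{B(t)=x\}$. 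Therefore $\BP(B(t)\ge x)=\tfrac12\BP(\tau_x\le t)$, which is the reflection identity.
\item Finally, write $\BP(B^\ast(t)\le x)=1-2\,\BP(B(t)\ge x)$, which is continuous in $x$ because $\BP(B(t)=x)=0$. Differentiating yields $f_t(x)=\sqrt{2/(\pi t)}\,e^{-x^2/(2t)}$ for $x\ge0$.
\end{enumerate}

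\textbf{Main obstacle.} The only delicate point is step 3. There the restarted Brownian motion is evaluated at the random time $t-\sigma$, so one must justify "independence plus symmetry" at a random time rather than at a fixed one. I would handle this by first approximating $\sigma$ from above by stopping times taking finitely many dyadic values. For each fixed value, the independence of increments after $\sigma$ from $\SF_\sigma$ gives the factor $1/2$ directly. I would then pass to the limit using path continuity and $\BP(B(t)=x)=0$, which avoids needing a general measurable-conditioning argument.
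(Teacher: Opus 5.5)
Your reflection-principle argument is correct. Step 3 is already rigorous as written: $\wtilde B$ is independent of $\SF_\sigma$, so Fubini applies, and the dyadic approximation in your last paragraph is an optional alternative rather than a needed repair; the bound $\BP(\tau_x=t)\le\BP(B(t)=x)=0$ disposes of the boundary term. The paper does not prove this lemma itself but cites Rogers--Williams, where the density comes from exactly this reflection argument via the strong Markov property (the paper's Lemma \ref{lem:BM1}), so your proof is a self-contained version of the cited one.
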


In particular, we will be needing the following estimate.

\begin{corollary}\label{col:BM1}
Let $({B(t)})_{t \ge 0}$ be a Brownian motion on $(\Om,\SF,(\SF_t)_{t \ge 0},\BP)$ and $B^\ast(t)$ defined by \sef{eq:BM1}. For every $t > 0$ and $c \ge 0$ we have
\begin{equation}\label{eq:BM2}
\BP\gb{B^\ast(t) \ge c} \le \exp\left({-\frac{c^2}{2t}}\right).
\end{equation}
\end{corollary}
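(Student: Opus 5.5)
The estimate follows by integrating the density from Lemma \ref{lem:BM2} and bounding the Gaussian tail with a Chernoff-type argument.

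Fix $t>0$ and $c\ge 0$. By Lemma \ref{lem:BM2},
\begin{equation*}
\BP\gb{B^\ast(t)\ge c}=\int_c^\infty \sqrt{\frac{2}{\pi t}}\exp\left(-\frac{x^2}{2t}\right)\mathrm{d}x .
\end{equation*}
The substitution $x=c+y$ with $y\ge 0$ gives $x^2=c^2+2cy+y^2\ge c^2+y^2$, because $c\ge 0$. Hence the integrand is at most $\exp(-c^2/(2t))$ times $\sqrt{2/(\pi t)}\exp(-y^2/(2t))$. It follows that
\begin{equation*}
\BP\gb{B^\ast(t)\ge c}\le \exp\left(-\frac{c^2}{2t}\right)\int_0^\infty \sqrt{\frac{2}{\pi t}}\exp\left(-\frac{y^2}{2t}\right)\mathrm{d}y=\exp\left(-\frac{c^2}{2t}\right).
\end{equation*}
The last integral equals $1$ because it is the total mass of the density $f_t$ from Lemma \ref{lem:BM2}. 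Equivalently, it is twice the mass of a centred normal density with variance $t$ on $[0,\infty)$.

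There is no real obstacle here. The only point needing care is the inequality $2cy\ge 0$, which uses the hypothesis $c\ge 0$. For $c=0$ the bound reads $\BP\gb{B^\ast(t)\ge 0}\le 1$, which holds trivially.
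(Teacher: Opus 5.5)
Your proof is correct and is essentially the same as the paper's: shift the integration variable by $c$, drop the nonnegative cross term $2cy$ (this uses $c\ge 0$), factor out $\exp(-c^2/(2t))$, and note that the remaining integral is the total mass of $f_t$, which is $1$. One small wording point: your opening sentence calls this a ``Chernoff-type argument,'' but what you actually carry out is this direct shift-and-drop bound on the density, exactly as the paper does.
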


\begin{proof}
Due to Lemma \ref{lem:BM2}, we have
\begin{align}
\BP\gb{B^\ast(t) \ge c} \nonumber&= \int_c^\infty \sqrt{\frac{2}{\pi t}}\exp\left({-\frac{x^2}{2t}}\right)\mathrm dx\\
\nonumber&= \int_0^\infty \sqrt{\frac{2}{\pi t}}\exp\left({-\frac{(x+c)^2}{2t}}\right)\mathrm dx\\
\nonumber&= \int_0^\infty \sqrt{\frac{2}{\pi t}}\exp\left({-\frac{x^2 + 2xc + c^2}{2t}}\right)\mathrm dx\\
\nonumber&\le \int_0^\infty \sqrt{\frac{2}{\pi t}}\exp\left({-\frac{x^2}{2t}}\right)\mathrm dx\,\exp\left({-\frac{c^2}{2t}}\right)\\
&= \exp\left({-\frac{c^2}{2t}}\right),
\end{align}
which proves the inequality.
\end{proof}

\subsection{Time change}
Let $(\Om,\SF,(\SF_t)_{t \ge 0},\BP)$  be a filtered probability space satisfying the usual conditions. Let us consider time changes of stochastic integrals. Let $({W(t)})_{t \ge 0}$ be a Brownian motion on $(\Om,\SF,(\SF_t)_{t \ge 0},\BP)$ and suppose $(b(t))_{t \ge 0}$ is an adapted measurable process satisfying
\begin{equation}\label{eq:TC1}
\BE\,\int_0^t b(s)^2\,\mathrm ds < \infty\qquad\hbox{for all } t \ge 0.
\end{equation}
Assume that there exist positive constants $\be_0$ and $\be_1$ with
\begin{equation}\label{eq:TC2}
\be_0^2 \le b(s)^2 \le \be_1^2\qquad\hbox{a.s.\ for all } s \ge 0.
\end{equation}
The stochastic integral
\begin{equation}
M(t) = \int_0^t b(s)\,\mathrm  dW(s),\qquad t \ge 0,
\end{equation}
is then a continuous martingale with quadratic variation process (see \cite[IV.30.8]{RogWil94})
\begin{equation}\label{eq:TC3}
T(t) := [M]_t = \int_0^t b(s)^2\,\mathrm ds,\qquad t \ge 0.
\end{equation}
Note that $({T(t)})_{t \ge 0}$ is adapted, $T(0)=0$ a.s., $t \mapsto T(t)$ is strictly increasing and continuous a.s., and $T(t)$ goes to infinity a.s. as $t$ tends to infinity.

Let us proceed by defining $({S(s)})_{s \ge 0}$ to be the inverse of $({T(t)})_{t \ge 0}$, i.e.,
\begin{equation}T(S(t)) = t\quad\hbox{and}\quad S(T(t)) = t\qquad \hbox{a.s. for all } t \ge 0.\end{equation}
For each $s \ge 0$, $S(s)$ is a stopping time for $({\SF_t})_{t \ge 0}$, since
\begin{equation}\{ S(s) \le t\} = \{ s \le T(t) \} \in \SF_t\qquad\hbox{for every } t \ge 0,\end{equation}
as $({T(t)})_{t \ge 0}$ is adapted.
Define
\begin{equation}\label{eq:TC4}
\SG_s := \SF_{S(s)},\qquad s \ge 0.
\end{equation}
Then $({\SG_s})_{s \ge 0}$ is a filtration on $(\Om,\SF,\BP)$, again satisfying the usual conditions.

The following time change result plays a crucial role in our estimates and is a fundamental result in stochastic calculus; see, e.g., \cite[IV.34.1]{RogWil94} or \cite[Thm. 4.6] {book:karatzas}.

\begin{theorem}[Dambis--Dubins--Schwarz]\label{thm:TC1} Consider the setting above  and define
\begin{equation}B(t) := \int_0^{S(t)} b(s)\,\mathrm dW(s),\qquad t \ge 0.\end{equation}
Then $({B(t)})_{t \ge 0}$ is a Brownian motion on $(\Om,\SF,\gb{\SG_t}_{t \ge 0}, \BP)$ and
\begin{equation}\int_0^t b(s)\,\mathrm dW(s) = B(T(t))\qquad \hbox{a.s. for all } t \ge 0.\end{equation}
\end{theorem}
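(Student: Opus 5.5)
The natural route is Lévy's characterization of Brownian motion, applied to the time-changed process with respect to the time-changed filtration $(\SG_t)_{t\ge 0}$. The plan has three steps, and the first two only use the bounds \eqref{eq:TC2}.

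First, record the properties of the time change. By \eqref{eq:TC2}, $\be_0^2 t\le T(t)\le \be_1^2 t$, so $T$ is a continuous, strictly increasing bijection of $[0,\infty)$. Its inverse $S$ is therefore continuous and strictly increasing, and satisfies $s/\be_1^2\le S(s)\le s/\be_0^2$. Hence each $S(s)$ is a \emph{bounded} stopping time. By the remark in Section \ref{sec:filtration}, $(\SG_t)$ is a filtration satisfying the usual conditions. Since $M$ is continuous and adapted, $B(t)=M(S(t))$ is $\SG_t$-measurable, and $t\mapsto B(t)$ is continuous a.s. with $B(0)=0$.

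Second, I will show that $B$ and $B(t)^2-t$ are $(\SG_t)$-martingales. Condition \eqref{eq:TC1} makes $M$ an $L^2$-martingale, and $M(t)^2-T(t)$ is a martingale by \eqref{eq:TC3}. For $s\le t$ we have $S(s)\le S(t)\le t/\be_0^2$, so the optional sampling theorem for bounded stopping times gives
\begin{equation*}
\BE[M(S(t))\mid\SF_{S(s)}]=M(S(s)),\qquad \BE[M(S(t))^2-T(S(t))\mid \SF_{S(s)}]=M(S(s))^2-T(S(s)).
\end{equation*}
Since $T(S(t))=t$, this says exactly that $B$ and $B^2-t$ are $(\SG_t)$-martingales. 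Integrability follows from Itô's isometry: $\BE B(t)^2\le \BE\int_0^{t/\be_0^2} b^2\,\mathrm ds<\infty$. The point to be careful about is that optional sampling applies to the whole family $(S(t))$ simultaneously; the uniform upper bound on $S(t)$ makes this routine.

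Third, Lévy's characterization (a continuous local martingale $B$ with $B(0)=0$ and $[B]_t=t$ is a Brownian motion with respect to the same filtration) shows that $B$ is a Brownian motion on $(\Om,\SF,(\SG_t),\BP)$. This includes independence of increments from $\SG_{t_1}$, which is the form of the definition used in the paper. The identity $\int_0^t b\,\mathrm dW=M(t)=M(S(T(t)))=B(T(t))$ then holds a.s. for all $t$, using $S(T(t))=t$ and the fact that it holds simultaneously for all $t$ on the full-measure set where $T$ is a continuous bijection.

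The main obstacle is conceptual rather than computational. One must justify that $T$ is actually $[M]$ and that the martingale property survives the time change. Both are handled by the boundedness assumptions \eqref{eq:TC1} and \eqref{eq:TC2}. Lévy's theorem itself is quoted rather than reproved.
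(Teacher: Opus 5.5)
Your argument is correct. It is the standard proof, namely optional sampling at the time-changed stopping times followed by L\'evy's characterization, as given in the references the paper cites for this theorem, \cite[IV.34.1]{RogWil94} and \cite[Thm.~4.6]{book:karatzas}; the paper itself offers no proof, and your use of \eqref{eq:TC2} to make $T$ a bijection with each $S(t)\le t/\be_0^2$ bounded legitimately removes the localization and constancy-interval arguments needed there for general continuous local martingales.
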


We also need the following lemma of which a proof can be found in \cite[IV.30]{RogWil94}.

\begin{lemma}\label{lem:TC1}
Let $({S(s)})_{s \ge 0}$ be the inverse of $({T(t)})_{t \ge 0}$ given by \sef{eq:TC3}.
\begin{enumerate}
\item[\textup{(a)}] If $\tau$ is a stopping time for $({\SF_t})_{t \ge 0}$, then $T(\tau)$ is a stopping time for $({\SG_s})_{s \ge 0}$.

\item[\textup{(b)}]  If $\tau$ is a stopping time for $({\SG_s})_{s \ge 0}$, then $S(\tau)$ is a stopping time for $({\SF_t})_{t \ge 0}$.
\end{enumerate}
\end{lemma}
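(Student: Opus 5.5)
The plan for both parts is to use two facts: $T$ and $S$ are a.s.\ continuous, strictly increasing and mutually inverse, and for stopping times $\sigma,\rho$ of a filtration satisfying the usual conditions we have $\{\sigma\le\rho\}\in\mathcal F_\rho$, which is a standard fact. The a.s.\ exceptional sets are handled by the completeness of $\mathcal F_0$, which is contained in every $\mathcal F_t$ and every $\mathcal G_s$. Throughout we use the conventions $T(\infty)=\infty$ and $S(\infty)=\infty$.

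For part (a), let $\tau$ be an $(\mathcal F_t)$-stopping time and fix $s\ge 0$. Because $T$ is strictly increasing and continuous with inverse $S$, we have a.s.
\begin{equation}
\{T(\tau)\le s\}=\{\tau\le S(s)\}.
\end{equation}
Here $S(s)$ is an $(\mathcal F_t)$-stopping time, as shown just before \eqref{eq:TC4}. The comparison fact then gives $\{\tau\le S(s)\}\in\mathcal F_{S(s)}=\mathcal G_s$. So $T(\tau)$ is a $(\mathcal G_s)$-stopping time.

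For part (b), let $\tau$ be a $(\mathcal G_s)$-stopping time and fix $t\ge0$. Again a.s.
\begin{equation}
\{S(\tau)\le t\}=\{\tau\le T(t)\}.
\end{equation}

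\emph{Step 1.} Show that $T(t)$ is a $(\mathcal G_s)$-stopping time. We have $\{T(t)\le s\}=\{t\le S(s)\}$. The deterministic time $t$ and $S(s)$ are both $(\mathcal F_t)$-stopping times, so by the comparison fact this event lies in $\mathcal F_{S(s)}=\mathcal G_s$.

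\emph{Step 2.} Apply the comparison fact in the filtration $(\mathcal G_s)$. This gives $\{\tau\le T(t)\}\in\mathcal G_{T(t)}$.

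\emph{Step 3.} It remains to show $\mathcal G_{T(t)}\subseteq\mathcal F_t$. I expect this inclusion, formally $\mathcal F_{S(T(t))}=\mathcal F_t$ for the composed filtration, to be the main obstacle.

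Take $A\in\mathcal G_{T(t)}$, so $A\cap\{T(t)\le s\}\in\mathcal F_{S(s)}$ for every $s$. Fix $\varepsilon>0$ and a rational $s$. Since $\{T(t)\le s\}=\{S(s)\ge t\}$, the definition of $\mathcal F_{S(s)}$ yields
\begin{equation}
A\cap\{t\le S(s)\le t+\varepsilon\}\in\mathcal F_{t+\varepsilon}.
\end{equation}
By continuity and strict monotonicity of $S$, with $S(T(t))=t$, for a.e.\ $\omega$ some rational $s$ has $S(s)(\omega)\in[t,t+\varepsilon]$. Hence the union over rational $s$ of these sets equals $A$ up to a null set. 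This gives $A\in\mathcal F_{t+\varepsilon}$ for every $\varepsilon>0$. Right-continuity of $(\mathcal F_t)$ then gives $A\in\mathcal F_t$.

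Combining Steps 2 and 3, $\{S(\tau)\le t\}\in\mathcal F_t$, so $S(\tau)$ is an $(\mathcal F_t)$-stopping time.
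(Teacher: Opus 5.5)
Your proof is correct. The paper gives no argument of its own for this lemma and refers to \cite[IV.30]{RogWil94}. The only related reasoning it writes out is in the proof of the first reverse-time supremum estimate of Section~\ref{sec:estims}, namely $\{T(k-1)\le u\}=\{k-1\le S(u)\}\in\mathcal F_{S(u)}$, and that is exactly your part (a) and Step~1.

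For part (b), a common route is different:
\begin{itemize}
\item approximate $\tau$ from above by the dyadic $(\mathcal G_s)$-stopping times $\tau_n=2^{-n}(\lfloor 2^n\tau\rfloor+1)$;
\item note that $\{S(\tau_n)\le t\}=\bigcup_k\{\tau_n=k2^{-n}\}\cap\{S(k2^{-n})\le t\}\in\mathcal F_t$, because $\{\tau_n=k2^{-n}\}\in\mathcal G_{k2^{-n}}=\mathcal F_{S(k2^{-n})}$;
\item let $n\to\infty$, using $S(\tau_n)\downarrow S(\tau)$ and right-continuity of $(\mathcal F_t)$.
\end{itemize}

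You instead combine the comparison lemma in $(\mathcal G_s)$ with the inclusion $\mathcal G_{T(t)}\subseteq\mathcal F_t$. Right-continuity of $(\mathcal F_t)$ then enters only once, in Step~3. Your countable-union argument there is sound, because a.s.\ $\{s\colon S(s)\in[t,t+\varepsilon]\}=[T(t),T(t+\varepsilon)]$ is a nondegenerate interval.

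Your route is shorter in this setting and gives $\mathcal G_{T(t)}\subseteq\mathcal F_t$ as a by-product. However, it uses that $T$ and $S$ are mutually inverse bijections, i.e.\ that $T$ is strictly increasing, which holds here because $b^2\ge\beta_0^2$. The approximation argument needs only right-continuity of $S$. It therefore also covers time changes in which $T$ has flat stretches and $S$ jumps.
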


\subsection{Martingales}

In this subsection, we collect some basic properties of continuous martingales  that are used in the sequel. The next two lemmas can be found in \cite[II.70.1]{RogWil94} and \cite[IV.37.11]{RogWil94}, respectively.

\begin{lemma}\label{lem:M1}
% { \rm (Doob \cite[II.70.1]{RogWil94}.)} 
Let $\gb{Z(t)}_{t \ge 0}$ be a non-negative continuous martingale and $c > 0$. For any $t \ge 0$, we have
\begin{equation}\BP\left(\sup_{0 \le \th \le t} Z(\th) \ge c \right) \le \frac{1}{c}\,\BE Z(0).\end{equation}
\end{lemma}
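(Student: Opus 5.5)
The plan is to reduce this maximal inequality to an optional stopping argument at the first hitting time of level $c$. Fix $t\ge 0$ and $c>0$, and define
\begin{equation*}
\tau:=\inf\{\th\ge 0\colon Z(\th)\ge c\}\wedge t,
\end{equation*}
which is a bounded stopping time because $Z$ is continuous and adapted and the filtration satisfies the usual conditions. Since the paths are continuous, on the event $A:=\{\sup_{0\le\th\le t} Z(\th)\ge c\}$ the level $c$ is attained at some time in $[0,t]$, so $Z(\tau)=c$ on $A$ (or $Z(0)\ge c$ already, in which case $\tau=0$ and $Z(\tau)\ge c$). In every case $Z(\tau)\ge c$ on $A$.

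Next I would apply the optional stopping theorem for the bounded stopping time $\tau\le t$; the martingale $Z$ is uniformly integrable on $[0,t]$, being closed by $Z(t)$. This gives $\BE Z(\tau)=\BE Z(0)$. Non-negativity of $Z$ then yields
\begin{equation*}
\BE Z(0)=\BE Z(\tau)\ge \BE\bigl[Z(\tau)\mathbbm 1_A\bigr]\ge c\,\BP(A),
\end{equation*}
and dividing by $c$ gives the claim.

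The only step that needs any care is the justification of optional stopping in continuous time for a bounded stopping time, and the measurability of the hitting time. For the latter one can use a closed level set, which gives a stopping time directly because the paths are continuous. For the former, the cleanest route is to approximate $\tau$ from above by dyadic stopping times $\tau_n\downarrow\tau$ taking finitely many values in $[0,t]$. The discrete optional stopping theorem then gives $\BE Z(\tau_n)=\BE Z(0)$. Right-continuity gives $Z(\tau_n)\to Z(\tau)$, and the family $\{Z(\tau_n)\}$ is uniformly integrable since $Z(\tau_n)=\BE[Z(t)\mid\SF_{\tau_n}]$, so we may pass to the limit. Alternatively, one may simply cite the standard optional sampling theorem for continuous martingales.
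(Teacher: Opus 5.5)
Your proof is correct. The paper does not actually prove this lemma; it quotes Doob's maximal inequality from Rogers--Williams [II.70.1]. So your proposal is a self-contained proof of a cited result rather than an alternative to an argument in the text.

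The usual textbook proof of the continuous-time statement avoids continuous-time optional sampling altogether. One applies the discrete Doob inequality $c'\,\BP(\max_{s\in F} Z(s)\ge c')\le \BE Z(t)=\BE Z(0)$ on finite sets $F\subseteq[0,t]$ containing $t$. Letting $F$ increase to a dense subset, continuity makes $\{\sup_{0\le\th\le t}Z(\th)>c'\}$ an increasing union of the discrete events. Finally one lets $c'\uparrow c$ to recover the non-strict event $\{\sup\ge c\}$. Your hitting-time argument moves the technical work into the optional stopping theorem at the bounded time $\tau$. In exchange it treats $\{\sup\ge c\}$ directly, because the level set $\{Z\ge c\}$ is closed and the supremum over $[0,t]$ is attained.

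One simplification is available in your route. Since $Z\ge 0$, you only need $\BE Z(\tau)\le \BE Z(0)$, not equality. This follows from $\BE Z(\tau_n)=\BE Z(0)$, $Z(\tau_n)\to Z(\tau)$ and Fatou's lemma. So the uniform-integrability step can be dropped, and the argument then works verbatim for non-negative right-continuous supermartingales.
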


\begin{lemma}\label{col:M3}
% {\rm \cite[IV.37.11]{RogWil94}.} 
Let $(b(t))_{t \ge 0}$ be an adapted measurable process such that
\begin{equation}\BE\,\int_0^t b(s)^2\,\mathrm ds < \infty\qquad \hbox{for every } t \ge 0.\end{equation}
The process $\gb{M(t)}_{t \ge 0}$ defined by
\begin{equation}\label{eq:M1}
M(t) = e^{\int_0^t b(s)\,\mathrm dW(s) - \frac{1}{2}\int_0^t b(s)^2\,\mathrm ds},\qquad t \ge 0,
\end{equation}
is a continuous martingale.
\end{lemma}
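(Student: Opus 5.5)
The plan is to prove the statement in two stages. First, $M$ is a nonnegative continuous local martingale, and hence a supermartingale. Second, $\BE M(t)=1$ for every $t$, which upgrades the supermartingale to a martingale. The second stage is where all the difficulty lies.

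\emph{Stage one.} Put $X(t)=\int_0^t b(s)\,\mathrm dW(s)-\frac12\int_0^t b(s)^2\,\mathrm ds$. The hypothesis $\BE\int_0^t b(s)^2\,\mathrm ds<\infty$ makes the stochastic integral a well-defined continuous (square-integrable) martingale, so $X$ is a continuous semimartingale. Applying It\^o's formula to $e^{X}$ gives
\begin{equation*}
M(t)=1+\int_0^t M(s)b(s)\,\mathrm dW(s),
\end{equation*}
since the drift $-\frac12 b^2$ cancels the It\^o correction $\frac12 M b^2$. Continuity of $M$ is immediate from continuity of $X$. Localize with $\si_n=\inf\{t: M(t)\ge n\}$. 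The stopped integrand $M b\mathbbm 1_{[0,\si_n]}$ is bounded by $n|b|$, so it is square integrable, and $M(\cdot\wedge\si_n)$ is a true martingale. Because $M\ge 0$, Fatou's lemma applied to $M(t\wedge\si_n)\to M(t)$ yields the supermartingale inequality $\BE[M(t)\mid\SF_s]\le M(s)$. In particular $\BE M(t)\le 1$.

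\emph{Stage two.} A nonnegative supermartingale with constant expectation is a martingale, so it suffices to show $\BE M(t)\ge 1$. I would do this by truncating $b$. Set $b_n=b\mathbbm 1_{\{|b|\le n\}}$ and let $M_n$ be the exponential built from $b_n$. Since $|b_n|\le n$, Novikov's criterion $\BE e^{\frac12\int_0^t b_n^2\,\mathrm ds}\le e^{n^2t/2}<\infty$ holds, so $M_n$ is a martingale with $\BE M_n(t)=1$. By the It\^o isometry and dominated convergence, which is exactly where the hypothesis $\BE\int_0^t b^2\,\mathrm ds<\infty$ enters, we have $\int_0^t b_n\,\mathrm dW\to\int_0^t b\,\mathrm dW$ in $L^2$. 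Likewise $\int_0^t b_n^2\,\mathrm ds\to\int_0^t b^2\,\mathrm ds$ in $L^1$. Hence $M_n(t)\to M(t)$ in probability. If the family $(M_n(t))_n$ is uniformly integrable, Vitali's theorem gives $\BE M(t)=\lim_n\BE M_n(t)=1$.

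The main obstacle is the uniform integrability of $(M_n(t))_n$, and I do not expect the bare $L^2$ hypothesis to deliver it. Bounding $M_n(t)$ in $L^p$ for some $p>1$ would suffice. Writing $M_n^p=\exp\bigl(p\int b_n\,\mathrm dW-\frac{p^2}{2}\int b_n^2\,\mathrm ds\bigr)\exp\bigl(\frac{p^2-p}{2}\int b_n^2\,\mathrm ds\bigr)$ and applying Cauchy--Schwarz reduces this to finiteness of an exponential moment $\BE\exp\bigl(c\int_0^t b^2\,\mathrm ds\bigr)$ for some $c>0$. That is a Novikov-type condition, which second moments of $b$ alone do not control. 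The genuinely available hypothesis is therefore the bounded one used in every application in the paper, namely \eqref{eq:TC2}, $b(s)^2\le\be_1^2$. Under it $\int_0^t b^2\,\mathrm ds\le\be_1^2t$, the exponential moments are trivially finite, the $L^2$-bound $\BE M_n(t)^2\le e^{\be_1^2t}$ holds uniformly in $n$, and the argument closes; in fact one can then skip the truncation, since Novikov's criterion applies to $b$ directly. For the lemma as worded, with only $\BE\int_0^t b^2\,\mathrm ds<\infty$, stages one and two go through verbatim except for this uniform-integrability step. That step is precisely what must be supplied from the cited result \cite[IV.37.11]{RogWil94}, and I would check there which integrability condition it actually imposes.
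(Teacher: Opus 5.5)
Your stage one is correct and complete. Under the stated hypothesis, $M$ is a nonnegative continuous local martingale, hence a supermartingale with $\BE M(t)\le 1$.

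Your doubt about stage two is justified, but the conclusion is stronger than ``check the reference''. No result can supply the uniform-integrability step, because the lemma as worded is false. The paper itself gives no argument, only the citation to Rogers--Williams. A standard counterexample: take a space carrying a three-dimensional Brownian motion $B$ with $|B(0)|=1$. Put $R=|B|$ and $W=\int_0^\cdot R(s)^{-1}B(s)\cdot\mathrm dB(s)$. By L\'evy's characterization $W$ is a standard Brownian motion, and It\^o's formula gives $\mathrm dR=\mathrm dW+R^{-1}\,\mathrm dt$. Now take $b=-1/R$. Then
\begin{equation*}
\mathrm d(1/R)=-R^{-2}\,\mathrm dW=(1/R)\,b\,\mathrm dW,
\end{equation*}
so $M=1/R$. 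The quantity $\BE R(s)^{-2}$ is bounded in $s\ge 0$: the function $|y|^{-2}$ is integrable near the origin of $\BR^3$, and $B(0)$ sits at distance $1$ from the origin. Hence $\BE\int_0^t b(s)^2\,\mathrm ds<\infty$. Yet $\BE M(t)=\BP(|N|\le 1/\sqrt t)<1$ for every $t>0$, with $N$ standard normal. So $M$ is a strict local martingale, and the missing step genuinely fails.

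What is true is exactly what you isolated. In general $M$ is a supermartingale. It is a martingale under a Novikov-type condition, in particular when $b(s)^2\le\be^2$. In that case Novikov applies directly, or your truncation argument closes with $\BE M_n(t)^2\le e^{\be^2 t}$.

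One correction to your remark that every application uses bounded $b$. The lemma is invoked only in the proof of the first lemma of Section~\ref{sec:estims}, and there $b$ is \emph{not} assumed bounded. That use is nevertheless harmless, because the exponential process $Z$ is only fed into Lemma~\ref{lem:M1}. The bound $c\,\BP(\sup_{0\le\th\le t}Z(\th)\ge c)\le\BE Z(0)$ holds verbatim for nonnegative continuous supermartingales; it follows from optional stopping at $\inf\{\th: Z(\th)\ge c\}\wedge t$. So your stage one is precisely what the paper needs. The lemma should be restated either with ``supermartingale'' in the conclusion, or with a boundedness hypothesis such as \eqref{eq:TC2}.
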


Below we state    important inequalities that will be used extensively in this paper.  The power of the Burkholder--Davis--Gundy inequality lies in the   values other than $p\neq 2.$  Indeed,  $p=2$ is a special case with  $c_2=1$ and $C_2=4$,  following quite directly from    Doob's maximal inequality. 

\begin{lemma}\label{lem:stochineq}
Let $(M(t))_{t\ge 0}$ be a continuous martingale on $(\Omega,\mathcal{F},(\mathcal{F}_t)_{t},\BP)$.
\begin{enumerate}
\item[\textup{(a)}] \textup{(Doob's maximal inequality.)} For any $p,q>1$ with $\frac{1}{p}+\frac{1}{q}=1$, we have
\begin{equation}\left(\BE \sup_{0\le s\le t} |M(s)|^p\right)^{1/p} \le q  \Bigl( \BE\,|M(t)|^p\Bigr)^{1/p}.\end{equation}
\item[\textup{(b)}] If $(M(t))_{t\ge 0}$ is bounded above in probability, then 
\begin{equation}\gb{\sup_{0\le s\le t} M(s)}_{t\ge 0}\end{equation}
is bounded above in probability.
\item[\textup{(c)}] \textup{(Burkholder--Davis--Gundy inequality.)} For every $p\geq 1$, we have
\begin{equation}c_{p} \mathbb{E}\left[[M]_t^{p / 2}\right] \leq  \mathbb E\left[\sup _{0 \leq s \leq t} |M(s)|^p\right] \leq C_{p} \mathbb{E}\left[[M]_{t}^{p / 2}\right],\quad t\geq 0,\end{equation}
where $([M]_t)_{t\geq 0}$ denotes the quadratic variation process of $M$ and with $c_p$ and $C_p$   constants independent of $M$ and $(\Omega,\mathcal{F},(\mathcal{F}_t)_{t\geq 0},\BP)$.
% \todo{Niet beter om gewoon de BDG voor sup over interval $[0,t]$ zodat je niet nodig hebt dat de quadratic variation een limiet heeft?}
\item[\textup{(d)}] For $p\ge 1$, any standard Brownian motion on $(\Omega,\mathcal{F},(\mathcal{F}_t)_{t\geq 0},\BP)$, $0\le t_0\le t_1 $, and any locally bounded adapted process $(X(t))_{t_0\le t\le t_1}$ one has
\begin{equation}\BE\left| \int_{t_0}^{t_1} X(s)\,\mathrm dW(s)\right|^p \le C_p \BE \left( \int_{t_0}^{t_1} X(s)^2\, \mathrm ds\right)^{p/2},\end{equation}
where $C_p$ is as in \textnormal{(c)}.
\end{enumerate}
\end{lemma}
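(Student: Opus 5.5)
The four parts are largely classical, so the plan is to derive (a) and (d) directly, cite (c), and spend the real effort on (b).

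For (a) we use that $|M|$ is a non-negative continuous submartingale, by Jensen's inequality. First we prove the weak-type bound $\lambda\,\BP(\sup_{0\le s\le t}|M(s)|\ge\lambda)\le \BE\bigl[|M(t)|\mathbbm 1_{\{\sup_{s\le t}|M(s)|\ge\lambda\}}\bigr]$. For this, introduce the stopping time $\rho=\inf\{s:|M(s)|\ge\lambda\}\wedge t$ and apply optional stopping to the submartingale at the bounded times $\rho\le t$; continuity of paths gives $|M(\rho)|=\lambda$ on the relevant event. Next, multiply by $p\lambda^{p-2}$, integrate over $\lambda\in(0,\infty)$ and use Fubini (the layer-cake formula) to get $\BE (M^\ast)^p\le q\,\BE\bigl[|M(t)|(M^\ast)^{p-1}\bigr]$, where $M^\ast=\sup_{s\le t}|M(s)|$. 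Hölder's inequality with exponents $p,q$ then gives the claim after dividing by $(\BE (M^\ast)^p)^{1/q}$. To make this division legitimate, we first replace $M^\ast$ by $M^\ast\wedge n$ and let $n\to\infty$ by monotone convergence.

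For (c) we would simply cite the standard Burkholder--Davis--Gundy inequality (e.g.\ \cite{book:revuz} or \cite{book:karatzas}). If a proof were wanted, for $p\ge2$ one applies Itô's formula to $|M|^p$, takes expectations after localisation, and combines Doob (a) with Hölder. For general $p$ one uses Lenglart's domination or the good-$\lambda$ inequality comparing $M^\ast$ with $[M]^{1/2}$. Part (d) then follows from (c). Localise with $\rho_n=\inf\{s\ge t_0:|X(s)|\ge n\}$, set $N(t)=\int_{t_0}^{t\wedge\rho_n}X(s)\,\mathrm dW(s)$, and note that $[N]_{t_1}=\int_{t_0}^{t_1\wedge\rho_n}X(s)^2\,\mathrm ds$. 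Then use $|N(t_1)|^p\le\sup_{s\le t_1}|N(s)|^p$ together with the right-hand inequality in (c), and let $n\to\infty$ using Fatou's lemma on the left and monotone convergence on the right.

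Part (b) is the step I expect to be the real obstacle, because it is not a routine consequence of the martingale property. The plan is to use the Dambis--Dubins--Schwarz representation $M(t)=M(0)+B([M]_t)$ from Theorem~\ref{thm:TC1}, which reduces statements about $\sup_{s\le t}M(s)$ to the running maximum of a Brownian motion at the random time $[M]_t$. Fix $R$ and let $\sigma=\inf\{s:M(s)\ge 2R\}$. On $\{\sigma\le t\}$ we have $M(\sigma)=2R$, and we want a lower bound of the form $\BP(M(t)\ge R\mid \SF_\sigma)\ge c>0$ on $\{\sigma\le t\}$. Granted this, $\BP(\sup_{s\le t}M(s)\ge 2R)\le c^{-1}\BP(M(t)\ge R)$, which is small uniformly in $t$ by the hypothesis. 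After the time change the required lower bound is a statement about Brownian motion restarted at $\sigma$ (Lemma~\ref{lem:BM1}). However, it only holds if the remaining clock $[M]_t-[M]_\sigma$ cannot concentrate the process below $R$. In full generality this is delicate: a continuous martingale can make rare, large excursions upward while sitting low most of the time. I would therefore first check whether the statement needs the extra structure present in the paper's applications, namely quadratic variation densities bounded above and below as in \eqref{eq:TC2}. If the general claim resists, I would prove it under that assumption, or else locate the precise formulation in \cite[IV]{RogWil94} that the authors rely on.
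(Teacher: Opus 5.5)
Your treatment of (a), (c) and (d) is fine and matches the paper. (c) is cited as there. (a) is the standard weak-type/layer-cake/H\"older argument behind the reference the paper gives. (d) is deduced from (c) as in the paper, and your localisation is if anything more careful, since $\int X\,\mathrm dW$ is a priori only a local martingale.

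The gap is (b): you do not prove it, and no argument can, because (b) is false. Let $M$ solve $\mathrm dM=\sqrt{1+M^2}\,\mathrm dW$, $M(0)=0$. The coefficient is globally Lipschitz, so $\BE\sup_{s\le t}M(s)^2<\infty$ and $M$ is a continuous $L^2$-martingale. Since $[M]_t=\int_0^t(1+M(s)^2)\,\mathrm ds\ge t$, the Dambis--Dubins--Schwarz theorem (in its general form) gives $\sup_{s\ge 0}M(s)=\sup_{u\ge 0}B(u)=+\infty$ a.s. Hence $\BP(\sup_{s\le t}M(s)\ge R)\to 1$ as $t\to\infty$ for every $R$. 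On the other hand, $Y=\operatorname{arsinh}M$ solves $\mathrm dY=-\tfrac12\tanh(Y)\,\mathrm dt+\mathrm dW$. This is a positive recurrent diffusion with invariant density $(\pi\cosh y)^{-1}$, so $M(t)=\sinh Y(t)$ converges in law to the standard Cauchy distribution. Together with path continuity on bounded time intervals, $(M(t))_{t\ge 0}$ is tight, in particular bounded above in probability.

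This is exactly the scenario you worried about: the clock $\mathrm d[M]_t=(1+M(t)^2)\,\mathrm dt$ runs fast at high levels, so high excursions occupy little time. It also shows concretely why your conditional bound $\BP(M(t)\ge R\mid \SF_\sigma)\ge c$ on $\{\sigma\le t\}$ fails. By the strong Markov property this conditional probability tends to the Cauchy tail, roughly $1/(\pi R)$, as $t-\sigma\to\infty$, so $c$ cannot be chosen independently of $R$.

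Your fallback under \eqref{eq:TC2} does not rescue anything either. There $M(t)=B(T(t))$ with $T(t)\in[\beta_0^2t,\beta_1^2t]$, and Brownian scaling gives $\liminf_{t\to\infty}\BP(M(t)\ge R)\ge \BP(\inf_{u\in[\beta_0^2,\beta_1^2]}B(u)>0)>0$ for every $R$. So the hypothesis of (b) is never satisfied and the statement is vacuous.

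The paper's own proof of (b) is also invalid. It applies Doob's $L^2$ inequality (a) to $s\mapsto\mathbbm 1_{[R,\infty)}(M(s))$, which is not a (sub)martingale. The estimate it produces, $\BP(\sup_{s\le t}M(s)\ge R)\le 4\sup_{s\le t}\BP(M(s)\ge R)$, is contradicted by the example above.

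A correct variant needs an extra hypothesis such as $\sup_{t}\BE|M(t)|<\infty$ (for instance, $M$ bounded below). In that case the weak-type inequality you prove on the way to (a) gives directly $\BP(\sup_{s\le t}|M(s)|\ge R)\le R^{-1}\sup_t\BE|M(t)|$. As far as I can see, only (a) is used later in the paper (in Lemma \ref{lem:EsMainSegment}), so (b) should be dropped or amended in this way rather than proved.
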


\begin{proof}
For (a) and (c), we refer to \cite[Thm. I.20]{book:protter} and \cite[Thm. 26.12]{book:kallenberg}, respectively. For (b), let $R>0$ and   use property (a) to obtain
\begin{align}
\BP\, \gb{\sup_{0\le s\le t} M(s)\ge R} &= \BE\, \mathbbm{1}_{(R,\infty)}\Bigl(\sup_{0\le s\le t} M(s)\Bigr)^2\nonumber\\
\nonumber &\le 4 \sup_{0\le s\le t} \BE\, \mathbbm1_{[R,\infty)}(M(s))^2\\
&= 4 \sup_{0\le s\le t} \BP\gb{M(s)\ge R},\qquad\mbox{for all } t\ge 0.
\end{align}
So if $(M(t))_{t\ge 0}$ is bounded above in probability, then so is $(\sup_{0\le s\le t} M(s))_{t\ge 0}.$
Finally, for (d),  let $X'(t)=X(t)$ for $t\in [t_0,t_1]$ and $X'(t)=0$ for $t\in[0,T]\setminus [t_0,t_1]$, because then (c) yields
\begin{align}
\nonumber \BE \left| \int_{t_0}^{t_1} X(s)\,\mathrm  dW(s)\right|^p &\le \BE \left(\sup_{0\leq t\leq T} \left| \int_0^t X'(s)\,\mathrm  dW(s) \right|\right)^p\\
% \nonumber&\le c_p \BE \left[\int_0^\cdot X'(s)\,\mathrm  dW(s)\right] _\infty^{p/2}\\
\nonumber&\leq C_p \BE\,\left( \int_0^T X'(s)^2\, \mathrm ds\right)^{p/2}\\
&\le C_p \BE \left( \int_{t_0}^{t_1} X(s)^2\, \mathrm ds\right)^{p/2},
\end{align}
proving the final assertion.
\end{proof}

\section{Estimates for Itô processes with negative drift}\label{sec:estims}
\setcounter{equation}{0}
Let $(\Om,\SF,(\SF_t)_{t \ge 0},\BP)$ be a filtered probability space satisfying the usual conditions together with a Brownian motion $W=({W(t)})_{t \ge 0}$. 
We provide some estimates for the Itô process
\begin{equation}\label{eq:EsItô2}
Y(t) = -\int_0^t a(s)\,\mathrm ds + \int_0^t b(s)\,\mathrm dW(s),\qquad t \ge 0,
\end{equation}
 where $(a(t))_{t \geq 0}$ and $(b(t))_{t \geq 0}$ are measurable and adapted processes such that
 \begin{equation}
 \dint {0} {t}{|a(s)|}   s<\infty \mbox{ a.s.}     \quad\text{and}\quad  \BE\dint {0} {t}{b(s)^{2}}  s<\infty,  
 \end{equation}
 for all $t \geq 0$. In this setting, the It\^o process in \eqref{eq:EsItô2} is well-defined and its second term is a continuous martingale satisfying the It\^o isometry
 \begin{equation}
\mathbb{E} \left( \int_0^t b(s)\,\mathrm{d}W(s)\right)^2 =\mathbb{E} \dint {0} {t}{|b(s)|^2}   s    \qquad\text{for all }t\geq 0. 
 \end{equation}
 
The first two estimates rely on standard techniques and serve to illustrate how negative drift leads to exponential bounds on tail probabilities.

\begin{lemma}\label{lem:EsItô1}%E1
	Let $Y=(Y(t))_{t\geq 0}$ be a stochastic process given by \eqref{eq:EsItô2}. 
If there exists a constant $\be > 0$ such that we have
\begin{equation}\label{eq:EsItô3}
\int_0^t a(s)\,\mathrm ds \ge \be \int_0^t b(s)^2\,\mathrm ds\qquad \mbox{for all }t\ge 0\mbox{ a.s.},
\end{equation}
then for every $t \ge 0$ and $R \in \BR$ we have
\begin{equation}%\label{eq:EsItô4}
\BP\left(\sup_{0 \le \th \le t} Y(\th) \ge R\right) \le \exp\gb{-2\be R}.
\end{equation}
\end{lemma}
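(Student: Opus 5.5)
The plan is to use the exponential martingale of Lemma \ref{col:M3} with parameter $2\be$, combined with Doob's inequality for non-negative martingales (Lemma \ref{lem:M1}). Set $\lambda := 2\be$ and define
\begin{equation*}
Z(t) := \exp\Bigl(\lambda\int_0^t b(s)\,\mathrm dW(s) - \frac{\lambda^2}{2}\int_0^t b(s)^2\,\mathrm ds\Bigr),\qquad t\ge 0.
\end{equation*}
Applying Lemma \ref{col:M3} to the process $\lambda b$, which satisfies the required square-integrability, shows that $Z$ is a non-negative continuous martingale with $Z(0)=1$.

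Next compare $e^{\lambda Y}$ with $Z$ pathwise. By \eqref{eq:EsItô3}, almost surely for all $t\ge 0$,
\begin{equation*}
\lambda Y(t) = -\lambda\int_0^t a(s)\,\mathrm ds + \lambda\int_0^t b(s)\,\mathrm dW(s) \le -2\be^2\int_0^t b(s)^2\,\mathrm ds + \lambda\int_0^t b(s)\,\mathrm dW(s).
\end{equation*}
Since $\lambda^2/2 = 2\be^2$, the right-hand side is exactly $\log Z(t)$. Hence $e^{\lambda Y(t)}\le Z(t)$ for all $t\ge 0$ a.s. This holds simultaneously for all $t$, because \eqref{eq:EsItô3} is assumed for all $t$ on a single full-measure set and both sides are continuous.

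For $R>0$ we then get
\begin{equation*}
\BP\Bigl(\sup_{0\le\th\le t} Y(\th)\ge R\Bigr)\le \BP\Bigl(\sup_{0\le\th\le t} Z(\th)\ge e^{2\be R}\Bigr)\le e^{-2\be R}\,\BE Z(0)= e^{-2\be R},
\end{equation*}
where the second inequality is Lemma \ref{lem:M1}. For $R\le 0$ the right-hand side $e^{-2\be R}$ is at least $1$, so the bound holds trivially.

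I expect no real obstacle. The main point to check is that the choice $\lambda=2\be$ is precisely the one that makes the drift inequality absorb the compensator $\tfrac12\lambda^2\int b^2$ exactly. The only technicality is the integrability hypothesis of Lemma \ref{col:M3}, which follows from the standing assumption $\BE\int_0^t b^2\,\mathrm ds<\infty$.
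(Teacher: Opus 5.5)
Your proof is correct and is essentially the paper's own argument: the exponential martingale $Z$ with parameter $2\beta$ from Lemma~\ref{col:M3}, the pathwise bound $e^{2\beta Y(t)}\le Z(t)$ obtained from the drift hypothesis (since $\tfrac12(2\beta)^2=2\beta^2$), and Lemma~\ref{lem:M1} applied at level $e^{2\beta R}$. Your separate treatment of $R\le 0$ is harmless but unnecessary: $e^{2\beta R}>0$ for every real $R$, so Lemma~\ref{lem:M1} applies directly in all cases.
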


\begin{proof}
According to Lemma \ref{col:M3},
\begin{equation}\left(\exp\gb{2\be \int_0^t b(s)\,\mathrm dW(s) - 2\be^2 \int_0^t b(s)^2\,\mathrm ds}\right)_{t \ge 0}\end{equation}
is a non-negative continuous martingale. This implies, with the aid of Lemma \ref{lem:M1}, that for $t \ge 0$ and $R \in \BR$ we have  
\begin{align*}
\BP\,\left(\sup_{0 \le \th \le t} Y(\th) \ge R\right) &= \BP\,\left(\sup_{0 \le \th \le t} \exp\gb{2\be\int_0^\th b(s)\,\mathrm dW(s) - 2\be\int_0^\th a(s)\,\mathrm ds} \ge e^{2\be R}\right)\\
&\le  \BP\,\left(\sup_{0 \le \th \le t} \exp\gb{2\be\int_0^\th b(s)\,\mathrm dW(s) - 2\be^2\int_0^\th b(s)^2\,\mathrm ds} \ge e^{2\be R}\right)\\
&\le e^{-2\be R}\, \BE e^0 = e^{-2\be R},\yesnumber
\end{align*}
where we have exploited  \sef{eq:EsItô3} to estimate $\exp\gb{-2\be\gb{\int_0^\th a(s)\,\mathrm ds - \be \int_0^\th b(s)^2\,\mathrm ds}} \ge 1.$
\end{proof}

\begin{corollary}\label{col:EsItô2}
Let $\al >  0$ and $\si \in \BR$. Then, for every $t \ge 0$ and $R \in \BR$, we have 
\begin{equation}%\label{eq:EsItô5}
\BP\,\left(\sup_{0 \le \th \le t} \gb{-\al \th + \si W(\th)} \ge R \right) \le \exp\left({-\frac{2\al}{\si^2}R}\right).
\end{equation}
\end{corollary}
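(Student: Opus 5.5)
This is a direct specialization of Lemma \ref{lem:EsItô1} to constant coefficients. We take $a(s) \equiv \al$ and $b(s) \equiv \si$ in \eqref{eq:EsItô2}, so that
\begin{equation*}
Y(t) = -\al t + \si W(t).
\end{equation*}
These deterministic processes are trivially measurable and adapted. We also have $\int_0^t |a(s)|\,\mathrm ds = \al t < \infty$ and $\BE\int_0^t b(s)^2\,\mathrm ds = \si^2 t < \infty$, so the standing assumptions of Section \ref{sec:estims} hold.

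Assume first $\si \neq 0$ and put $\be := \al/\si^2 > 0$. Then
\begin{equation*}
\int_0^t a(s)\,\mathrm ds = \al t = \be\,\si^2 t = \be \int_0^t b(s)^2\,\mathrm ds \quad \mbox{for all } t \ge 0,
\end{equation*}
so \eqref{eq:EsItô3} holds with equality. Lemma \ref{lem:EsItô1} then gives
\begin{equation*}
\BP\left(\sup_{0\le\th\le t} Y(\th) \ge R\right) \le \exp(-2\be R) = \exp\left(-\frac{2\al}{\si^2}R\right),
\end{equation*}
which is the assertion.

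The only step needing care is the degenerate case $\si = 0$, where the right-hand side must be read with the convention $2\al R/\si^2 = +\infty$ for $R>0$.

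For $R > 0$: the supremum of $-\al\th$ over $[0,t]$ equals $0 < R$, so the probability is $0$. This matches $\exp(-\infty) = 0$.

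For $R \le 0$: the right-hand side is at least $1$ (it is $+\infty$ or $1$ under the natural convention), so the bound is trivial.

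Alternatively, one can exclude $\si = 0$ by convention, since the exponent is then undefined. Either way, no real obstacle is expected; the corollary follows immediately from Lemma \ref{lem:EsItô1}.
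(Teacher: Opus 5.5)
Your proposal is correct and is exactly the paper's argument: specialize Lemma~\ref{lem:EsItô1} to $a\equiv\alpha$, $b\equiv\sigma$ with $\beta=\alpha/\sigma^2$, so that \eqref{eq:EsItô3} holds (with equality). Your separate treatment of the degenerate case $\sigma=0$, where $\alpha\sigma^{-2}$ is undefined, is a small addition the paper leaves implicit, and you handle it correctly.
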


\begin{proof}
With $\be = \al\si^{-2}$ we have $\al t \ge \be \si^2 t$ for all $t \ge 0$ and Lemma \ref{lem:EsItô1} yields the result.
\end{proof}

Finding good probability estimates for the \textit{reverse time supremum}
 \begin{equation}\sup_{0 \le \th \le t} \gb{Y(t) - Y(\th)}  \end{equation}
is rather involved and the subject of the next lemmas.  In short, the proof of the first lemma requires a negative drift---no matter how small---to ensure an upper bound as in   \eqref{eqlem:EsItô7}. This bound tends to infinity as $\alpha \downarrow 0.$ 

\begin{lemma} \label{lem:EsItô2}%E4
	Let $Y=(Y(t))_{t\geq 0}$ be a stochastic process given by \eqref{eq:EsItô2}. Assume that there exist positive constants $\alpha$, $\beta_{0}$, and $\beta_{1}$ such that
	\begin{equation}\label{lem:EsItô6}
		a(s) \geq \alpha \quad \text { and } \quad 0<\beta_{0}^{2} \leq b(s)^{2} \leq \beta_{1}^{2}\quad  a.s.\, for\, all \,s\geq 0.
	\end{equation}
	Then for every $l \in \mathbb{N}$ and $R \geq 0$ we have
	\begin{equation}\label{eqlem:EsItô7}
		\mathbb{P}\left(\sup _{0 \leq \theta \leq l}(Y(l)-Y(\theta)) \geq R\right) \leq 2\exp \left(-\frac{R^{2}}{8 \beta_{1}^{2}}\right)+\frac{2\exp \left(-\frac{\alpha R}{8 \beta_{1}^{2}}\right)}{1-\exp \left(-\frac{\alpha^{2}}{16 \beta_{1}^{2}}\right)}.
	\end{equation}
\end{lemma}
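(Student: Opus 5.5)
The plan is to discretise the reverse-time supremum into unit blocks measured backwards from $l$. The drift $-\al(l-\th)$ then makes a block far back in time increasingly unlikely to contribute. Each block is controlled by a Gaussian tail bound for a Brownian maximum, obtained through the Dambis--Dubins--Schwarz time change, and the resulting bounds are summed as a geometric series.

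\textbf{Setup.} Write $M(t)=\int_0^t b(s)\,\mathrm dW(s)$. Since $a(s)\ge\al$, we have for $\th\le l$
\begin{equation*}
Y(l)-Y(\th)\le -\al(l-\th)+M(l)-M(\th).
\end{equation*}
For $k=0,\dots,l-1$ let $I_k=[l-k-1,l-k]$. If $\sup_{0\le\th\le l}(Y(l)-Y(\th))\ge R$, then for some $k$ and some $\th\in I_k$ we have $M(l)-M(\th)\ge R+\al(l-\th)\ge R+\al k$. Decompose
\begin{equation*}
M(l)-M(\th)=\bigl(M(l)-M(l-k-1)\bigr)-\bigl(M(\th)-M(l-k-1)\bigr).
\end{equation*}
So either $M(l)-M(l-k-1)\ge (R+\al k)/2$, or $\sup_{\th\in I_k}\bigl(-(M(\th)-M(l-k-1))\bigr)\ge (R+\al k)/2$.

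\textbf{Time change.} Because $\be_0^2\le b^2\le\be_1^2$, Theorem \ref{thm:TC1} applies, giving $M(t)=B(T(t))$ with $B$ a Brownian motion for $(\SG_s)$. By Lemma \ref{lem:TC1}(a), $T(l-k-1)$ is a $(\SG_s)$-stopping time, and it is finite because $T(t)\le\be_1^2 t$. Lemma \ref{lem:BM1} then makes $\wtilde B(s)=B(T(l-k-1)+s)-B(T(l-k-1))$ a Brownian motion. Since $T(l)-T(l-k-1)\le \be_1^2(k+1)$, both events above are contained in $\{\wtilde B^\ast(\be_1^2(k+1))\ge (R+\al k)/2\}$ or the same event with $-\wtilde B$ in place of $\wtilde B$. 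By Corollary \ref{col:BM1} applied to $\pm\wtilde B$, the probability of the $k$-th block is at most
\begin{equation*}
2\exp\Bigl(-\frac{(R+\al k)^2}{8\be_1^2(k+1)}\Bigr).
\end{equation*}

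\textbf{Summation.} For $k=0$ this is exactly $2\exp(-R^2/(8\be_1^2))$. For $k\ge1$, use $(R+\al k)^2\ge 2\al Rk+\al^2k^2$ and $k+1\le 2k$ to get $(R+\al k)^2/(k+1)\ge \al R+\al^2k/2$. This uses $R\ge0$, and discards the nonnegative $R^2$ term. The $k$-th term is then at most
\begin{equation*}
2\exp\bigl(-\al R/(8\be_1^2)\bigr)\exp\bigl(-\al^2k/(16\be_1^2)\bigr).
\end{equation*}
Summing over $k\ge1$, and bounding the geometric sum by $1/(1-e^{-\al^2/(16\be_1^2)})$, gives the second term of \eqref{eqlem:EsItô7}.

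\textbf{Main obstacle.} The delicate point is the reverse-time structure: the supremum ranges over left endpoints $\th$, so it is not a forward maximal quantity. The splitting through the fixed anchor $l-k-1$ converts it into forward suprema starting at a stopping time, at the cost of a factor $2$ in the threshold. The other place needing care is that the horizon $T(l)-T(l-k-1)$ is random. This is handled by monotonicity of $s\mapsto \wtilde B^\ast(s)$ together with the deterministic bound $T(l)-T(l-k-1)\le\be_1^2(k+1)$. The lower bound $\be_0$ is used only to make the time change well defined.
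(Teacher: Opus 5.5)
Your proposal is correct and follows the paper's proof essentially step for step. It uses the same unit-block decomposition anchored at the left endpoint of each block, the same Dambis--Dubins--Schwarz time change together with Lemmas \ref{lem:TC1}(a) and \ref{lem:BM1}, the same halving of the threshold with Corollary \ref{col:BM1} applied to $\pm\widetilde B$, and the same summation via $k+1\le 2k$ and a geometric series. The differences are only cosmetic: you index the blocks backwards from $l$ from the outset where the paper reindexes at the end, and you state explicitly that $T(l-k-1)<\infty$, which Lemma \ref{lem:BM1} requires.
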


\begin{proof}
	 Define
	\begin{equation}
	T(t):=\int_{0}^{t} b(s)^{2} \mathrm d s, \quad t \geq 0.
	\end{equation}
	Almost surely, the map $t \mapsto T(t)$ is continuous and strictly increasing, $T(0)=0,$ and $\lim _{t \rightarrow \infty} T(t)=\infty .$  Introduce the family of $\mathbb F$-stopping times $(S(t))_{t\geq 0}$ defined by
	\begin{equation}
		S(t)=\inf \left\{s \geq 0 ;\, T(s)>t\right\},
	\end{equation}
where $\mathbb{F}:=(\mathcal{F}_{t} )_{t \geq 0}$. Put
	$
	\mathcal{G}_{t}:=\mathcal{F}_{S(t)},  \, t \geq 0,
	$
	and consider the stochastic process $B=(B(t))_{t\geq0 } $, defined by
	\begin{equation}
	B(t):=\int_{0}^{S(s)} b(s) \,\mathrm d W(s), \quad t \geq 0.
	\end{equation}
	Thanks to the Dambis--Dubins--Schwarz time change result for local martingales, see Theorem \ref{thm:TC1}, we obtain that $B$ is a standard Brownian motion on $ (\Omega, \mathcal{F},\mathbb G, \mathbb{P} ) ,$ where $\mathbb G:=(\mathcal{G}_{t} )_{t \geq 0}$.
	
	For $l \in \mathbb{N}$ and $R \geq 0$ fixed, we have
	\begin{align} 
		\mathbb{P}\left(\sup _{0 \leq \theta \leq l}(Y(l)-Y(\theta)) \geq R\right)  
		\nonumber&=\mathbb{P}\left(\sup _{0 \leq \theta \leq t}-\int_{\theta}^{l} a(s) \,\mathrm d s+\int_{\theta}^{l} b(s) \,\mathrm d W(s) \geq R\right) \\
		\nonumber&\leq \mathbb{P}\left(\sup _{0 \leq \theta \leq t} \int_{\theta}^{l} b(s)\,\mathrm  d W(s)-\alpha(l-\theta) \geq R\right) \\
		\nonumber&=\mathbb{P}\left(\sup _{0 \leq \theta \leq t} B(T(l))-B(T(\theta))-\alpha(l-\theta) \geq R\right) \\
%		&=\mathbb{P}\left(\sup _{0 \leq \theta \leq t} \exists k \in[1, \ldots, l]: \sup _{k-1 \leq \theta \leq k} B(T(l))-B(T(\theta))-\alpha(l-\theta) \geq R\right) \\
		\nonumber&\leq \sum_{k=1}^{l} \mathbb{P}\left(\sup _{k-1 \leq \theta \leq k} B(T(l))-B(T(\theta))-\alpha(l-\theta) \geq R\right) \\
		&\leq \sum_{k=1}^{l} \mathbb{P}\left(\sup _{k-1 \leq \theta \leq k} B(T(l))-B(T(\theta)) \geq R+\alpha(l-k)\right).
	\end{align}
	For $k \in\{1, \ldots, l\},$ the time $k-1$ is a deterministic stopping time, hence Lemma \ref{lem:TC1} yields that $T(k-1)$ is a $\mathbb G$-stopping time. Indeed, for all $u\geq 0,$ we have
	$\{T(k-1)\leq u\}=\{k-1\leq S(u)\}\in \mathcal F_{k-1}\cap \mathcal F_{S(u)}$. According  to  Lemma \ref{lem:BM1}, it follows that
	\begin{equation}
	B_{k}(t):=B(T(k-1)+t)-B(T(k-1)), \quad t \geq 0 
	\end{equation}
	is a standard Brownian motion with respect to the filtration $\left(\mathcal{G}_{T(k-1)+t}\right)_{t \geq 0}$. 

Moreover, for any  $0 \leq s \leq t$ and  $k \in\{1, \ldots, l\},$ it holds that
	$ 
	T(t)-T(s) \leq \beta_{1}^{2}(t-s)
	$,  
	hence
	\begin{equation}
	T(l)-T(k-1) \leq \beta_{1}^{2}(l-k+1),
	\end{equation}
	and for every $\theta \in[k-1, k]$, we have
\begin{equation}	T(\theta)-T(k-1) \leq \beta_{1}^{2}(\theta-k+1) \leq \beta_{1}^{2}.\end{equation}
Consequently, we obtain the following inequality:
 \begin{align} 
	\sup _{k-1 \leq \theta \leq k} \nonumber & B(T(l))-B(T(\theta)) 
	\\\nonumber &\leq  B(T(l))-B(T(k-1))+\sup _{k-1 \leq \theta \leq k}-\big(B(T(\theta))-B(T(k-1))\big) \\
	\nonumber &=  B_{k}(T(l)-T(k-1))+\sup _{k-1 \leq \theta \leq k} \big(-B_{k}(T(\theta)-T(k-1))\big) \\
	\nonumber &\leq  \sup _{0 \leq \theta \leq \beta_{1}^{2}(l-k+1)} B_{k}(\theta)+\sup _{0 \leq \theta \leq \beta_{1}^{2}}\big(-B_{k}(\theta)\big) \\
	&\leq     \sup _{0 \leq \theta \leq \beta_{1}^{2}(l-k+1)} B_{k}(\theta)+   \sup _{0 \leq \theta \leq \beta_{1}^{2}(l-k+1)} \big(-B_{k}(\theta)\big).
\end{align}
Note that $-B_k=(-B_k(t))_{t\geq 0}$ is a standard Brownian motion, since $B_k$ is a standard Brownian motion. 
%For our own convenience, introduce the notation
%\begin{equation}
%	X_k^+(t)= \sup _{0 \leq \theta \leq t} B_{k}(\theta)\quad\text{and}\quad X_k^-(t)=\sup %_{0 \leq \theta \leq t} \big(-B_{k}(\theta)\big) ,\quad t\geq 0.
%\end{equation}
%By the reflection principle for Brownian motion \cite[Thm I.33]{book:protter}, we obtain
Due to Corollary \ref{col:BM1} we have
\begin{equation}
\mathbb{P} \left(\sup _{0 \leq \theta \leq t} B_{k}(\theta)\geq c\right) = \mathbb{P} \left(\sup _{0 \leq \theta \leq t} \left(-B_{k}(\theta)\right)\geq c\right)  \leq \exp\left(-\frac{c^2}{2t}\right),%\label{A11}
\end{equation}
for any $t> 0$ and $c\geq 0.$ 
%The inequality in \eqref{A11} is easily  obtained via  a change of variables, where one considers the transformation   $u=x-c.$ 
%The same upper bound  holds true for $X_k^-$, simply due to the fact that $X_k^+$ and $X_k^-$ equal in distribution.  

Combining all the results above yields
\begin{align}
	\nonumber\mathbb{P}\Bigg(\sup _{0 \leq \theta \leq l}-\int_{\theta}^{l} \nonumber&a(s) \,\mathrm d s+\int_{\theta}^{l} b(s)\,\mathrm d W(s) \geq R\Bigg)\\   
	\nonumber&	\leq \sum_{k=1}^{l} \mathbb{P}\left(\sup _{k-1 \leq \theta \leq k} B(T(l))-B(T(\theta)) \geq  \alpha(l-k)+R\right) \\
	\nonumber&\leq  \sum_{k=1}^{l} \mathbb{P}\Big( 
    \sup_{0\le \theta\le \be_1^2(l-k+1)} B_k(\theta)
   + \sup_{0\le \theta\le \be_1^2(l-k+1)} \left(-B_k(\theta)\right)  \geq \alpha(l-k)+R\Big) \\
	\nonumber&\leq  \sum_{k=1}^{l} 2\cdot \mathbb{P}\left(\sup_{0\le \theta\le \be_1^2(l-k+1)} B_k(\theta) \geq \frac{\alpha}{2}(l-k)+\frac{1}{2} R\right) \\
	&\leq  2\sum_{k=1}^{l} \exp \left(-\frac{\left(\frac{\alpha}{2}(l-k)+\frac{1}{2} R\right)^{2}}{2 \beta_{1}^{2}(l-k+1)}\right).
\end{align} 
 The assertion now follows from the estimation  
\begin{align}
	\sum_{k=1}^{l} \exp \nonumber&\left(-\frac{\left(\frac{\alpha}{2}(l-k)+\frac{1}{2} R\right)^{2}}{2 \beta_{1}^{2}(l-k+1)}\right) 
	=   \sum_{k=0}^{l-1} \exp \left(-\frac{(\alpha k+R)^{2}}{8 \beta_{1}^{2}(k+1)}\right) \\
\nonumber&\qquad\qquad	= \sum_{k=0}^{l-1} \exp \left(-\frac{\alpha^{2} k^{2}+2 \alpha k R+R^{2}}{8 \beta_{1}^{2}(k+1)}\right) \\
\nonumber&\qquad\qquad	= \sum_{k=0}^{l-1} \exp \left(-\frac{\alpha^2 }{8 \beta_{1}^{2}} \frac{k^{2}}{k+1}-\frac{2 \alpha R}{8 \beta_{1}^{2}} \frac{k}{k+1}\right) \exp \left(-\frac{R^{2}}{8 \beta_{1}^{2}(k+1)}\right) \\
 \nonumber&\qquad\qquad	\leq \exp \left(-\frac{R^{2}}{8 \beta_{1}^{2}}\right)+\sum_{k=1}^{l-1} \exp \left(-\frac{\alpha^{2}}{16 \beta_{1}^{2}}k-\frac{\alpha R}{8 \beta_{1}^{2}}\right) \\
 &\qquad\qquad 	\leq\exp \left(-\frac{R^{2}}{8 \beta_{1}^{2}}\right)+\exp \left(-\frac{\alpha R}{8 \beta_{1}^{2}}\right)\left(1-\exp \left(-\frac{\alpha^{2}}{16 \beta_{1}^{2}}\right)\right)^{-1},
\end{align}
where we have used that $k /(k+1) \geq 1 / 2$ holds for $k\geq 1$. 
\end{proof}

The condition in Lemma \ref{lem:EsItô2} that $b$ be nonzero can be disposed of at the cost of larger constants in the estimate. The following slight generalisation in this spirit is  useful in the sequel.

	 \begin{lemma}\label{lem:EsItô2bis}
	 	 	Let $Y=(Y(t))_{t\geq 0}$ be a stochastic process given by \eqref{eq:EsItô2}.  
	 	 	 Assume that there exist positive constants $\alpha$ and $\beta$ such that
	 	 	\begin{equation}
	 	 		a(s) \geq \alpha \quad \text { and } \quad     b(s)^{2} \leq \beta^{2}\quad   \hbox{a.s. for all } s\geq 0. 
	 	 	\end{equation}
	Then for every $l \in \mathbb{N}$ and $R \geq 0$ we have
	\begin{equation}\label{eq:EsItô7}
	\mathbb{P}\left(\sup _{0 \leq \theta \leq l}(Y(l)-Y(\theta)) \geq R\right) \leq 4 \exp \left(-\frac{R^{2}}{64 \beta^{2}}\right)+\frac{4\exp \left(-\frac{\alpha R}{64 \beta^{2}}\right)}{1-\exp \left(-\frac{\alpha^{2}}{128 \beta^{2}}\right)}.
	\end{equation}
	 \end{lemma}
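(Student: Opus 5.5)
The plan is to reduce to Lemma \ref{lem:EsItô2} by adding an independent noise term that makes the diffusion coefficient bounded away from zero. Enlarge the probability space (by a product construction) so that it carries a second Brownian motion $W'$, independent of $W$ and of $\SF_\infty$. Use the filtration generated by $\SF_t$ and $W'$, which again satisfies the usual conditions; $W$ remains a Brownian motion for this filtration and $a,b$ remain adapted. Then define
\begin{equation*}
Z(t) := -\int_0^t a(s)\,\mathrm ds + \int_0^t \tilde b(s)\,\mathrm d\tilde W(s),
\end{equation*}
where the process $\tilde W := (W+W')/\sqrt2$ is a standard Brownian motion and $\tilde b$ is chosen so that
\begin{equation*}
\int_0^t \tilde b\,\mathrm d\tilde W = \int_0^t b\,\mathrm dW + \beta\, W'(t).
\end{equation*}
Equivalently, it is cleaner to note that $N(t):=\int_0^t b\,\mathrm dW+\beta W'(t)$ is a continuous martingale with $[N]_t=\int_0^t (b^2+\beta^2)\,\mathrm ds$, so that $\beta^2\le b^2+\beta^2\le 2\beta^2$. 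The proof of Lemma \ref{lem:EsItô2} only uses the Dambis--Dubins--Schwarz time change of the martingale part together with the bounds on its quadratic variation. Hence that proof applies verbatim to $Z(t)=-\int_0^t a\,\mathrm ds+N(t)$ with $\beta_0^2=\beta^2$ and $\beta_1^2=2\beta^2$. Alternatively, one can write $N$ as a stochastic integral against the two-dimensional Brownian motion $(W,W')$ and invoke Theorem \ref{thm:TC1} in that form.

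Next, write $Y=Z-\beta W'$. Then
\begin{equation*}
\sup_{0\le\theta\le l}\bigl(Y(l)-Y(\theta)\bigr)\le \sup_{0\le\theta\le l}\bigl(Z(l)-Z(\theta)\bigr)+\sup_{0\le\theta\le l}\bigl(-\beta(W'(l)-W'(\theta))\bigr).
\end{equation*}
Split $R=R/2+R/2$, so that the probability is bounded by the sum of two terms. The first term is handled by the analogue of Lemma \ref{lem:EsItô2} with $\beta_1^2=2\beta^2$ and $R/2$, which gives
\begin{equation*}
2e^{-R^2/(64\beta^2)}+\frac{2e^{-\alpha R/(32\beta^2)}}{1-e^{-\alpha^2/(32\beta^2)}}.
\end{equation*}
This is already within the claimed bound.

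The second term is the main obstacle: it is a pure Brownian reverse-time supremum with no drift, so it is not uniformly bounded in $l$ by itself. To fix this, I would not split off $W'$ with zero drift. Instead, I would split the drift as well, writing $-a=-(a-\alpha/2)-\alpha/2$ so that both pieces carry negative drift. The decomposition becomes
\begin{equation*}
Y=\Bigl(-\int_0^t (a-\tfrac{\alpha}{2})\,\mathrm ds+N(t)\Bigr)+\bigl(-\tfrac{\alpha}{2}t-\beta W'(t)\bigr).
\end{equation*}
The first bracket has drift at least $\alpha/2$ and quadratic variation rate in $[\beta^2,2\beta^2]$. The second is a Brownian motion with drift $-\alpha/2$ and constant coefficient $\beta$, so it also satisfies the hypotheses of Lemma \ref{lem:EsItô2} with $\beta_0=\beta_1=\beta$. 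Its reverse-time increments are the same in law as those of $-\tfrac{\alpha}{2}t+\beta W''(t)$ for a Brownian motion $W''$, by reflection invariance.

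Applying Lemma \ref{lem:EsItô2} to each piece with $\alpha/2$, $R/2$ and $\beta_1^2\le 2\beta^2$ gives, for each piece,
\begin{equation*}
2e^{-R^2/(64\beta^2)}+\frac{2e^{-\alpha R/(64\beta^2)}}{1-e^{-\alpha^2/(128\beta^2)}}.
\end{equation*}
Summing the two pieces yields exactly \eqref{eq:EsItô7}. The only remaining care is the enlargement of the filtration, which is needed so that the time change and Lemma \ref{lem:BM1} remain valid.
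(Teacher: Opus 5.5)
Your proof is correct, but it takes a different route from the paper.

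\textbf{What the two proofs share.} Both split the drift so that each of two pieces has drift at least $\alpha/2$. Both then apply Lemma~\ref{lem:EsItô2} twice with $\alpha/2$, $R/2$ and $\beta_1^2=2\beta^2$. This is why the constants coincide exactly.

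\textbf{Where they differ.} The difference is how the possibly degenerate coefficient $b$ is made nondegenerate.

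The paper stays on the original space with the single Brownian motion $W$. It writes $b=\tfrac12(b+c)+\tfrac12(b-c)$ with the deterministic shift $c=(\sqrt8-1)\beta>\beta$, and splits $Y=Y_1+Y_2$. Each $Y_i$ has drift $\tfrac12 a\ge\alpha/2$ and coefficient satisfying $0<\tfrac14(c-\beta)^2\le\tfrac14(b\pm c)^2\le\tfrac14(\beta+c)^2=2\beta^2$. Lemma~\ref{lem:EsItô2} then applies literally.

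You instead add and subtract an independent $\beta W'$. This is a standard regularization device, and it makes your second piece an explicit drifted Brownian motion. It costs you more, however. You need an enlargement of the probability space and filtration, where you must check that $W$ remains a Brownian motion and that $\int b\,\mathrm dW$ is unchanged. You also need the Dambis--Dubins--Schwarz theorem for a general continuous martingale, rather than the form in Theorem~\ref{thm:TC1}, when you rerun the proof of Lemma~\ref{lem:EsItô2} for $N$. The paper's deterministic-shift trick avoids all of this.

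\textbf{One slip.} Your first formulation with $\tilde W=(W+W')/\sqrt2$ is not equivalent to the second. A process $\tilde b$ with $\int\tilde b\,\mathrm d\tilde W=\int b\,\mathrm dW+\beta W'$ exists only when $b\equiv\beta$; compare the covariations with $W$ and with $W'$.

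If you want $N$ as a single stochastic integral, take $\hat W=\int \frac{b}{\sqrt{b^2+\beta^2}}\,\mathrm dW+\int\frac{\beta}{\sqrt{b^2+\beta^2}}\,\mathrm dW'$. This is a Brownian motion by L\'evy's characterization, and $N=\int\sqrt{b^2+\beta^2}\,\mathrm d\hat W$. Then Lemma~\ref{lem:EsItô2} applies as stated, without rerunning its proof.
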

	
\begin{proof} Let $c>\beta$ be arbitrary, yet fixed. Introduce the stochastic processes $Y_1$ and $Y_2$, where
	\begin{equation}
		Y_{1}(t):=-\frac{1}{2} \int_{0}^{t} a(s) \,\mathrm d s+\frac{1}{2} \int_{0}^{t}(b(s)+c) \,\mathrm d W(s),\quad t\geq 0,  
	\end{equation}
and
\begin{equation}
		Y_{2}(t):=-\frac{1}{2} \int_{0}^{t} a(s) \,\mathrm d s+\frac{1}{2} \int_{0}^{t}(b(s)-c) \,\mathrm d W(s),\quad t\geq 0.
	\end{equation} 
	  Note that $Y(t)=Y_{1}(t)+Y_{2}(t)$
	 holds for all $t \geq 0$.
	 
	  In addition, the stochastic processes $Y_{1}$ and $Y_{2}$ satisfy the conditions of Lemma \ref{lem:EsItô2} with $\alpha$ and $\beta^{2}_1$ replaced by $\alpha / 2$ and   $ (\beta  +c)^{2} / 4$, respectively, since
	\begin{equation}
\beta+c\geq 	b(s)+c \geq-\beta  +c>0 \quad \text { and } \quad -(\beta+c)\leq   b(s)-c<\beta -c<0.
	\end{equation}
	Taking $c=(\sqrt8-1) \beta >\beta$ specifically, gives us $ (\beta  +c)^{2} / 4=2\beta^2$ and
	\begin{align} 
		&\nonumber\mathbb{P}\left(\sup _{0 \leq \theta \leq l}(Y(l)-Y(\theta)) \geq R\right) \\ 
		 \nonumber&\qquad\qquad\leq \mathbb{P}\left(\sup _{0 \leq \theta \leq l}\left(Y_{1}(l)-Y_{1}(\theta)\right)+\sup _{0 \leq \theta \leq l}\left(Y_{2}(l)-Y_{2}(\theta)\right) \geq R\right) \\
		\nonumber&\qquad\qquad \leq \mathbb{P}\left(\sup _{0 \leq \theta \leq l}\left(Y_{1}(l)-Y_{1}(\theta)\right) \geq R / 2\right)+\mathbb{P}\left(\sup _{0 \leq \theta \leq l}\left(Y_{2}(l)-Y_{2}(\theta)\right) \geq R / 2\right) \\
		&\qquad\qquad \leq 4 \exp \left(-\frac{R^{2}}{64 \beta^{2}}\right)+\frac{4\exp \left(-\frac{\alpha R}{64 \beta^{2}}\right)}{1-\exp \left(-\frac{\alpha^{2}}{128 \beta^{2}}\right)}.
	\end{align}
This completes the proof.
\end{proof}

An important feature of the estimates in Lemma \ref{lem:EsItô1} and Corollary \ref{col:EsItô2}  is that they do not depend on the length of the interval $[0,l]$ over which the supremum is taken. It is easier to obtain estimates for a supremum over an interval with a fixed length, for which one does not a negative drift. Such estimates are known even in a setting of Banach space valued stochastic integrals; see \cite[Corollary 4.4]{vanNeerven-Veraar2020}. We present an estimate in the next lemma which is sufficient for our purposes and we give a proof using the same techniques as the previous proofs in this section. See \cite[Lemma 4.3]{vandenbosch2026b} for an alternative proof and a more detailed discussion.

 \begin{lemma}\label{lem:EsItô3a}%E3-New
  	Let $W=(W(t))_{t\geq 0}$ be a standard Brownian motion. Assume there exists a positive constant $\beta$ such that
  	\begin{equation}
  		b(s)^2\leq \beta^2\quad \text{a.s.  for all $s\geq 0$}.
  	\end{equation}
  	Let $t_0\geq 0$ and $T>0$ be fixed. Then for every $R\geq 0$ we have
  	\begin{equation}
  		\mathbb P\left(\sup_{t_0\leq t\leq t_0+T}\dint{t_0}t{b(s)}W(s)\geq R\right)\leq 2\exp\left(-\frac{R^2}{16\beta^2T}\right).
  	\end{equation}
        If, in addition, there exists a positive constant $\beta_0$ such that
  	\begin{equation}
  		0 < \beta_0^2 \le b(s)^2\leq \beta^2\quad \text{a.s.  for all $s\geq 0$},
  	\end{equation}
  	then
  	\begin{equation}
  		\mathbb P\left(\sup_{t_0\leq t\leq t_0+T}\dint{t_0}t{b(s)}W(s)\geq R\right)\leq \exp\left(-\frac{R^2}{2\beta^2T}\right).
  	\end{equation}
  \end{lemma}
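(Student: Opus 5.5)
The second assertion, where $0<\be_0^2\le b^2\le \be^2$, comes first because it follows directly from the Dambis--Dubins--Schwarz time change, exactly as in the proof of Lemma \ref{lem:EsItô2}. Set $\widetilde W(t):=W(t_0+t)-W(t_0)$. By Lemma \ref{lem:BM1} this is a Brownian motion with respect to $(\SF_{t_0+t})_{t\ge0}$, and $\tilde b(s):=b(t_0+s)$ is adapted to that filtration. Then
\begin{equation*}
\int_{t_0}^{t_0+t} b(s)\,\mathrm dW(s)=\int_0^t \tilde b(s)\,\mathrm d\widetilde W(s).
\end{equation*}
We may therefore take $t_0=0$. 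Put $T(t)=\int_0^t b(s)^2\,\mathrm ds$. Theorem \ref{thm:TC1} gives a Brownian motion $B$ with $\int_0^t b\,\mathrm dW=B(T(t))$. Since $T(t)\le \be^2 t$, we get
\begin{equation*}
\sup_{0\le t\le T}\int_0^t b\,\mathrm dW\le \sup_{0\le u\le \be^2T}B(u).
\end{equation*}
Corollary \ref{col:BM1} then yields the bound $\exp(-R^2/(2\be^2T))$. The hypothesis $\be_0>0$ is needed only so that $T(\cdot)$ is strictly increasing and tends to infinity, which makes Theorem \ref{thm:TC1} applicable as stated. If one prefers not to assume the latter, extend $b$ by $\be$ after time $t_0+T$.

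For the general case $b^2\le\be^2$, I would use the splitting trick from Lemma \ref{lem:EsItô2bis}. Choose $c>\be$ and write
\begin{equation*}
b=\tfrac12(b+c)+\tfrac12(b-c).
\end{equation*}
Both parts are bounded away from zero: $\tfrac14(c-\be)^2\le \tfrac14(b\pm c)^2\le \tfrac14(\be+c)^2$. Hence each of the integrals $\int_{t_0}^t \tfrac12(b\pm c)\,\mathrm dW$ satisfies the nondegenerate estimate just proved, with $\be^2$ replaced by $(\be+c)^2/4$.

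The supremum of the sum is at most the sum of the two suprema. A union bound with threshold $R/2$ for each term then gives
\begin{equation*}
\BP\Big(\sup_{t_0\le t\le t_0+T}\int_{t_0}^t b\,\mathrm dW\ge R\Big)\le 2\exp\Big(-\frac{(R/2)^2}{2\,\frac{(\be+c)^2}{4}T}\Big)=2\exp\Big(-\frac{R^2}{2(\be+c)^2T}\Big).
\end{equation*}
We need $2(\be+c)^2\le 16\be^2$, that is $c\le(\sqrt8-1)\be$. The choice $c=(\sqrt8-1)\be$ exceeds $\be$ and gives exactly $16\be^2$, which yields the stated constant.

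The only delicate step is the applicability of the time change. It requires that the filtration shift at $t_0$ is justified and that the integrands are nondegenerate, so that the inverse $S$ of $T(\cdot)$ is well defined. The splitting with $c>\be$ handles the nondegeneracy, exactly as in Lemma \ref{lem:EsItô2bis}. The rest is a routine union bound.
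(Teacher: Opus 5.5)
Your proof is correct and follows essentially the same route as the paper. In the nondegenerate case you shift to the filtration $(\SF_{t_0+t})_{t\ge0}$, apply the Dambis--Dubins--Schwarz time change, and bound by $\sup_{0\le u\le \be^2T}B(u)$ via Corollary \ref{col:BM1}; in the general case you split $b=\frac12(b+c)+\frac12(b-c)$ with $c=(\sqrt8-1)\be$ and take a union bound at level $R/2$, which is exactly the paper's choice $0<c\pm b\le 2\sqrt2\,\be$ (equivalently, $\be^2$ replaced by $8\be^2$), and your constant check $2(\be+c)^2=16\be^2$ is right.
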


 \begin{proof}
  	As before,  first consider the case that there exists  positive constants $\beta$ and $\beta_0$ such that
 \begin{equation}\label{eq:coeff_b_strictly_positive}
   0<\beta_{0}^{2} \leq b(s)^{2} \leq \beta^{2}\quad \hbox{a.s. for all } s\geq 0. 
 \end{equation} 
	Define the continuous martingale
	\begin{equation}
	M(t):=\int_{t_{0}}^{t_{0}+{t}} b(s)\,\mathrm  d W(s), \quad t \geq 0, 
	\end{equation}
	which is with respect to the filtration $\tilde {\mathbb F}:= ( \tilde{\mathcal{F}}_{t} )_{t \geq 0}$, given by
	$ 
	\tilde{\mathcal{F}}_{t}:=\mathcal{F}_{t_{0}+t},\,   t \geq 0.
	$ 
Subsequently, consider the time change
% \todo{2x T onhandig}
	\begin{equation}
	T(t)=\int_{t_{0}}^{t_{0}+t} b(s) ^2\,\mathrm d s, \quad t \geq 0.
\end{equation}
	The map $t \mapsto T(t)$ is continuous and strictly increasing a.s., $T(0)=0,$ and $\lim _{t \rightarrow \infty} T(t)=\infty .$

Let $(S(t))_{t\geq 0}$ be  the $\tilde{\mathbb F}$-stopping times defined by
		$
		S(t)=\inf \left\{s \geq 0 ;\, T(s)>t\right\}.
	$
	Set
	$
	\mathcal{G}_{t}:=\tilde {\mathcal{F}}_{S(t)}$,  $ t \geq 0.
	$
		Thanks to the  Dambis--Dubins--Schwarz theorem, see Theorem \ref{thm:TC1}, we know that there exists a standard Brownian motion $B$ on $ (\Omega, \mathcal{F},\mathbb G, \mathbb{P} ) ,$ where $\mathbb G:=(\mathcal{G}_{t} )_{t \geq 0}$,  such that
	\begin{equation}
	\int_{t_{0}}^{t_{0}+t} b(s) \,\mathrm d W(s)=B(T(t)),  \quad t \geq 0.
	\end{equation}
	For every $t \in\left[t_{0}, t_{0}+T\right]$ we have
	\begin{equation}
	T(t) \geq 0 \quad \text { and } \quad T(t) \leq \beta^{2} T,
	\end{equation}
	holding $\mathbb P $-almost surely.
	Hence
	\begin{align}
		\nonumber\mathbb{P}\left(\sup _{t_{0} \leq t \leq t_{0}+T} \int_{t_{0}}^{t} b(s) \,\mathrm d W(s) \geq R\right) &=\mathbb{P}\left(\sup _{0 \leq t \leq T} B(T(t)) \geq R\right) \\
		\nonumber& \leq \mathbb{P}\left(\sup _{0 \leq t \leq \beta^{2} T} B(t) \geq R\right) \\
		& \leq \exp \left(-\frac{R^{2}}{2 \beta^{2} T}\right),\label{eq:exp_estim_end_proof_lemma}
	\end{align} 
by Corollary \ref{col:BM1}. 
	
We can reduce the general case to the case where \eqref{eq:coeff_b_strictly_positive} holds. This is achieved along    similar  lines as in the proof of Lemma \ref{lem:EsItô2bis}. In essence, we can choose $c>\beta$ such that $0<c\pm b(s)\le 2\sqrt{2}\beta$ and  apply \eqref{eq:exp_estim_end_proof_lemma} twice with $\beta^{2}$ replaced by $8\beta^{2}$. 
\end{proof}

\begin{corollary}\label{col:EsItô3abis}
Let $Y(t) $ be given by \sef{eq:EsItô2}. Assume there exist positive constants $\al$ and $\be$ such that
\begin{equation}a(s) \ge \al\quad\hbox{and}\quad b(s)^2 \le \be^2\quad \hbox{a.s. for all } s \ge 0.\end{equation}
Then the process
\begin{equation}\left(\sup_{0 \le \th \le t} \gb{Y(t) - Y(\th)} \right)_{t \ge 0}\end{equation}is bounded above in probability.
\end{corollary}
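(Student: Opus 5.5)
The plan is to deduce the statement from Lemma \ref{lem:EsItô2bis}, which already gives a tail bound uniform in $l\in\mathbb{N}$. What remains is to pass from integer times $l$ to arbitrary $t\ge 0$. Given $\varepsilon>0$, the right-hand side of \eqref{eq:EsItô7} is independent of $l$ and tends to $0$ as $R\to\infty$, since $\alpha>0$ makes the geometric denominator a fixed positive number. So I would first choose $R_1$ with
\begin{equation*}
\mathbb{P}\Bigl(\sup_{0\le\theta\le l}(Y(l)-Y(\theta))\ge R_1\Bigr)\le \varepsilon/2\qquad\text{for all } l\in\mathbb{N}.
\end{equation*}

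Next, for $t\ge0$ let $l=\lceil t\rceil$ (taking $l=1$ if $t=0$), so that $t\in[l-1,l]$. For $\theta\le t$ I would write
\begin{equation*}
Y(t)-Y(\theta)=\bigl(Y(l)-Y(\theta)\bigr)-\bigl(Y(l)-Y(t)\bigr).
\end{equation*}
The first term is at most $\sup_{0\le\theta\le l}(Y(l)-Y(\theta))$. For the second, since $a\ge\alpha>0$,
\begin{equation*}
-(Y(l)-Y(t))=\int_t^l a(s)\,\mathrm ds-\int_t^l b(s)\,\mathrm dW(s).
\end{equation*}
This is \emph{not} bounded by the drift sign alone, because $a$ has no upper bound in this corollary. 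This is the main obstacle: the drift term $\int_t^l a\,\mathrm ds$ could be large.

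To avoid it, I would instead compare $t$ with the lower integer $k=\lfloor t\rfloor$. For $\theta\le k$,
\begin{equation*}
Y(t)-Y(\theta)=\bigl(Y(k)-Y(\theta)\bigr)+\bigl(Y(t)-Y(k)\bigr),
\end{equation*}
and since the drift is negative,
\begin{equation*}
Y(t)-Y(k)\le \int_k^t b(s)\,\mathrm dW(s)\le \sup_{k\le u\le k+1}\int_k^u b(s)\,\mathrm dW(s).
\end{equation*}
For $\theta\in[k,t]$, $Y(t)-Y(\theta)\le\int_\theta^t b\,\mathrm dW$, which is at most
\begin{equation*}
\sup_{k\le u\le k+1}\int_k^u b\,\mathrm dW+\sup_{k\le u\le k+1}\Bigl(-\int_k^u b\,\mathrm dW\Bigr).
\end{equation*}
Hence in all cases
\begin{equation*}
\sup_{0\le\theta\le t}(Y(t)-Y(\theta))\le \sup_{0\le\theta\le k}(Y(k)-Y(\theta))+2M_k^+ + M_k^-,
\end{equation*}
where $M_k^{\pm}=\sup_{k\le u\le k+1}\pm\int_k^u b\,\mathrm dW$. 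The case $k=0$ has a trivially zero first term.

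Lemma \ref{lem:EsItô3a}, applied with $t_0=k$, $T=1$, and with $b$ and $-b$ (both bounded by $\beta^2$), gives
\begin{equation*}
\mathbb{P}(M_k^{\pm}\ge R)\le 2e^{-R^2/(16\beta^2)},
\end{equation*}
uniformly in $k$. I would choose $R_2$ such that $\mathbb{P}(M_k^+\ge R_2)+\mathbb{P}(M_k^-\ge R_2)\le\varepsilon/2$. A union bound then yields
\begin{equation*}
\mathbb{P}\Bigl(\sup_{0\le\theta\le t}(Y(t)-Y(\theta))\ge R_1+3R_2\Bigr)\le\varepsilon
\end{equation*}
for all $t\ge0$, which is boundedness above in probability. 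The only delicate points are the bookkeeping of the splitting at $k=\lfloor t\rfloor$ and using negativity of the drift only in the direction where it helps.
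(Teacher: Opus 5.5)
Your proof is correct and is essentially the paper's argument: you split at $k=\lfloor t\rfloor$, drop the drift on $[k,t]$ using $a\ge 0$, control $\sup_{0\le\theta\le k}(Y(k)-Y(\theta))$ uniformly in $k$ by Lemma~\ref{lem:EsItô2bis}, and control the remaining unit-window fluctuation by Lemma~\ref{lem:EsItô3a}. The only difference is bookkeeping: the paper bounds the window term by the reverse-time supremum $\sup_{\theta\in[(t-1)^+,t]}\int_\theta^t b(s)\,\mathrm dW(s)$, whereas your forward suprema $M_k^{\pm}$ over $[k,k+1]$ are exactly of the form Lemma~\ref{lem:EsItô3a} controls, which makes that step fully explicit.
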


\begin{proof}
Let $R > 0$ and let $t \ge 0$. Let $l$ be the greatest integer below $t$. For $t_0 \in [0,l]$ we have that the expression
\begin{equation}\label{eq:EsItô3abis}
Y(t) - Y(t_0) = -\int_{t_0}^t a(s)\,\mathrm ds + \int_{t_0}^t b(s)\,\mathrm dW(s)
\end{equation}
is at most
\begin{equation}-\int_{t_0}^l a(s)\,\mathrm ds + \int_{t_0}^l b(s)\,\mathrm dW(s) + \int_{l}^t b(s)\,\mathrm dW(s),\end{equation}
and for $t_0 \in [l,t]$ we have that \sef{eq:EsItô3abis} is at most
\begin{equation}\int_{t_0}^t b(s)\,\mathrm dW(s) \le \sup_{\th \in [(t-1)^+,t]} \int_{\th}^t b(s)\,\mathrm dW(s),\end{equation}
where $(t-1)^+=t-1$ if $t\ge 1$ and $(t-1)^+=0$ if $t<1$.
As $l\in [(t-1)^+,t]$, the first case yields for every $t_0\in [0,l]$ that \sef{eq:EsItô3abis} is at most
\begin{equation}\label{eq:EsItô3abis-2}
	\sup_{\th \in [0,l]}\left(-\int_{\th}^l a(s)\,\mathrm ds + \int_{\th}^l b(s)\,\mathrm dW(s) \right) + \sup_{\th \in [(t-1)^+,t]} \int_{\th}^t b(s)\,\mathrm dW(s).
\end{equation}
The second case yields that \sef{eq:EsItô3abis} is also below \sef{eq:EsItô3abis-2} for every $t_0\in [l,t]$.
Combining both cases we obtain
\begin{align}
\nonumber&\BP\,\left(\sup_{t_0 \in [0,t]} Y(t) - Y(t_0) \ge R \right)\\
\nonumber&\qquad\le \BP\,\left(\sup_{\th \in [0,l]}\left(-\int_{\th}^l a(s)\,\mathrm ds + \int_{\th}^l b(s)\,\mathrm dW(s) \right) + \sup_{\th \in [(t-1)^+,t]} \int_{\th}^t b(s)\,\mathrm dW(s) \ge R \right)\\
\nonumber&\qquad\le \BP\,\left(\sup_{\th \in [0,l]}\left(-\int_{\th}^l a(s)\,\mathrm ds + \int_{\th}^l b(s)\,\mathrm dW(s)\right) \ge R/2 \right)\\
&\qquad\qquad\qquad +
\BP\,\left(\sup_{\th \in [(t-1)^+,t]} \int_{\th}^t b(s)\,\mathrm dW(s) \ge R/2 \right),
\end{align}
which is below a given $\ep > 0$ for all $t \ge 0$, provided $R$ is chosen big enough, due to Lemma \ref{lem:EsItô2bis} and Lemma \ref{lem:EsItô3a}.
\end{proof}

In the next section, we will also require three estimates on sums of probabilities involving the process $Y$, stated in the Lemmas \ref{lem:EsItô3}, \ref{lem:EsItô4}, and \ref{lem:EsItô5} below. The first two of them are the most delicate ones, yet make use of the following straightforward calculation.

\begin{lemma}\label{lem:E5}
	For every $p,q \in \BR$ with $0 < p < q$ we have
	\begin{equation}\sum_{l=0}^\infty \frac{1}{(pl+q)^2} \le \frac{1}{p(q-p)}.\end{equation}
\end{lemma}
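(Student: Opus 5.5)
The plan is to compare the sum with an integral, using that the summand is decreasing in $l$. Since $0<p<q$, the function
\begin{equation*}
g(x)=\frac{1}{(px+q)^2}
\end{equation*}
is positive and strictly decreasing on $(-1,\infty)$; it is finite there because $px+q>q-p>0$ for $x>-1$.

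\emph{Comparison step.} For every $l\ge 0$, monotonicity of $g$ gives
\begin{equation*}
g(l)\le\int_{l-1}^{l}g(x)\,\mathrm dx .
\end{equation*}
Summing over $l=0,1,2,\dots$ yields
\begin{equation*}
\sum_{l=0}^\infty\frac{1}{(pl+q)^2}\le\int_{-1}^{\infty}\frac{\mathrm dx}{(px+q)^2}.
\end{equation*}

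\emph{Evaluation step.} An antiderivative of $g$ is $-\tfrac{1}{p(px+q)}$. It vanishes as $x\to\infty$, and at $x=-1$ it equals $-\tfrac{1}{p(q-p)}$. Hence
\begin{equation*}
\int_{-1}^{\infty}\frac{\mathrm dx}{(px+q)^2}=\frac{1}{p(q-p)},
\end{equation*}
which is exactly the claimed bound.

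The only point requiring care is placing the comparison interval to the left of each integer, i.e.\ using $[l-1,l]$ rather than $[l,l+1]$. This is what makes the $l=0$ term controlled by the integral over $[-1,0]$, and it is legitimate precisely because $p<q$ keeps $px+q$ positive on $[-1,0]$. There is no real obstacle beyond this.
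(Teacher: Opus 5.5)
Your proposal is correct and is essentially the paper's argument: both bound the sum by the integral of the decreasing summand starting one unit to the left. The only difference is that the paper first factors out $p^{-2}$ and sets $c=q/p>1$, so it bounds $\sum_{l\ge 0}(l+c)^{-2}$ by $\int_{c-1}^\infty x^{-2}\,\mathrm dx = 1/(c-1)$, whereas you integrate $(px+q)^{-2}$ over $[-1,\infty)$ directly.
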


\begin{proof}
	For $c > 1$ we have	\begin{equation}\sum_{l=0}^\infty \frac{1}{(l+c)^2} \le \int_{c-1}^\infty \frac{1}{x^2}\,\mathrm dx = \frac{1}{(c-1)},\end{equation}
yielding
	\begin{equation}
		\sum_{l=0}^\infty \frac{1}{(pl+q)^2}  = \frac{1}{p^2}\sum_{l=0}^\infty \frac{1}{(l+q/p)^2}
		\le \frac{1}{p^2(q/p-1)} =  \frac{1}{p(q-p)},
	\end{equation}
    proving the estimate.
\end{proof}

\begin{lemma}\label{lem:EsItô3}%E6
Let $Y(t) $ be given by \sef{eq:EsItô2} and let $p$ and $C$ be given positive constants. Assume there exist positive constants $\al$, $\lambda$,  and $\si$ such that  
% \todo{niet nodig dat sigma 0 niet 0 mag zijn toch?}
\begin{equation}\label{eq:EsItô6}
a(s) \ge \al\quad\hbox{and}\quad b(s)^2 \le \si^2  \quad\hbox{a.s. for all } s \ge 0
\end{equation}
and
\begin{equation}\al > 32\lambda \si^2.\end{equation}
For every $\ep > 0$ there exists $q > 0$ such that for every $m \in \BN$ we have
\begin{equation}%\label{eq:EsItô10a}
\sum_{l=0}^m \BP\,\left(\sup_{0 \le \th \le m-l} \gb{Y(m-l) - Y(\th)} \ge R_l \right) < \ep,
\end{equation}
where
\begin{equation}R_l = - C + \tfrac{1}{2\lambda}\log \gb{pl+q}.\end{equation}
\end{lemma}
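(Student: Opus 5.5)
The natural route is to apply Lemma \ref{lem:EsItô2bis} termwise, with $l$ replaced by $n:=m-l\in\{0,\dots,m\}$ and $R$ replaced by $R_l$. We may take $q$ large enough that $R_l\ge 0$ for all $l\ge 0$, i.e.\ $\log q\ge 2\lambda C$, so that the lemma applies. The term with $n=0$ is the probability that $0\ge R_l$, which vanishes once $R_l>0$. Summing the two exponential bounds of the lemma then gives
\begin{equation*}
\sum_{l=0}^m \BP\Bigl(\sup_{0\le\th\le m-l}(Y(m-l)-Y(\th))\ge R_l\Bigr)\le \sum_{l=0}^\infty\Bigl(4e^{-R_l^2/(64\si^2)}+K\,e^{-\al R_l/(64\si^2)}\Bigr),
\end{equation*}
with $K=4\bigl(1-e^{-\al^2/(128\si^2)}\bigr)^{-1}$. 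This bound is independent of $m$, so it suffices to show that the infinite series tends to $0$ as $q\to\infty$.

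The second series is the delicate one, and this is where the hypothesis $\al>32\lambda\si^2$ enters. Substituting $R_l=-C+\frac{1}{2\lambda}\log(pl+q)$ gives
\begin{equation*}
e^{-\al R_l/(64\si^2)}=e^{\al C/(64\si^2)}\,(pl+q)^{-\gamma},\qquad \gamma:=\frac{\al}{128\lambda\si^2}.
\end{equation*}
Unfortunately $\al>32\lambda\si^2$ only gives $\gamma>1/4$, not $\gamma\ge 2$, so the constants in Lemma \ref{lem:EsItô2bis} as stated are too weak. This is the main obstacle. I would redo the splitting of Lemma \ref{lem:EsItô2bis} and Lemma \ref{lem:EsItô2} with sharper bookkeeping. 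The clean route is the exponential-martingale bound of Lemma \ref{lem:EsItô1}, applied blockwise with exponent $2\be=\al/\si^2$ rather than through Gaussian tails, which yields tails of the form $e^{-c\al R/\si^2}$ with $c$ close to $2$ in the relevant range. With a sufficiently good $c$ the exponent becomes $c\al/(2\lambda\si^2)\ge 2$ when $\al>32\lambda\si^2$. Once the exponent is at least $2$, $(pl+q)^{-\gamma}\le (pl+q)^{-2}$ for $pl+q\ge1$, and Lemma \ref{lem:E5} gives the series bound $\frac{e^{\al C/(64\si^2)}}{p(q-p)}$, which tends to $0$ as $q\to\infty$. This explains why Lemma \ref{lem:E5} was prepared.

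The first series is easy. For $q$ large, $R_l\ge \frac{1}{4\lambda}\log(pl+q)$, so
\begin{equation*}
e^{-R_l^2/(64\si^2)}\le (pl+q)^{-\log(pl+q)/(1024\lambda^2\si^2)}.
\end{equation*}
The exponent here grows without bound, so for $q$ large it exceeds $2$ uniformly in $l$. Lemma \ref{lem:E5} again bounds the series by a multiple of $1/(p(q-p))$.

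Finally, choose $q$ large enough that both series together are less than $\ep$; this $q$ works for every $m$.
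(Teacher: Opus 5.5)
Your diagnosis is correct, and the paper's own proof makes exactly the error you identify. The paper argues as in your first paragraph, with $\lambda=1/2$. It then bounds the second series by $\sum_l (pl+q)^{-2}$, i.e.\ it uses $\exp\bigl(-\alpha\log(pl+q)/(64\sigma^2)\bigr)\le (pl+q)^{-2}$. That inequality needs $\alpha\ge 128\sigma^2$, equivalently $\alpha\ge 256\lambda\sigma^2$ in general, whereas the hypothesis only gives your $\gamma>1/4$.

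The constant $32$ is what Lemma~\ref{lem:EsItô2} would give, since its tails are $e^{-\alpha R/(8\beta_1^2)}$. That lemma needs $\beta_0>0$, and passing to Lemma~\ref{lem:EsItô2bis} loses a factor $8$ in the exponent. Nothing downstream breaks: in Lemma~\ref{lem:EsMain1} the drift $\beta$ of $U$ is chosen freely, so one could simply assume $\alpha\ge 256\lambda\sigma^2$, and then the direct argument is complete.

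The gap in your proposal is that the repair is only asserted, and it is the step that carries all the weight. You need a tail bound for the \emph{reverse} supremum that is uniform in $n=m-l$. Lemma~\ref{lem:EsItô1} controls forward suprema only. Applying it naively on each block $[k-1,k]$ gives $n$ terms of size $e^{-2\beta R}$, hence a factor $n$. You must reserve part of the drift to get decay in $n-k$.

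Here is how to close it with the paper's tools.
\begin{itemize}
\item For $\theta\in[k-1,k]$ write $Y(n)-Y(\theta)=(Y(n)-Y(k))+(Y(k)-Y(\theta))$, and note that $Y(k)-Y(\theta)\le 2\sup_{s\in[k-1,k]}\bigl|\int_{k-1}^s b\,\mathrm dW\bigr|$.
\item Since $a-\frac{\alpha}{2}\ge\frac{\alpha}{2\sigma^2}b^2$, apply Lemma~\ref{lem:EsItô1} to $t\mapsto Y(k+t)-Y(k)+\frac{\alpha}{2}t$, with the shifted Brownian motion and $\beta=\alpha/(2\sigma^2)$. This gives $\mathbb P(Y(n)-Y(k)\ge x)\le \exp\bigl(-\frac{\alpha}{\sigma^2}(x+\frac{\alpha}{2}(n-k))\bigr)$ for all real $x$.
\item Put $j=n-k$ and split the level as $R=(\frac R2-\frac\alpha4 j)+(\frac R2+\frac\alpha4 j)$. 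Use the previous bullet on the first piece and Lemma~\ref{lem:EsItô3a} (for $\pm b$, with $T=1$) on the second.
\end{itemize}
Summing over $k$ then gives, uniformly in $n$,
\begin{equation*}
\mathbb P\Bigl(\sup_{0\le\theta\le n}\bigl(Y(n)-Y(\theta)\bigr)\ge R\Bigr)\le \frac{e^{-\alpha R/(2\sigma^2)}}{1-e^{-\alpha^2/(4\sigma^2)}}+4e^{-R^2/(256\sigma^2)}\sum_{j\ge 0}e^{-\alpha^2 j^2/(1024\sigma^2)},\qquad R\ge 0.
\end{equation*}
With $R=R_l$, the first term is a constant times $(pl+q)^{-\alpha/(4\lambda\sigma^2)}$, and $\alpha/(4\lambda\sigma^2)>8\ge 2$. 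The second term is handled exactly like your Gaussian series. Lemma~\ref{lem:E5} then finishes the proof as you describe.

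In your notation any $c\ge 1/8$ suffices, so there is no need to push $c$ toward $2$. With exponent $2\beta=\alpha/\sigma^2$ you would in any case get $c\le 1$, not $c$ near $2$.
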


\begin{proof}
We prove the lemma for $\lambda =1/2$. The general case follows by rescaling $Y$.
According to Lemma \ref{lem:EsItô2bis}, we have
% \todo{Lemma 3.4 zodat we ook $\sigma_0$ lekker ook 0 kunnen kiezen}
\begin{align}
\nonumber&\sum_{l=0}^m \BP\,\left(\sup_{0 \le \th \le m-l} \gb{Y(m-l) - Y(\th)} \ge R_l \right)\\
\nonumber&\qquad\le
4\sum_{l=0}^m\exp\gb{-\frac{R_l^2}{64\si^2}} + \frac{4}{1 - \exp\gb{-\frac{\al^2}{128\si^2}}} \sum_{l=0}^m \exp\gb{-\frac{\al R_l}{64\si^2}}\\
\nonumber&\qquad\le 4\sum_{l=0}^m \exp\left(-\frac{C^2 + \log(pl+q)\gb{\log (pl+q) - 2C}}{64\si^2}\right)\\
&\qquad\qquad\quad+\frac{4\exp\gb{\frac{\al C}{64\si^2}}}{1 - \exp\gb{-\frac{\al^2}{128\si^2}}} \sum_{l=0}^m \exp\gb{-\frac{\al \log (pl+q)}{64\si^2}}.
\end{align}
Next choose $q$ so large that $\log q - 2C \ge 128\si^2$ and proceed the estimation by
\begin{align}
\nonumber&\sum_{l=0}^m \BP\,\left(\sup_{0 \le \th \le m-l} \gb{Y(m-l) - Y(\th)} \ge R_l \right)\\
\nonumber&\qquad\le
4\left(1 + \frac{\exp\gb{\frac{\al C}{64\si^2}}}{1 - \exp\gb{-\frac{\al^2}{128\si^2}}}\right) \sum_{l=0}^m \exp\gb{-2 \log (pl+q)}\\
\nonumber&\qquad=
4\left(1 + \frac{\exp\gb{\frac{\al C}{64\si^2}}}{1 - \exp\gb{-\frac{\al^2}{128\si^2}}} \right) \sum_{l=0}^m \frac{1}{(pl+q)^2}\\
&\qquad\le
4\left(1 + \frac{\exp\gb{\frac{\al C}{64\si^2}}}{1 - \exp\gb{-\frac{\al^2}{128\si^2}}} \right) \frac{1}{p(q-p)},
\end{align}
where we have used Lemma \ref{lem:E5}. Summarizing this shows that
\begin{equation}
\sum_{l=0}^m \BP\,\left(\sup_{0 \le \th \le m-l} \gb{Y(m-l) - Y(\th)} \ge R_l \right)\le
4\left(1 + \frac{\exp\gb{\frac{\al C}{64\si^2}}}{1 - \exp\gb{-\frac{\al^2}{128\si^2}}} \right) \frac{1}{p(q-p)},
\end{equation}
which tends to zero as $q \to \infty$.
\end{proof}

Now that we have obtained estimates for $Y(t)$ at integer time points $t=l$, $l \in \BN$, the next step is to relate the process $Y(t)$ to $Y(l)$. In the lemma below, we  allow for positive drift as $a(t)$ may attain negative values.
% \todo{Nog iets opmerken dat hier geen negatieve drift nodig is, of juist positieve drift willen. Beetje in strijd met titel van sectie?!}

\begin{lemma}\label{lem:EsItô4}%E7
Let $Y(t) $ be given by \sef{eq:EsItô2} and $p$, $\lambda$, and $C$  positive constants. Assume that there exist  constants $\al \in \BR$ and $\si \in \BR$ such that 
% \todo{niet nodig dat sigma 0 niet 0 mag zijn toch?}
\begin{equation}
-a(s) \le \al\quad\hbox{and}\quad b(s)^2 \le \si^2\quad  \hbox{a.s. for all } s \ge 0.
\end{equation}
For every $\ep > 0$ there exists $q > 0$ such that for every $m \in \BN$ we have
\begin{equation}%\label{eq:EsItô10a}
\sum_{l=0}^m \BP\,\left(\sup_{m-l-1 \le t \le m-l+1} \gb{Y(t) - Y(m-l-1)} \ge R_l \right) < \ep,
\end{equation}
where
% \todo{in dit lemma, wat is $\lambda?$ $R_l$ heeft niet zoveel hier te maken eigenlijk?}
\begin{equation}R_l = - C + \tfrac{1}{2\lambda}\log (pl+q).\end{equation}
\end{lemma}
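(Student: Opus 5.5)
Since $-a(s)\le\al$, on each window $[m-l-1,m-l+1]$ of length $2$ the drift part contributes at most $2|\al|$ (take $\al^+$). So for $t$ in that window
\begin{equation}
Y(t)-Y(m-l-1) = -\int_{m-l-1}^t a(s)\,\mathrm ds + \int_{m-l-1}^t b(s)\,\mathrm dW(s) \le 2\al^+ + \int_{m-l-1}^t b(s)\,\mathrm dW(s).
\end{equation}
The supremum over the window is therefore bounded by $2\al^+$ plus the running supremum of a stochastic integral started at the deterministic time $t_0=(m-l-1)^+$ over a horizon $T=2$. For $l=m$ the window starts at $-1$; there I would use $Y(t)=0$ for $t\le 0$ (or just start at $t_0=0$), which only shrinks the supremum. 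This reduction is pathwise and needs no negative drift, consistent with the remark before the lemma.

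Next I apply Lemma \ref{lem:EsItô3a} with $t_0=(m-l-1)^+$, $T=2$ and $\beta=\si$. For $R_l-2\al^+\ge 0$ it gives
\begin{equation}
\BP\left(\sup_{m-l-1\le t\le m-l+1}\gb{Y(t)-Y(m-l-1)}\ge R_l\right)\le 2\exp\left(-\frac{(R_l-2\al^+)^2}{32\si^2}\right).
\end{equation}
If $\si=0$ the stochastic integral vanishes, and the probability is $0$ as soon as $R_l>2\al^+$. So I may assume $\si>0$ (or replace $\si$ by any positive upper bound).

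Set $D:=C+2\al^+$ and $L_l:=\frac{1}{2\lambda}\log(pl+q)$, so that $R_l-2\al^+=L_l-D$. First I choose $q$ so large that $\frac{1}{2\lambda}\log q\ge 2D$. Then $L_l-D\ge \frac12 L_l\ge 0$ for all $l$, and hence $(L_l-D)^2\ge \frac14 L_l^2$. Enlarging $q$ further so that $\frac{1}{2\lambda}\log q\ge 128\lambda\si^2$ gives $L_l^2/(128\si^2)\ge \frac{1}{2\lambda\cdot 128\si^2}\cdot128\lambda\si^2\cdot 2\log(pl+q)$, that is,
\begin{equation}
\frac{(R_l-2\al^+)^2}{32\si^2}\ge \frac{L_l^2}{128\si^2}\ge 2\log(pl+q).
\end{equation}
Each term is then at most $2(pl+q)^{-2}$.

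Summing over $l$ and using Lemma \ref{lem:E5} (once $q>p$) bounds the whole sum by $\frac{2}{p(q-p)}$, uniformly in $m$. This tends to $0$ as $q\to\infty$, so it is below $\ep$ for $q$ large. There is no real obstacle. The only care needed is the choice of $q$ so that the Gaussian tail dominates a power $(pl+q)^{-2}$: the bound is quadratic in $\log(pl+q)$, hence eventually beats any fixed multiple of $\log(pl+q)$. Beyond that, one only has to handle the edge window at $l=m$ and the degenerate case $\si=0$.
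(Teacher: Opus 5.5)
Your route is the paper's route: bound the drift on the length-$2$ window by $2\alpha^+$, apply Lemma \ref{lem:EsItô3a} with $T=2$, choose $q$ so that each term is at most $2(pl+q)^{-2}$, and sum with Lemma \ref{lem:E5}. The differences are cosmetic. You keep general $\lambda$ instead of rescaling to $\lambda=\frac12$, and you treat the edge window at $l=m$ and the case $\sigma=0$, which the paper passes over. Your constant $32\sigma^2$ is also the correct one for $T=2$; the paper writes $16\sigma^2$.

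There is, however, an arithmetic error in the decisive step. As written, it breaks the uniformity in $m$. From your threshold $L_l\ge\frac{1}{2\lambda}\log q\ge 128\lambda\sigma^2$ you only get
\[
\frac{L_l^2}{128\sigma^2}\;\ge\;\frac{128\lambda\sigma^2}{128\sigma^2}\,L_l\;=\;\lambda L_l\;=\;\frac12\log(pl+q),
\]
not $2\log(pl+q)$. Even your intermediate expression $\frac{1}{2\lambda\cdot128\sigma^2}\cdot128\lambda\sigma^2\cdot2\log(pl+q)$ only equals $\log(pl+q)$. With exponent $\frac12\log(pl+q)$, each term is bounded only by $2(pl+q)^{-1/2}$. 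The sum over $0\le l\le m$ of these bounds grows like $\sqrt m$, so no single $q$ works for all $m$.

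The repair is the one your closing remark points to: the exponent is quadratic in $L_l$, so just enlarge $q$. For instance, require $\frac{1}{2\lambda}\log q\ge 512\lambda\sigma^2$. Then $\frac{L_l^2}{128\sigma^2}\ge 4\lambda L_l=2\log(pl+q)$, and the rest of your argument goes through unchanged. The paper's own proof has a slip of the same kind. Once its $16\sigma^2$ is corrected to $32\sigma^2$, its threshold $\log q>32\sigma^2+2C+4\alpha$ gives only exponent $\log(pl+q)$, and it is repaired the same way by taking $q$ larger.
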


\begin{proof}
We prove the lemma for $\lambda =1/2$.
The general case again follows by rescaling $Y$.
Clearly, we may assume in the proof that $\al > 0$. Due to Lemma \ref{lem:EsItô3a}, we have
\begin{align}\label{eq:EsItô10ab}
&\BP \left(\sup_{m-l-1 \le t \le m-l+1} - \int_{m-l-1}^t a(s)\,\mathrm ds + \int_{m-l-1}^t b(s)\,\mathrm dW(s) \ge R_l \right)\nonumber\\
&\qquad\le
\BP \left(\sup_{m-l-1 \le t \le m -l+1} \int_{m-l-1}^t b(s)\,\mathrm dW(s) \ge R_l - 2 \al\right)\nonumber\\
&\qquad\le
2\exp\gb{-\frac{(R_l-2\al)^2}{16\si^2}}\nonumber\\
&\qquad = 2\exp\gb{-\frac{(-C-2\al + \log (pl+q))^2}{16\si^2}}\nonumber\\
&\qquad\le
2\exp\gb{-\frac{(C+2\al)^2 + \log(pl+q)\gb{\log (pl+q) - 2C -4\al}}{16\si^2}}.
\end{align}
Choose $q$ so large that $\log q > 32\si^2 + 2C + 4\al$ holds, sum the inequality \sef{eq:EsItô10ab} from $l=0$ to $m$, and use Lemma \ref{lem:E5} to arrive at
\begin{align}
\nonumber&\sum_{l=0}^m \BP \left(\sup_{m-l-1 \le t \le m-l+1} - \int_{m-l-1}^t a(s)\,\mathrm ds + \int_{m-l-1}^t b(s)\,\mathrm dW(s) \ge R_l \right)\\
\nonumber &\qquad\qquad\le
2\sum_{l=0}^m \exp\gb{-2\log (pl+q)}\\
&\qquad\qquad = 2\sum_{l=0}^m \frac{1}{(pl+q)^2} \le \frac{2}{p(q-p)},
\end{align}
which tends to $0$ as $q \to \infty$.
\end{proof}
    In view of the proof of the main theorem in Section \ref{sec:proof}, we  encounter a process $Y(t)$ which has a positive drift term. We may hence apply Lemmas \ref{lem:EsItô4} and \ref{lem:EsItô5} to it. Carefully observe the minus sign within \eqref{eq:minus} in front of the difference $Y(m+1)-Y(t)$. To remain in the spirit of this section, we  reformulate Lemma \ref{lem:EsItô5} in terms of processes $Y$ with negative drift, which is done in Corollary \ref{cor:reformulate}. After all,  $Y$ is an It\^o process with negative drift if and only if $-Y$ is one with   positive drift.

\begin{lemma}\label{lem:EsItô5}%E8
Let $Y(t) $ be given by \sef{eq:EsItô2} and $p>0$ a  positive constant. Assume that there exists a constant $\si \in \BR$ such that
% \todo{niet nodig dat sigma 0 niet 0 mag zijn toch? Hier dus niet-negatieve drift, correct? Ik zou dit wel duidelijk opmerken, hier kwam ik pas later achter.}
\begin{equation}
a(s) \le 0\quad\hbox{and}\quad b(s)^2 \le \si^2\quad  \hbox{a.s. for all } s \ge 0.
\end{equation}
For every $\ep > 0$ there exists $q > 0$ such that for every $m \in \BN$ we have
\begin{equation}%\label{eq:EsItô10a}
\sum_{l=0}^m \BP\,\left(\sup_{m-l \le t \le m-l+1} -\gb{Y(m+1) - Y(t)} \ge pl+q \right) < \ep.\label{eq:minus}
\end{equation}
\end{lemma}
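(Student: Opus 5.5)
Since $a(s)\le 0$, the drift of $Y$ is nonnegative, so for $t\le m+1$ we have
\begin{equation*}
-\gb{Y(m+1)-Y(t)} = \int_t^{m+1} a(s)\,\mathrm ds - \int_t^{m+1} b(s)\,\mathrm dW(s) \le -\int_t^{m+1} b(s)\,\mathrm dW(s).
\end{equation*}
The drift can therefore be discarded entirely and only the martingale part has to be controlled. For $t\in[m-l,m-l+1]$ I split
\begin{equation*}
-\int_t^{m+1} b\,\mathrm dW = -\int_{m-l}^{m+1} b\,\mathrm dW + \int_{m-l}^{t} b\,\mathrm dW .
\end{equation*}
This gives
\begin{equation*}
\sup_{m-l\le t\le m-l+1} -\gb{Y(m+1)-Y(t)} \le \Bigl|\int_{m-l}^{m+1} b\,\mathrm dW\Bigr| + \sup_{m-l\le t\le m-l+1}\int_{m-l}^{t} b\,\mathrm dW .
\end{equation*}
Hence the $l$-th probability is at most
\begin{equation*}
\BP\Bigl(\Bigl|\int_{m-l}^{m+1} b\,\mathrm dW\Bigr|\ge \tfrac{pl+q}{2}\Bigr)+\BP\Bigl(\sup_{m-l\le t\le m-l+1}\int_{m-l}^{t} b\,\mathrm dW\ge \tfrac{pl+q}{2}\Bigr).
\end{equation*}

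The second term is handled by Lemma \ref{lem:EsItô3a} with $t_0=m-l$ and $T=1$. It is bounded by $2\exp\gb{-(pl+q)^2/(64\si^2)}$, which is summable in $l$ uniformly in $m$ and tends to $0$ as $q\to\infty$.

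For the first term the interval has length $l+1$, which grows, but the threshold grows linearly in $l$, so Gaussian tails still suffice. I would apply Lemma \ref{lem:EsItô3a} with $t_0=m-l$ and $T=l+1$, once to $b$ and once to $-b$ (the supremum dominates the endpoint value). This gives the bound
\begin{equation*}
4\exp\gb{-\frac{(pl+q)^2}{64\si^2(l+1)}}.
\end{equation*}
Alternatively one can use Chebyshev with the Itô isometry, or Lemma \ref{lem:stochineq}(d) with $p=4$, to get a bound of order $\si^4(l+1)^2/(pl+q)^4$. Either way, summability in $l$ follows from $(pl+q)^2/(l+1)\ge \min(p,q)\,(pl+q)/2$ (or a similar elementary bound), so each term is at most $\exp\gb{-c(pl+q)}$ for a $c>0$ depending only on $p$ and $\si$.

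Summing over $l=0,\dots,m$ and extending to $l=\infty$ then gives a bound independent of $m$ that is a geometric-type series. This series tends to $0$ as $q\to\infty$ by dominated convergence, or explicitly because it is bounded by $e^{-cq}/(1-e^{-cp})$. Choosing $q$ large then finishes the proof. The only mild obstacle is this first term: one must check that the long-interval Gaussian tail $\exp(-(pl+q)^2/(l+1))$ remains summable uniformly in $m$ and small for large $q$, and that reduces to the elementary inequality above.
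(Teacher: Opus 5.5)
Your argument is correct and is essentially the paper's proof step for step. You drop the nonpositive drift, split $-\int_t^{m+1} b\,\mathrm dW$ at $m-l$, and apply Lemma \ref{lem:EsItô3a} on intervals of length $1$ and $l+1$ to get a bound of order $\exp\bigl(-(pl+q)^2/(64\si^2(l+1))\bigr)$; your absolute value only changes the constant. You then sum a geometric-type series, with your lower bound on $(pl+q)^2/(l+1)$ playing the role of the paper's $k/(k+1)\ge 1/2$.

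One caveat on your moment-based aside: the second moment from the Itô isometry alone gives only an $O(1/l)$ bound, which is not summable. That route therefore needs fourth moments (e.g.\ Lemma \ref{lem:stochineq}(d)), and its summability comes from Lemma \ref{lem:E5}, not from an exponential bound.
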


\begin{proof}
Due to Lemma \ref{lem:EsItô3a}, we have for $q > 0$ the inequality
% \todo{Er stond hier een min foutje bij a en de constantes klopte niet.}
{\allowdisplaybreaks
\begin{align}
\nonumber&\BP\left(\sup_{m-l \le t \le m-l+1} -\gb{Y(m+1) - Y(t)} \ge pl+q \right)\\
\nonumber&\qquad\qquad\le 
\BP\left(\sup_{m-l \le t \le m-l+1} \int_t^{m+1} a(s)\,\mathrm ds - \int_t^{m+1} b(s)\,\mathrm dW(s) \ge pl+q\right)\\
\nonumber&\qquad\qquad\le 
\BP\left(\sup_{m-l \le t \le m-l+1}  - \int_t^{m+1} b(s)\,\mathrm dW(s) \ge pl+q\right)\\
\nonumber&\qquad\qquad\le 
\BP\left(\sup_{m-l \le t \le m-l+1}  - \int_{m-l}^{m+1} b(s)\,\mathrm dW(s) + \int_{m-l}^t b(s)\,\mathrm dW(s) \ge pl+q\right)\\
\nonumber&\qquad\qquad\le
\BP\left(- \int_{m-l}^{m+1} b(s)\,\mathrm dW(s) \ge \frac{1}{2}(pl+q) \right)\\
\nonumber&\qquad\qquad\qquad\qquad + 
\BP\left(\sup_{m-l \le t \le m-l+1} \int_{m-l}^t b(s)\,\mathrm dW(s) \ge \frac{1}{2}(pl+q) \right)\\
\nonumber&\qquad\qquad\le 
4\exp\gb{-\frac{(pl+q)^2}{64\si^2(l+1)}}\\
&\qquad\qquad = 
4\exp\gb{-\frac{q^2+2pql+p^2l^2}{64\si^2(l+1)}}.
\end{align}}
Summation from $l=0$ to $m$ yields
\begin{align}
\nonumber&\sum_{l=0}^m \BP\left(\sup_{m-l \le t \le m-l+1} -\gb{Y(m+1) - Y(t)} \ge pl+q \right)\\
\nonumber&\qquad\qquad\le 
4\exp\gb{-\frac{q^2}{64\si^2}} + \sum_{l=1}^m 4\exp\gb{-\frac{2pql}{64\si^2(l+1)}} \exp\gb{-\frac{p^2l^2}{64\si^2(l+1)}}\\
\nonumber&\qquad\qquad\le 
4\exp\gb{-\frac{q^2}{64\si^2}} + \sum_{l=1}^m 4\exp\gb{-\frac{pq}{64\si^2}} \exp\gb{-\frac{p^2l}{128\si^2}}\\
&\qquad\qquad\le 
4\exp\gb{-\frac{q^2}{64\si^2}} + 4\exp\gb{-\frac{pq}{64\si^2}}\frac{1}{1 - \exp\gb{-\frac{p^2}{128\si^2}}},
\end{align}
which tends to $0$ as $q \to \infty$.
\end{proof}
\begin{corollary}\label{cor:reformulate}%E8
Let $Y(t) $ be given by \sef{eq:EsItô2} and $p>0$ a  positive constant. Assume that there exists a constant $\si \in \BR$ such that
% \todo{niet nodig dat sigma 0 niet 0 mag zijn toch? Hier dus niet-negatieve drift, correct? Ik zou dit wel duidelijk opmerken, hier kwam ik pas later achter.}
\begin{equation}
a(s) \ge 0\quad\hbox{and}\quad b(s)^2 \le \si^2\quad  \hbox{a.s. for all } s \ge 0.
\end{equation}
For every $\ep > 0$ there exists $q > 0$ such that for every $m \in \BN$ we have
\begin{equation}%\label{eq:EsItô10a}
\sum_{l=0}^m \BP\,\left(\sup_{m-l \le t \le m-l+1} \gb{Y(m+1) - Y(t)} \ge pl+q \right) < \ep. 
\end{equation}
\end{corollary}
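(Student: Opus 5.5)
The plan is to reduce Corollary \ref{cor:reformulate} to Lemma \ref{lem:EsItô5} by passing from $Y$ to $-Y$, as announced just before Lemma \ref{lem:EsItô5}.

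Put $\wtilde Y:=-Y$. Then
\begin{equation*}
\wtilde Y(t)=-\int_0^t \tilde a(s)\,\mathrm ds+\int_0^t \tilde b(s)\,\mathrm dW(s),\qquad \tilde a:=-a,\quad \tilde b:=-b .
\end{equation*}
So $\wtilde Y$ is again of the form \sef{eq:EsItô2}. Measurability and adaptedness are inherited from $a$ and $b$. The integrability conditions also carry over: $\int_0^t|\tilde a|\,\mathrm ds=\int_0^t|a|\,\mathrm ds<\infty$ a.s. and $\BE\int_0^t\tilde b^2\,\mathrm ds=\BE\int_0^t b^2\,\mathrm ds<\infty$.

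Next we check the hypotheses of Lemma \ref{lem:EsItô5} for $\wtilde Y$. From $a(s)\ge 0$ we get $\tilde a(s)\le 0$. From $b(s)^2\le\si^2$ we get $\tilde b(s)^2=b(s)^2\le\si^2$. Both hold a.s. for all $s\ge0$.

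Lemma \ref{lem:EsItô5} then provides, for the given $p>0$ and $\ep>0$, a constant $q>0$ such that for every $m\in\BN$
\begin{equation*}
\sum_{l=0}^m \BP\left(\sup_{m-l\le t\le m-l+1} -\bigl(\wtilde Y(m+1)-\wtilde Y(t)\bigr)\ge pl+q\right)<\ep .
\end{equation*}
Since $-\bigl(\wtilde Y(m+1)-\wtilde Y(t)\bigr)=Y(m+1)-Y(t)$ pathwise, the events in this sum coincide with those in the corollary. This gives the claim with the same $q$.

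There is no real obstacle here. The only point needing care is the sign bookkeeping: $\wtilde Y$ must be written in the convention \sef{eq:EsItô2}, where the drift enters with a minus sign, so that the hypothesis of the lemma reads $\tilde a\le 0$ and the required condition is exactly $a\ge 0$.
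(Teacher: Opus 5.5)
Your proposal is correct and is exactly the argument the paper intends. The paper gives no separate proof, only the remark that $Y$ has negative drift if and only if $-Y$ has positive drift; in other words, the corollary is the preceding lemma applied to $-Y$, with drift coefficient $-a\le 0$ and noise coefficient $-b$, and your sign bookkeeping, including the identity $-\bigl(\wtilde Y(m+1)-\wtilde Y(t)\bigr)=Y(m+1)-Y(t)$, is right.
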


\section{Proof of main theorem}\label{sec:proof}
In this section we prove Theorem \ref{thm:Wrights} for the stochastic transformed Wright equation \eqref{eq:stoch-t-Wrights},
\begin{equation}
\mathrm dx(t) = -r\gb{e^{x(t-1)} - 1}\,\mathrm dt - a(x_t)\,\mathrm dt + b(x_t)\,\mathrm dW(t).
\end{equation} 
It turns out that our proof actually applies to a class of equations, which includes \eqref{eq:stoch-t-Wrights}.
Let $G:\mathbb R\to\mathbb R$ be a continuous function such that there exist $\gamma_0 \in \mathbb R$, $\gamma > 0$ and $\lambda > 0$  such that
\begin{equation}\label{as:G1}
0 \le G(x)\le \gamma_0 + \gamma e^{\lambda x} \mbox{ for all } x \in \mathbb R
\end{equation}
and
\begin{equation}\label{as:G2}
\lim_{x \to -\infty} G(x) = 0 \quad\mbox{ and }\quad \lim_{x \to \infty} G(x) = \infty.
\end{equation}
The  stochastic transformed Wright equation \eqref{eq:stoch-t-Wrights} is of the form
\begin{equation}\label{eq:genSTW}
\mathrm{d}z(t)=-G(z(t-1))\,\mathrm{d}t + (r-a(z_t))\,\mathrm{d}t + b(z_t)\,\mathrm{d}W(t),
\end{equation}
where $G(x)=re^x$, $0\le a(u)\le\alpha$, and $b(u)^2\le \beta^2$ for all $u\in C[-1,0]$, for some positive constants $\alpha$ and $\beta$. 
% Moreover, we have $r>\alpha_1$.
Due to Proposition \ref{lem:globlexistencepuredelay}, we have that for every $\mathcal{F}_0$-measurable random variable $\varphi$ in $C[-1,0]$, there exists a unique global solution $z$ of \eqref{eq:genSTW} with $z_0=\varphi$.

Our estimates in this section hold for processes satisfying the even more general equation 
% \todo{De twee a's zijn wel net verschillend, maakt het een beetje verwarrend, maar ik heb geen goede suggestie.}
% \todo{Ik heb hier $-a(t)$ van gemaakt. Nodig in Theorem 4.2.}
\begin{equation}\label{eq:genlized-transformed-stoch-Wright}%\label{eq:EsItô11}
		\mathrm dz(t) = -G(z(t-1))\,\mathrm dt +a(t)\,\mathrm dt + b(t)\,\mathrm dW(t), \qquad t \ge 0,
\end{equation}
where $(a(t))_{t\ge 0}$ and $(b(t))_{t\ge 0}$ are measurable and adapted processes satisfying $\alpha_0\le a(t)\le\alpha_1$, and $b(t)^2\le \si^2$ for all $t\ge 0$ for some positive constants $\alpha_0$, $\alpha_1$, and $\si$. We deal with system \eqref{eq:genlized-transformed-stoch-Wright} in Lemmas \ref{thm:upper-Wright-type}--\ref{lem:EsMain1}. This is followed by Theorem \ref{thm:Wrights2}, where we consider the system \eqref{eq:genSTW} and exploit the previous lemmas.
% , and $(b(t))_{t\ge 0}$ is adapted. \todo{Hier is al geen $\sigma_0.$}

But first, we start with a result on a general class of integral delay equations, which allows us to estimate solutions from above by increments of the driving process. It is similar to \cite[Lem. 5.3]{vandenbosch2026b}.

\begin{lemma}\label{lem.zbddabove}
	Let $f\colon \mathbb{R}\times\mathbb{R}\to\mathbb{R}$ be continuous and assume that there exist constants $x_0,\delta,c\ge 0$ such that
	\begin{equation} \begin{array}{rll}  &f(x,y)\ge\delta &\mbox{ for all }x,y\in [x_0,\infty),\mbox{ and}\\
		&f(x,y)\ge -c&\mbox{ for all }x,y\in\mathbb{R}.
	\end{array}	\end{equation}
	Let $t_0\ge 0$ and suppose $d,u\colon [t_0,\infty)\to\mathbb{R}$ are continuous functions with $d(t)\le \delta$ for all $t\ge t_0$. If $z\colon [t_0-1,\infty)\to\mathbb{R}$ is continuous with $z(t_0)\le x_0$ and satisfies  
	\begin{equation}z(t)=z(t_0)+\int_{t_0}^t\left[-f(z(s-1),z(s))+d(s)\right]\mathrm ds +u(t)-u(t_0),\quad t\ge t_0,\end{equation}
	then for every $t\ge t_0$ there exists $a^t\in [t_0,t]$ such that 
	\begin{equation}z(t)\le \max\{x_0, x_0+c+\delta+u(t)-u(a^t)\}.\label{eq:uppboundz0}\end{equation}
\end{lemma}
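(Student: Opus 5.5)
The plan is a purely pathwise argument: fix $t\ge t_0$ and go back to the last time before $t$ at which $z$ was at or below the level $x_0$. Once the trajectory stays above $x_0$ for more than one time unit, both arguments of $f$ lie in $[x_0,\infty)$. Then $f\ge\delta\ge d$, so the drift part of the equation is nonpositive, and only the first unit of time after leaving $x_0$ (where we can only use $f\ge -c$) contributes a bounded amount.

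Concretely, fix $t\ge t_0$. If $z(t)\le x_0$, we take $a^t=t$ and \eqref{eq:uppboundz0} holds trivially. Otherwise we define
\begin{equation*}
s^\ast:=\sup\{s\in[t_0,t]\colon z(s)\le x_0\},
\end{equation*}
which is well defined because $z(t_0)\le x_0$. By continuity, $z(s^\ast)=x_0$, $s^\ast<t$, and $z(s)>x_0$ for all $s\in(s^\ast,t]$. We will take $a^t:=s^\ast$.

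We then split into two cases.

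In the first case, $t\le s^\ast+1$. Writing the integral equation between $s^\ast$ and $t$ and using $-f\le c$ and $d\le\delta$, we get
\begin{equation*}
z(t)\le x_0+(c+\delta)(t-s^\ast)+u(t)-u(s^\ast)\le x_0+c+\delta+u(t)-u(s^\ast).
\end{equation*}

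In the second case, $t>s^\ast+1$, and we split the interval at $s^\ast+1$. On $[s^\ast,s^\ast+1]$ the same crude bound gives
\begin{equation*}
z(s^\ast+1)\le x_0+c+\delta+u(s^\ast+1)-u(s^\ast).
\end{equation*}
For $s\in[s^\ast+1,t]$ both $s-1$ and $s$ lie in $[s^\ast,t]$, so $z(s-1),z(s)\ge x_0$. Hence $-f(z(s-1),z(s))+d(s)\le-\delta+\delta=0$, and therefore
\begin{equation*}
z(t)\le z(s^\ast+1)+u(t)-u(s^\ast+1).
\end{equation*}
Adding the two estimates, the $u(s^\ast+1)$ terms cancel, leaving $z(t)\le x_0+c+\delta+u(t)-u(s^\ast)$.

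The only delicate point is choosing $a^t$ to be the same time $s^\ast$ in both cases, and checking that $z(s-1)\ge x_0$ on $[s^\ast+1,t]$. The latter uses $z(s^\ast)=x_0$ together with $z>x_0$ on $(s^\ast,t]$. Everything else is a direct estimate of the integral.
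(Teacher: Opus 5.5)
Your proof is correct and follows essentially the same route as the paper. Both take $a^t$ to be the last time before $t$ with $z\le x_0$, use the crude bound $-f+d\le c+\delta$ on the first unit of time after $a^t$, and use $f\ge\delta\ge d$ on the remainder. Your explicit handling of the trivial case $z(t)\le x_0$ (with $a^t=t$) and of $z(s^\ast)=x_0$ only makes precise what the paper leaves implicit.
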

\begin{proof}
	Fix $t\in [t_0,\infty)$. We have to deal with the case $z(t)>x_0$. Let
	\begin{equation}a^t=\sup\{s\in [t_0,t)\colon\, z(s)\le x_0\}.\end{equation}
	By continuity of $z$ we have $z(a^t)\le x_0$. If $t-a^t<1$, then
	\begin{align*}
		z(t)&= z(a^t) - \int_{a^t}^t f(z(s-1),z(s))\,\mathrm ds + \int_{a^t}^t d(s)\,\mathrm ds +u(t)-u(a^t)\\
		&\le x_0 +c +\delta +u(t)-u(a^t).\yesnumber
	\end{align*}
	Otherwise, we have $t-a^t\ge 1$. Then $z(s)\ge x_0$ for all $s\in [a^t,t]$, so $f(z(s-1), z(s))\ge \delta\ge d(s)$ for all $s\in [a^t+1,t]$, hence 
	\begin{align*}z(t)&=z(a^t)-\int_{a^t}^{a^t+1}f(z(s-1),z(s))\,\mathrm ds+\int_{a^t}^{a^t+1} d(s)\,\mathrm ds \\
		& \qquad -\int_{a^t+1}^t \left[f(z(s-1),z(s))- d(s)\right]\mathrm ds +u(t)-u(a^t)\\
		&\le x_0+c+\delta + u(t)-u(a^t),\yesnumber
	\end{align*}
    which shows the upper bound \eqref{eq:uppboundz0}.
\end{proof}

The main use of Lemma \ref{lem.zbddabove} is in Lemma \ref{lem:EsMain1}, where we show that solutions of \eqref{eq:genlized-transformed-stoch-Wright} are bounded below in probability. It also yields boundedness above in probability, as we show next. Note that there are several alternative ways of showing that solutions are bounded above in probability; see, e.g., \cite[Prop. 3.5]{vandenbosch2026b} which uses an idea of \cite{wang-wang-chen2019}. We do not need a lower bound on $a(t)$ here.
% \todo{Oorsponkelijk werd u nu gedefinieerd zonder een negative drift... Bewijs was dus incorrect, maar simpele fix. Nu correct zo? Ik gebruik alleen $a(t)\leq \alpha_1.$}

\begin{lemma}\label{thm:upper-Wright-type}
Consider the equation \eqref{eq:genlized-transformed-stoch-Wright}, where $ a(t)\le\alpha_1$ and $b(t)^2\le \si^2$ for all $t\ge 0$, for some positive constants   $\alpha_1$  and $\si$. Then every solution $z$ is bounded above in probability.
\end{lemma}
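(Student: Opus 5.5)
We apply Lemma \ref{lem.zbddabove} pathwise, with the driving function chosen so that the supremum in \eqref{eq:uppboundz0} is a reverse-time supremum of an Itô process with negative drift. Corollary \ref{col:EsItô3abis} then controls that supremum.

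Since $G(x)\to\infty$ as $x\to\infty$, we can choose $x_0\ge 0$ with $G(x)\ge 2\alpha_1+1$ for all $x\ge x_0$. Set $\delta:=\alpha_1+1$ and $f(x,y):=G(x)-\alpha_1-1$. Then $f(x,y)\ge\delta$ for $x\ge x_0$, and $f\ge -c$ everywhere with $c:=\alpha_1+1$, because $G\ge 0$. Write the equation \eqref{eq:genlized-transformed-stoch-Wright} as
\begin{equation*}
z(t)=z(t_0)+\int_{t_0}^t\bigl[-f(z(s-1),z(s))+\delta\bigr]\,\mathrm ds+u(t)-u(t_0),
\end{equation*}
where
\begin{equation*}
u(t):=-\int_0^t\bigl(\alpha_1+1-a(s)\bigr)\,\mathrm ds+\int_0^t b(s)\,\mathrm dW(s)=Y(t).
\end{equation*}
This is an Itô process of the form \eqref{eq:EsItô2} with drift coefficient $\alpha_1+1-a(s)\ge 1$ and $b(s)^2\le\si^2$. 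We take $d\equiv\delta$, which is admissible.

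The lemma needs $z(t_0)\le x_0$, so we handle the initial value separately. Define $\tau:=\inf\{t\ge0\colon z(t)\le x_0\}$. For $t<\tau$ we have $z(s)>x_0$ on $[0,t]$, so for $t\ge 1$ the drift on $[1,t]$ is at most $-(\alpha_1+1)$. This gives
\begin{equation*}
z(t)\le z(0)+\sup_{s\le 1}\bigl(\text{increment}\bigr)+\alpha_1+Y(t)-Y(1),
\end{equation*}
where every term except $Y(t)-Y(1)$ is bounded in probability by path continuity on $[0,1]$. The term $Y(t)-Y(1)$ is bounded above in probability uniformly in $t$ by Corollary \ref{col:EsItô3abis}. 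The same bound also holds when $z(0)\le x_0$.

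Alternatively, we can avoid stopping times and apply the proof idea of Lemma \ref{lem.zbddabove} directly from $t_0=0$, replacing $x_0$ by the random level $\max\{x_0,\|z_0\|\}$, which is finite a.s. The proof of Lemma \ref{lem.zbddabove} only uses $z(t_0)\le x_0$ and $f\ge\delta$ above the level. Raising the level keeps $f\ge\delta$ above it, because $G\ge 2\alpha_1+1$ there too. With this level the lemma gives, for all $t\ge0$,
\begin{equation*}
z(t)\le \max\{x_0,\|z_0\|\}+c+\delta+\sup_{0\le\theta\le t}\bigl(Y(t)-Y(\theta)\bigr).
\end{equation*}
This version is cleaner, and it is the one we would use.

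Corollary \ref{col:EsItô3abis} applies to $Y$, since its drift coefficient is at least $1>0$ and $b(s)^2\le\si^2$. Hence the family $\bigl(\sup_{0\le\theta\le t}(Y(t)-Y(\theta))\bigr)_{t\ge0}$ is bounded above in probability. The random variable $\|z_0\|$ is a single a.s.\ finite variable, so it is tight. The sum of two families that are bounded above in probability is again bounded above in probability: bound each term at level $\varepsilon/2$. On $[-1,0]$ we have $z\le\|z_0\|$. Together these show that $(z(t))_{t\ge-1}$ is bounded above in probability.

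The main subtlety is that Lemma \ref{lem.zbddabove} is a pathwise statement while $u$ is defined only up to a null set. We therefore apply it $\omega$ by $\omega$ on the full-measure set where paths are continuous and the integral identity holds. The resulting pathwise inequality then transfers directly to the probability statement.
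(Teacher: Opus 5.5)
Your route is the paper's: you apply Lemma~\ref{lem.zbddabove} pathwise, driven by an It\^o process whose drift coefficient is bounded below by a positive constant. You then control the reverse-time supremum with Corollary~\ref{col:EsItô3abis}. The paper uses $U(t)=-\int_0^t(2\alpha_1-a(s))\,\mathrm ds+\int_0^t b(s)\,\mathrm dW(s)$ with $f=G-r$, $\delta=2\alpha_1$, $c=r$. Replacing $x_0$ by the $\omega$-dependent level $\max\{x_0,\|z_0\|\}$ is legitimate, because Lemma~\ref{lem.zbddabove} is deterministic. It also supplies the hypothesis $z(t_0)\le x_0$ at $t_0=0$, which the paper's proof never verifies.

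The constants in your setup, however, do not fit together as written. With $f(x,y)=G(x)-\alpha_1-1$, $d\equiv\delta=\alpha_1+1$ and your $u$, the integrand $-f+\delta$ plus the drift of $u$ equals $-G(z(s-1))+a(s)+\alpha_1+1$. So the right-hand side of your identity exceeds $z(t)$ by $(\alpha_1+1)(t-t_0)$. Moreover, $G\ge 2\alpha_1+1$ above $x_0$ only gives $f\ge\alpha_1<\delta$ there, so the hypothesis of the lemma fails. You have counted the shift by $\alpha_1+1$ twice.

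There are two ways to repair this:
\begin{enumerate}
\item Take $f=G$ and $c=0$. Then $d\equiv\delta$ is correct and $G\ge\alpha_1+1$ above $x_0$ suffices.
\item Keep your $f$ and $c$, but take $d\equiv 0$ and choose $x_0$ with $G\ge 2\alpha_1+2$ above it.
\end{enumerate}
With either repair the bound $z(t)\le\max\{x_0,\|z_0\|\}+c+\delta+\sup_{0\le\theta\le t}(Y(t)-Y(\theta))$ holds, and the rest of your argument goes through unchanged.
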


\begin{proof}
	Define \begin{equation}U(t)=-2\alpha_1t+\int_0^t a(s)\, \mathrm ds+\int_0^t b(s)\, \mathrm dW(s)\\
    =-\int_0^t [2\alpha_1-a(s)]\, \mathrm ds+\int_0^t b(s)\, \mathrm dW(s),  \end{equation}  for $t\ge 0.$ Since $\lim_{x\to \infty} G(x)=\infty$, there exists $x_0\in\mathbb{R}$ such that $G(x)\ge r+2\alpha_1$ for all $x\ge x_0$. Due to Lemma \ref{lem.zbddabove} with $f(x,y)=G(x)-r$, $\delta=2\alpha_1$, and $c=r$, we have 
	\begin{equation}z(t)\le \min\left\{x_0,x_0+r+2\alpha_1+\sup_{0\le\theta\le t} (U(t)-U(\theta))\right\},\end{equation}
	for every $t\ge 0$. By Corollary \ref{col:EsItô3abis}, we find that the  process $(\sup_{0\le \theta\le t}(U(t)-U(\theta)))_{t\ge 0}$ is bounded above in probability and, therefore, $z=(z(t))_{t\ge 0}$ is bounded above in probability. 
\end{proof}

Once a solution $z$ of \eqref{eq:genlized-transformed-stoch-Wright} is bounded in probability at integer times, the next lemma says it is bounded in probability at all times. Theorem \ref{thm:fromboundedtoinvarmeas} then yields that the process $(\|z_t\|)_{t\ge 0}$ is  bounded in probability as well. Since our proof of the lemma shows that conclusion without extra effort, we will not invoke Theorem \ref{thm:fromboundedtoinvarmeas} at this point.

% \todo{Hier stond drie keer $(\|z_t\|)_{t\ge 1}$; ik heb 1 veranderd naar 0.}

\begin{lemma}\label{lem:EsMainSegment}
	Let $G:\mathbb R\to\mathbb R$ be a continuous function satisfying \eqref{as:G1} and \eqref{as:G2}. Consider the stochastic delay equation
	\eqref{eq:genlized-transformed-stoch-Wright}. 
	Assume that there are positive constants $\alpha_1$ and $\si$ such that
	\begin{equation}0 \le a(t) \le \alpha_1\quad\mbox{and}\quad  b(t)^2 \le \si^2\qquad\hbox{for all } t \ge 0.\end{equation}
	If $z$ is a solution of \eqref{eq:genlized-transformed-stoch-Wright} such that $(z(m))_{m\in\mathbb{N}}$ is bounded in probability, then $(\|z_t\|)_{t\ge 0}$ is bounded in probability.
\end{lemma}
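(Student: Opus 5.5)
Fix $\ep>0$; we look for $R>0$ with $\BP(\|z_t\|\le R)\ge 1-\ep$ for all $t\ge 0$. Take $t\ge 1$ and let $m=\lfloor t\rfloor$. The segment $z_t$ only involves values on $[t-1,t]\subseteq[m-1,m+1]$, so it is enough to control $\sup_{s\in[m-1,m+1]}|z(s)|$ uniformly in $m$. For $t\in[0,1]$ the segments depend on the path over the fixed interval $[-1,2]$. A continuous process is bounded in probability on a compact time interval, as noted in the introduction, so this case is harmless.

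\emph{Upper bound.} Lemma \ref{thm:upper-Wright-type} gives the pointwise bound
\begin{equation*}
z(s)\le \max\Bigl\{x_0,\;x_0+r+2\alpha_1+\sup_{0\le\theta\le s}\bigl(U(s)-U(\theta)\bigr)\Bigr\}
\end{equation*}
for all $s$, where $U(s)=-\int_0^s[2\alpha_1-a(u)]\,\mathrm du+\int_0^s b(u)\,\mathrm dW(u)$ has negative drift at least $\alpha_1$. Take $s\in[m-1,m+1]$. Then $\sup_{\theta\le s}(U(s)-U(\theta))$ is at most $\sup_{\theta\le m-1}(U(m-1)-U(\theta))$ plus the term $\sup_{s\in[m-1,m+1]}(U(s)-U(m-1))^+$. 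The first term is bounded above in probability by Corollary \ref{col:EsItô3abis}. The second is at most $\sup_{s\in[m-1,m+1]}\int_{m-1}^s b\,\mathrm dW$, which is controlled uniformly in $m$ by Lemma \ref{lem:EsItô3a}. Hence $\sup_{s\in[m-1,m+1]}z(s)$ is bounded above in probability. This step does not use the hypothesis on integer times.

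\emph{Lower bound.} Since $G\ge 0$ and $a\ge 0$, for $s\in[m-1,m+1]$ we have
\begin{equation*}
z(s)\ge z(m-1)-\int_{m-1}^{s}G(z(u-1))\,\mathrm du+\int_{m-1}^s b\,\mathrm dW .
\end{equation*}
The stochastic integral term is bounded below in probability uniformly in $m$, again by Lemma \ref{lem:EsItô3a} applied to $-b$. The value $z(m-1)$ is bounded below in probability by hypothesis. The drift term requires an upper bound on $G(z(u-1))$ for $u-1\in[m-2,m]$. By \eqref{as:G1} it is at most $2(\gamma_0+\gamma e^{\lambda S})$, where $S=\sup_{[m-2,m]}z$. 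By the upper-bound step applied with $m-1$ in place of $m$, $S$ is bounded above in probability, and therefore so is the integral. This gives the lower bound for all $m\ge 2$; the cases $m\le 1$ again fall under the compact-interval argument.

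Combining the two bounds shows that $\sup_{s\in[m-1,m+1]}|z(s)|$, and hence $(\|z_t\|)_{t\ge 0}$, is bounded in probability. The step I expect to need the most care is the uniform-in-$m$ control of the reverse-time supremum in the upper bound, that is, correctly splitting $[0,s]$ at $m-1$ so that Corollary \ref{col:EsItô3abis} applies. Note that the hypothesis on $(z(m))$ is only needed for the lower bound, and only through $z(m-1)$ being bounded below in probability.
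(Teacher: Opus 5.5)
\textbf{The splitting inequality in the upper-bound step is false.} This is the step you flagged, and it is the one that breaks. For $s\in[m-1,m+1]$ and $\theta\in[m-1,s]$ you have $U(s)-U(\theta)=\bigl(U(s)-U(m-1)\bigr)-\bigl(U(\theta)-U(m-1)\bigr)$. The second bracket is controlled by neither of your two terms.

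Here is a concrete counterexample. Let $U$ be decreasing on $[0,m-1]$. On $[m-1,m+1]$, let it stay below $U(m-1)$, dip to $U(m-1)-K$, and return to $U(m-1)$ at $s=m+1$. Then $\sup_{0\le\theta\le s}(U(s)-U(\theta))\ge K$. Yet $\sup_{\theta\le m-1}(U(m-1)-U(\theta))=0$ and $\sup_{u\in[m-1,m+1]}(U(u)-U(m-1))^+=0$.

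The repair is to use the two-sided oscillation:
\begin{equation*}
\sup_{0\le\theta\le s}\bigl(U(s)-U(\theta)\bigr)\le \sup_{0\le\theta\le m-1}\bigl(U(m-1)-U(\theta)\bigr)+2\sup_{u\in[m-1,m+1]}\bigl|U(u)-U(m-1)\bigr| .
\end{equation*}
The drift of $U$ lies in $[-2\alpha_1,-\alpha_1]$, so the last term is at most $8\alpha_1+2\sup_{u\in[m-1,m+1]}\bigl|\int_{m-1}^u b\,\mathrm dW\bigr|$. Lemma \ref{lem:EsItô3a}, applied to $b$ and to $-b$, controls this uniformly in $m$.

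\textbf{A second, smaller point.} The pathwise bound you quote from the proof of Lemma \ref{thm:upper-Wright-type} rests on Lemma \ref{lem.zbddabove}, which requires $z(0)\le x_0$. For a general initial condition, replace $x_0$ by the a.s.\ finite random variable $\max\{x_0,z(0)\}$. This does not affect boundedness in probability. With these fixes your lower-bound step is correct: you integrate forward from $z(m-1)$ and bound $\int G$ via \eqref{as:G1} together with the upper bound on $[m-2,m]$.

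\textbf{Comparison with the paper's route.} The paper's proof is much shorter and avoids all of this machinery. It sandwiches $z$ on $[n-1,n+1]$ between integer-time values. For the lower bound it integrates \emph{backward} from $n+1$; for the upper bound it integrates forward from $n-1$:
\begin{equation*}
z(n+1)-2\alpha_1-\int_t^{n+1}b\,\mathrm dW\le z(t)\le z(n-1)+2\alpha_1+\int_{n-1}^t b\,\mathrm dW,\qquad t\in[n-1,n+1].
\end{equation*}
This works because, in each direction, the delay term $\pm\int G(z(s-1))\,\mathrm ds$ has the favourable sign once $G\ge 0$. The stochastic integrals are then controlled uniformly in $n$ by Doob's $L^2$ maximal inequality and Markov's inequality.

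That argument uses both halves of the integer-time hypothesis. It needs only $G\ge 0$ and $a\le\alpha_1$: no growth condition, no Lemma \ref{lem.zbddabove}, no reverse-time supremum estimates. By integrating forward from $z(m-1)$ for the lower bound, you make the delay term work against you. That is why you need \eqref{as:G1} and a separate upper bound on $z$. Your route does use only the lower half of the hypothesis on $(z(m))$. But the upper half is already supplied by Lemma \ref{thm:upper-Wright-type} in this setting, so this is not a real gain.
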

\begin{proof}
	Let $\varepsilon > 0$. Take $R > 0$ such that
	\begin{equation}\mathbb P\gb{|z(n)| \le R} \ge 1 - \frac{\varepsilon}{3},\end{equation}
	for every $n \in \mathbb N$. For each $n \in \mathbb N$ and $t \in [n-1,n+1]$, we have
	\begin{equation}z(n+1) = z(t) - \int_t^{n+1} G\gb{z(s-1)}\,\mathrm ds + \int_t^{n+1} a(s)\,\mathrm ds + \int_t^{n+1} b(s)\,\mathrm dW(s).\end{equation}
	This implies
	\begin{align*}
		z(t) &= z(n+1) + \int_t^{n+1} G\gb{z(s-1)}\,\mathrm ds - \int_t^{n+1} a(s)\,\mathrm ds - \int_t^{n+1} b(s)\,\mathrm dW(s).\\
		&\ge z(n+1) - 2\alpha_1 - \int_t^{n+1} b(s)\,\mathrm dW(s)\\
		&= z(n) - 2\alpha_1 - \int_{n-1}^{n+1} b(s)\,\mathrm dW(s) + \int_{n-1}^t b(s)\,\mathrm dW(s)\\
		&\ge z(n+1) - 2\alpha_1 - 2 \sup_{\theta \in [n-1,n+1]} \left|{\int_{n-1}^\theta b(s)\,\mathrm dW(s)}\right|.\yesnumber
	\end{align*}
Similarly, we have
\begin{align*}z(t) &= z(n-1) - \int_{n-1}^{t} G\gb{z(s-1)}\,\mathrm ds + \int_{n-1}^{t} a(s)\,\mathrm ds + \int_{n-1}^{t} b(s)\,\mathrm dW(s)\\
	&\le z(n-1)+2\alpha_1+\sup_{\theta\in [n-1,n+1]} \left| \int_{n-1}^\theta b(s)\, \mathrm dW(s) \right|,\yesnumber
\end{align*}
so that
\begin{equation}\sup_{t\in [n-1,n+1]}|z(t)| \le \max\{|z(n-1)|,|z(n+1)|\}+2\alpha_1+2 \sup_{\theta \in [n-1,n+1] } \left| \int_{n-1}^\theta b(s)\, \mathrm dW(s) \right|.\end{equation}
Since
	\begin{equation}\left(\int_{n-1}^{n-1+t} b(s)\,\mathrm dW(s)\right)_{t \ge 0}\end{equation}
	is a martingale, Doob's maximal inequality in Lemma \ref{lem:stochineq} yields that
	\begin{align*}
		\mathbb E\left(\sup_{t \in [n-1,n+1]} \left|{\int_{n-1}^t b(s)\,\mathrm dW(s)}\right|\right)^2 &= 
		\mathbb E\left(\sup_{t \in [0,2]} \left|{\int_{n-1}^{n-1+t} b(s)\,\mathrm dW(s)}\right|\right)^2\\
		&\le 4 \mathbb E\left(\int_{n-1}^{n+1} b(s)\,\mathrm dW(s)\right)^2 = 4 \int_{n-1}^{n+1} \mathbb E \, b(s)^2\,\mathrm ds\\
		&\le 8 \si^2.\yesnumber
	\end{align*}
	This shows
	\begin{equation}\sup_{n \ge 1} \mathbb E\left(\sup_{t \in [n-1,n+1]} \left|{\int_{n-1}^t b(s)\,\mathrm dW(s)}\right|\right)^2 \le 8\si^2,\end{equation}
	and hence, by the Markov inequality, we obtain that
	\begin{equation}\left( \sup_{t \in [n-1,n+1]}  \left|{\int_{n-1}^t b(s)\,\mathrm dW(s)}\right| \right)_{n \ge 1}\quad\mbox{is bounded in probability}.\end{equation}
	Take $S > 0$ such that
	\begin{equation}\mathbb P\left(\sup_{t \in [n-1,n+1]}  \left|{\int_{n-1}^t b(s)\,\mathrm dW(s)}\right| \ge S\right) < \frac{\varepsilon}{3}\quad\mbox{for every } n \ge 1.\end{equation}
	Then for every $n \in \mathbb N$ we obtain
	\begin{align*}
		&\mathbb P\left( \sup_{t\in [n-1,n+1]}|z(t)| \le R+2\alpha_1+2S \right)\\
		&\hspace{3cm}\ge \mathbb P\left( |z(n-1)| \le R,\ |z(n+1)|\le R, \sup_{t \in [n-1,n+1]} \int_{n-1}^t b(s)\,\mathrm dW(s) \le S\right)\\
		&\hspace{3cm}\ge 1 - \varepsilon.\yesnumber
	\end{align*}
	Thus, $(\sup_{t \in [n-1,n+1]} |z(t)|)_{n \ge 1}$ is bounded in probability.
	
	Finally, for every $t \ge 0$ there is a unique $n(t) \in \mathbb N$ such that $t \in [n(t),n(t)+1)$ and then
	\begin{equation}\sup_{\theta \in [-1,0]} |z(t+\theta)| \le \sup_{s \in [n(t)-1,n(t)+1]} |z(s)|.\end{equation}
	This shows that $(\|z_t\|)_{t \ge 0}$ is bounded in probability as well.
\end{proof}

The most involved part of our entire analysis is showing that solutions of \eqref{eq:genlized-transformed-stoch-Wright} are bounded below in probability. The next lemma presents the core of the proof. 
% \todo{Lemma 4.4 heeft update gekregen: er wordt niet meer aangenomen dat het bounded below on $[-1,0]$ is.)}\todo{Je kunt ook gewoon $\sigma_0$ weghalen?!?!  Net als hierboven. Of aannemen dat $\sigma_0$ is non-negative. Verder hoeven we hiervoor   alleen Lemma's te 3.8, 3.9 en 3.10 wijzigen (en hun bewijzen). }

\begin{lemma}\label{lem:EsMain1}
Let $G:\mathbb R\to\mathbb R$ be a continuous function satisfying \eqref{as:G1} and \eqref{as:G2}. Consider the stochastic delay equation
\eqref{eq:genlized-transformed-stoch-Wright}. 
Assume that there are positive constants  $\alpha_0$, $\alpha_1$,   and $\si$ such that
\begin{equation}\alpha_0 \le a(t) \le \alpha_1\quad\mbox{and}\quad  b(t)^2 \le \si^2\qquad\hbox{for all } t \ge 0.\end{equation}
For every solution $z=(z(t))_{t\geq -1}$, the process
\begin{equation}\label{eq:EsItô11a}
\gb{z(m)}_{m \in \mathbb N}\quad\mbox{is bounded below in probability.}
\end{equation}
\end{lemma}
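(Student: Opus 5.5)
We need to show that $z(m)$ cannot drift to $-\infty$ in probability. The mechanism is this: when $z$ is very negative on an interval of length one, the delayed term $G(z(s-1))$ is close to $0$ by \eqref{as:G2}. The equation then has a net positive drift of at least $\alpha_0/2$, so $z$ climbs back.

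The obstruction is that $z$ may first be very large, since by Lemma \ref{thm:upper-Wright-type} it is only bounded above in probability. A large excursion makes $G(z(s-1))\le \gamma_0+\gamma e^{\lambda z(s-1)}$ huge one time unit later, and this can push $z$ far down. So I will control how far down $z$ can go in terms of how far up it was one unit earlier. That comparison is exactly what the logarithmic thresholds $R_l=-C+\frac1{2\lambda}\log(pl+q)$ in Lemmas \ref{lem:EsItô3} and \ref{lem:EsItô4} are designed for.

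\textbf{Step 1: the last time $z$ was not low.} Fix $m$ and a low threshold $-K$. Choose $x_1$ with $G(x)\le \alpha_0/2$ for $x\le x_1$. On the event $z(m)<-K$, consider the last time $\tau\le m$ at which $z(\tau)\ge x_1$.
\begin{itemize}
\item If $m-\tau\ge 2$, then on $[\tau+1,m]$ the drift is at least $\alpha_0/2$. Writing $Y(t)=\int_0^t(-a(s)+G(z(s-1)))\,\mathrm ds-\int_0^t b\,\mathrm dW$ on that stretch, a drop from above $x_1$ to below $-K$ forces a reverse-time supremum $\sup_\theta(Y(m)-Y(\theta))$ of order $K$ or larger. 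Lemma \ref{lem:EsItô2bis} (or Corollary \ref{col:EsItô3abis}) bounds the probability of this uniformly in $m$.
\item The first unit after $\tau$, where $G(z(s-1))$ may be large, has to be handled separately.
\end{itemize}

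\textbf{Step 2: the unit interval after $\tau$.} Discretize, letting $l=m-\lfloor\tau\rfloor$ range over $0,\dots,m$. The drop of $z$ over $[\tau,\tau+1]$ is at most $\int G(z(s-1))\,\mathrm ds$ plus a martingale increment, so it is bounded by roughly $\gamma_0+\gamma e^{\lambda \sup_{[m-l-2,m-l]} z}$ plus a fluctuation.
\begin{itemize}
\item \textbf{Bounding the earlier maximum.} I bound $\sup z$ on $[m-l-2,m-l]$ via Lemma \ref{lem.zbddabove}, which controls it by increments $u(t)-u(a^t)$ of the driving process $U$ from Lemma \ref{thm:upper-Wright-type}.
\item \textbf{Summing over $l$.} I require these increments to stay below $R_l$, both the reverse-time suprema (Lemma \ref{lem:EsItô3}, with $\alpha>32\lambda\sigma^2$ arranged by choosing the auxiliary drift $2\alpha_1$ large) and the local oscillations (Lemma \ref{lem:EsItô4}). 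The summed failure probability over $l$ is then below $\varepsilon$ uniformly in $m$.
\item \textbf{Size of the drop.} Off the failure event, $e^{\lambda\sup z}\lesssim e^{-2\lambda C}(pl+q)^{1/2}$. So the drop at distance $l$ from $m$ is at most of order $c_0+c_1\sqrt{pl+q}$.
\item \textbf{Recovery.} This drop must be recovered by the positive drift over the remaining $l$ units, which gives at least $\alpha_0 l/2$. The difference $-(Y(m+1)-Y(t))$ is controlled by Corollary \ref{cor:reformulate} / Lemma \ref{lem:EsItô5} with linear threshold $pl+q$. For $l$ large this linear recovery dominates $\sqrt{l}$; small $l$ is absorbed into $K$.
\end{itemize}

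\textbf{Step 3: conclusion.} For given $\varepsilon$, choose $C$, $p<\alpha_0/4$, and then $q$ from the three summation lemmas, and finally $K$. We then get $\mathbb P(z(m)<-K)\le 3\varepsilon$ for all $m$.

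\textbf{Main obstacle.} The delicate part is Step 2: making the case decomposition over $l$ rigorous, with the random time $\tau$ replaced by integer grid points. This must be done so that the reverse-time suprema match the precise forms in Lemmas \ref{lem:EsItô3}, \ref{lem:EsItô4} and Corollary \ref{cor:reformulate}, and so that the $\log$ thresholds exactly cancel the exponential growth bound of $G$ from \eqref{as:G1}. I would first write the pathwise inequality for $z(m)$ on each event $\{\lfloor\tau\rfloor=m-l\}$, and only then read off which lemma applies.
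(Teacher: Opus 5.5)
Your architecture is the paper's. You use a last exit time $\tau\le m$ from a level below which $G$ is small, decompose according to $\lfloor\tau\rfloor$, bound the one-unit drop by $\gamma_0+\gamma e^{\lambda\sup z}$ over the delayed window, control that supremum via Lemma~\ref{lem.zbddabove} and Lemmas~\ref{lem:EsItô3}--\ref{lem:EsItô4} at logarithmic thresholds, and control the recovery via Lemma~\ref{lem:EsItô5} at linear thresholds. The paper phrases the last step contrapositively: on $\{z(m)<-K\}$, either $\gamma e^{\lambda\sup z}$ or $-(V(m)-V(t))$ must carry half of the required total.

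\textbf{Main gap.} As written, the Section~\ref{sec:estims} lemmas are invoked for a process that does not satisfy their hypotheses. All of them require the drift bound a.s.\ for \emph{all} $s\ge 0$. The drift of your $Y$ contains $G(z(s-1))$ and is controlled only on the random stretch $[\tau+1,m]$, and $\tau$ is not even a stopping time. So neither Lemma~\ref{lem:EsItô2bis} in Step~1 nor Lemma~\ref{lem:EsItô5} in the recovery step can be used for $Y$. Indeed $\sup_\theta(Y(m)-Y(\theta))=\sup_\theta(z(\theta)-z(m))$ is just the largest drop of $z$ before $m$, i.e.\ what you want to control.

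The missing device is to split off a deterministic drift. Fix $0<\eta<\alpha_0$ and a level $-R$ with $G<\eta/2$ on $(-\infty,-R)$, and set $V(t)=\int_0^t(a(s)-\eta)\,\mathrm ds+\int_0^t b(s)\,\mathrm dW(s)$. Then $\mathrm dz=-G(z(t-1))\,\mathrm dt+\eta\,\mathrm dt+\mathrm dV$, and $V$ has drift at least $\alpha_0-\eta>0$ everywhere. With $\tau$ the last time in $[0,m]$ at which $z\ge -R$, one gets pathwise on $\{\lfloor\tau\rfloor=m-l\}$, $l\ge 1$,
\begin{equation*}
z(m)\ \ge\ -R-\gamma_0-\gamma\exp\Bigl(\lambda\sup_{[m-l-1,\,m-l+1]}z\Bigr)+\tfrac{\eta}{2}(l-2)+\inf_{t\in[m-l,\,m-l+1]}\bigl(V(m)-V(t)\bigr).
\end{equation*}
Here $\tau$ enters only through deterministic windows, and the fluctuation over $[\tau,\tau+1]$ is absorbed into the $V$-increment. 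Lemma~\ref{lem:EsItô5}, applied to $V$, handles the last term. The supremum of $z$ over the window is at most $A_0+2\eta$ plus a reverse-time increment of $U$ and a forward increment of $V$, which are exactly the forms in Lemmas~\ref{lem:EsItô3} and~\ref{lem:EsItô4}. Your Step~1 then becomes superfluous.

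\textbf{Initial data.} You use no information on $z_0$, which causes three problems:
your $\tau$ need not exist;
Lemma~\ref{lem.zbddabove} needs $z(0)$ below its threshold;
for $\tau<1$ the delayed window reaches into the initial segment, which is not uniformly bounded.
The paper first assumes $\|z_0\|_\infty\le L$ with $L\le R$ and $L\le A_0$, and treats $\tau\in[0,1]$ separately. It then removes the assumption by localizing on $\{\|z_0\|_\infty\le L'\}$.

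\textbf{Growth rate.} If both increments stay below $R_l=-C+\frac{1}{2\lambda}\log(pl+q)$, you get $e^{\lambda\sup z}\le e^{\lambda(A_0+2\eta)-2\lambda C}(pl+q)$. This is linear in $l$, not of order $\sqrt{pl+q}$. The argument still closes, but by a slope comparison rather than ``linear beats $\sqrt{l}$''. Together with the bound $-(p(l-1)+q)$ from Lemma~\ref{lem:EsItô5}, you need $\bigl(1+\gamma e^{\lambda(A_0+2\eta)-2\lambda C}\bigr)p\le\eta/2$. So $C$ must be taken large before $p$ is fixed. Alternatively, run Lemmas~\ref{lem:EsItô3}--\ref{lem:EsItô4} with $2\lambda$ in place of $\lambda$, which needs auxiliary drift above $64\lambda\sigma^2$. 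After that, $q$ and finally $K$ are chosen as you describe.
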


\begin{proof}
First assume there is a constant $L>0$  such that $|z(\theta)|\le L$ a.s.\ for all $\theta\in [-1,0]$. 
Choose $\eta\in (0,\alpha_0)$ and choose $R>0$ so large that
\begin{equation}-L\ge -R\quad\mbox{and}\quad G(x)<\tfrac{\eta}{2}\mbox{ for all }x\in (-\infty,-R).\end{equation}
 % \todo{ik zie wel in dat het best een gedoetje is als $\alpha_0$ is 0. Dus misschien wel onnodig veel werk.} 
Choose $\beta> 32\lambda\si^2$ and  put
\begin{equation}A_0=\max\left\{x_0, x_0+\alpha_1+\tfrac{1}{2}\si^2+\beta,L\right\},\end{equation}
where $x_0$ is such that $G(x) > \alpha_1+\tfrac{1}{2}\si^2+\beta$ for all $x> x_0$. Define for $t \ge 0$ the processes
\begin{align}
U(t) &= - \beta t - \int_0^t \tfrac{1}{2}b(s)^2\,\mathrm ds + \int_0^t b(s)\, \mathrm dW(s),\label{eq:EsItô12}\\
V(t) &= \int_0^t \gb{a(s) - \eta}\,\mathrm ds + \int_0^t b(s)\,\mathrm dW(s).\label{eq:EsItô13}
\end{align}
Note that using the processes $U$ and $V$ we can rewrite equation \eqref{eq:genlized-transformed-stoch-Wright} for $z$ as
\begin{align}
\mathrm dz(t) &= -G(z(t-1))\,\mathrm dt + \eta\,\mathrm dt + \mathrm dV(t),\qquad t \ge 0,\label{eq:EsItô14}\\[0.2cm]
\mathrm dz(t) &= -\gb{G(z(t-1)) - a(t) - \tfrac{1}{2}b(t)^2 - \beta}\mathrm dt + \mathrm dU(t),\qquad t\geq 0.\label{eq:EsItô15}
\end{align}
The aim is to apply Lemma \ref{lem:EsItô3}, Lemma \ref{lem:EsItô4} and Lemma \ref{lem:EsItô5} to suitable processes $Y$.

For the process $U$, we have
\begin{equation}
U(t) = -\int_0^t \gb{\beta + \tfrac{1}{2}b(s)^2}\,\mathrm ds + \int_0^t b(s)\,\mathrm dW(s)= - \int_0^t a_U(s)\,\mathrm ds + \int_0^t b(s)\,\mathrm dW(s),
\end{equation}
where
\begin{equation}a_U(s) = \beta + \tfrac{1}{2}b(s)^2.\end{equation}
So, from the assumptions,
\begin{equation}a_U(s) \ge \beta ,\quad \sigma_0^2 \le b(s)^2 \le \si^2,\quad\mbox{and } \beta > 32\lambda \si^2.\end{equation}
This shows that the process $Y := U$ satisfies the conditions of Lemma  \ref{lem:EsItô3}.
Similarly for the process $V$, we have 
% \todo{Voor $V$ heb je dus te maken met POSITIEVE DRIFT; correct?}
\begin{equation}
V(t) = -\int_0^t \gb{\eta - a(s)}\,\mathrm ds + \int_0^t b(s)\,\mathrm dW(s)\\
= - \int_0^t a_V(s)\,\mathrm ds + \int_0^t b(s)\,\mathrm dW(s),
\end{equation}
where
\begin{equation}a_V(s) = \eta - a(s).\end{equation}
Since
\begin{equation}-a_V(s) = a(s) - \eta \le \alpha_1 - \eta\quad\mbox{and}\quad a_V(s) \le \eta - \alpha_0 < 0,\end{equation}
the process $Y := V$ satisfies the conditions of Lemma  \ref{lem:EsItô4} and Lemma \ref{lem:EsItô5}.  Note that, in contrast to $U$, this process  is an It\^o process with positive drift.

Let $\varepsilon > 0$. It follows from Lemma \ref{lem:EsItô3}, Lemma \ref{lem:EsItô4} and Lemma \ref{lem:EsItô5} that we can choose $C > 0$ so large such that, for every $m \in \mathbb N$, the processes $U$ and $V$ satisfies the following estimates
% \todo{Eerste 3 bounds ook?}
\begin{align}
&\mathbb P\left(\,\sup_{t \in [m,m+1]} \gb{V(t) - V(m)} \ge C - R - \gamma_0 - \eta \right) < \frac{\varepsilon}{6},\\
&\mathbb P\left(\,\sup_{t \in [0,1]} (V(t)-V(0)) \ge D(m-1,C)\right) <  \frac{\varepsilon}{6},\\
&\mathbb P\left(\,\sup_{t \in [0,1]} - \gb{V(m) - V(t)} \ge \tfrac{1}{4}(C-R-\gamma_0) + \tfrac{\gamma}{4}(m-1) \right) <  \frac{\varepsilon}{6},
\end{align}
and
\begin{align}
&\sum_{l=0}^{m-2} \mathbb P\left(\, \sup_{t \in [0,m-2-l]} \gb{U(m-2-l) - U(t)} \ge D(l,C) \right) <  \frac{\varepsilon}{6},\\
&\sum_{l=0}^{m-1} \mathbb P\left(\, \sup_{t \in [m-2-l,m-l]} \gb{V(t) - V(m-2-l)} \ge D(l,C) \right) <  \frac{\varepsilon}{6},\\
&\sum_{l=0}^{m-1} \mathbb P\left(\, \sup_{t \in [m-1-l,m-l]} -\gb{V(m) - V(t)} \ge \tfrac{1}{2}(C-R-\gamma_0) + \tfrac{\gamma}{4}l \right) <  \frac{\varepsilon}{6},
\end{align}
where
\begin{equation}\label{def:D}
D(l,C) :=  -\tfrac{1}{2}A_0 - \eta - \tfrac{1}{2\lambda}\log 2 +\tfrac{1}{2\lambda} \log\gb{\tfrac{1}{\gamma}(C-R-\gamma_0) + \tfrac{\eta}{2\gamma}l}.
\end{equation}

\paragraph{Goal.} Fix $m \in \mathbb N$ and define
\begin{align}
\Omega_0 &= \{ \omega \in \Omega \mid z(m)(\omega) \le -C \},\\
\Omega_1 &= \{ \omega \in \Omega_0 \mid \exists\, t \in [m,m+1] \hbox{ such that } z(t)(\omega) \ge -R \},\\
\Omega_2 &= \{ \omega \in \Omega_0 \mid z(t)(\omega) \le -R \hbox{ for all } t \in [m,m+1] \}.
\end{align}
We wish to show that $\mathbb P (\Omega_0) < \varepsilon$. Note that $\Omega_0 = \Omega_1 \cup \Omega_2$ and hence
\begin{equation}\mathbb P(\Omega_0) \le \mathbb P(\Omega_1) + \mathbb P(\Omega_2).\label{eq:POmega0}\end{equation}
So it suffices to provide estimates for $\mathbb P(\Omega_j)$, $j=1,2$.

% \smallskip
\paragraph{Estimate on $\Omega_1$.} Pointwise on $\Omega_1$, let $t^* \in [m,m+1]$ such that $z(t^*) \ge -R$. Then it follows from  \eqref{eq:EsItô14} that
\begin{equation}z(t^*) = z(m) - \int_m^{t^*} G\gb{z(s-1)}\,\mathrm ds + \eta(t^* - m) + V(t^*) - V(m),\end{equation}
so
\begin{align*}
V(t^*) - V(m) &= z(t^*) - z(m)  -  \eta(t^* - m) + \int_m^{t^*} G\gb{z(s-1)}\,\mathrm ds\\
&\ge -R + C - \gamma_0 - \eta,\yesnumber
\end{align*}
hence
\begin{equation}\sup_{t \in [m,m+1]} \gb{V(t) - V(m)} \ge -R + C - \gamma_0 - \eta,\end{equation}
so that
\begin{equation}\label{eq:EsItô16}
\mathbb P(\Omega_1) \le \mathbb P\,\left(\,\sup_{t \in [m,m+1]} \gb{V(t) - V(m)} \ge C - R - \gamma_0 - \eta \right) < \frac{\varepsilon}{6}.
\end{equation}

\paragraph{Estimate on $\Omega_2$.} Next we estimate $\mathbb P(\Omega_2)$ and we will prove that
\begin{equation}\label{eq:EsItô16a}
\mathbb P(\Omega_2) \le  \frac{5\varepsilon}{6}.
\end{equation}
Define for $l \in \{0,\ldots,m-1\}$, the subsets $\Omega_{2,l}$ of $\Omega_2$ by
\begin{equation}\Omega_{2,l} = \{ \omega \in \Omega_2 \mid z(t) \le -R \hbox{ on } [l+1,m+1), \ \exists\, t \in [l,l+1]: z(t)(\omega) > -R \}.\end{equation}
By assumption, $z(0) \ge -L \ge -R$. So $\Omega_2 = \bigcup_{l=0}^{m-1} \Omega_{2,l}$
and hence
\begin{equation}\label{eq:ESItô16a}
\mathbb P(\Omega_2) \le \sum_{l=0}^{m-1} \mathbb P(\Omega_{2,l})
\end{equation}
and in order to estimate $\mathbb P(\Omega_2)$ it suffices to estimate $\mathbb P(\Omega_{2,l})$ for $1 \le l \le m-1$.

% \smallskip
\paragraph{Estimate on $\Omega_{2,l}$.}
Pointwise on $\Omega_{2,l}$, define for $l \in \{0,\ldots,m-1\}$,
\begin{equation}t_l^* = \inf \left\{t \in [l,l+1] \mid z(s) \le -R \hbox{ for all } s \in [t,m+1] \right\}\end{equation}
and note that
\begin{equation}t_l^* \in [l,l+1]\quad\hbox{and}\quad z(t_l^*) \ge -R\quad\hbox{on } \Omega_{2,l}.\end{equation}
Fix $l \in \{0,\ldots,m-1\}$ for a moment. Pointwise on $\Omega_{2,l}$ we have
\begin{align}\label{eq:EsItô17}
z(m) &= z(t_l^* + 1) - \int_{t_l^*+1}^m G\gb{z(s-1)}\,\mathrm ds + \eta(m-t_l^*-1)\nonumber\\
&\qquad\qquad\qquad\qquad + V(m) - V(t_l^*+1).
\end{align}
So
\begin{align}\label{eq:EsItô18}
z(t_l^* + 1) &= z(m) + \int_{t_l^*+1}^m G\gb{z(s-1)}\,\mathrm ds - \eta(m-t_l^*-1)\nonumber\\
&\qquad\qquad - \gb{V(m) - V(t_l^*+1)}\nonumber\\
&\le - C + \tfrac{\eta}{2}(m-t_l^*-1) -  \eta(m-t_l^*-1)\nonumber\\
&\qquad\qquad - \gb{V(m) - V(t_l^*+1)}\nonumber\\
&\le -C - \tfrac{\eta}{2}(m-l-1) - \gb{V(m) - V(t_l^*+1)}.
\end{align}
From \eqref{eq:EsItô14} on $[t_l^*,t_l^*+1]$, we obtain
\begin{align*}
\int_{t_l^*}^{t_l^*+1} G\gb{z(s-1)}\,\mathrm ds &= z(t_l^*) + \eta + V(t_l^*+1) - V(t_l^*) - z(t_l^*+1)\\
&\ge -R + V(t_l^*+1) - V(t_l^*) - z(t_l^*+1)\\
&\ge -R + V(t_l^*+1) - V(t_l^*) + C + \tfrac{\eta}{2}(m-l-1)\\
&\qquad\qquad  + \gb{V(m) - V(t_l^*+1)},\yesnumber
\end{align*}
where in the last inequality we have used \eqref{eq:EsItô18}.

Hence there is an $s \in [t_l^*,t_l^*+1]$ such that
\begin{equation}\gamma_0 + \gamma e^{\lambda z(s-1)} \ge G\gb{z(s-1)} \ge  C - R + \frac{\eta}{2}(m-l-1) +V(m) - V(t_l^*),\end{equation}
so
\begin{equation}
\exp\,\left(\sup_{s \in [t_l^*-1,t_l^*]} \lambda z(s)\right) \ge \frac{1}{\gamma}(C-R-\gamma_0) + \frac{\eta}{2\gamma}(m-l-1) + \frac{1}{\gamma}\gb{V(m) - V(t_l^*)}.
\end{equation}
Therefore
\begin{equation}\label{eq:EsItô19}
\sup_{s \in [l-1,l+1]} \lambda z(s) \ge \log \left(\tfrac{1}{\gamma}(C-R-\gamma_0) + \tfrac{\eta}{2\gamma}(m-l-1) + \tfrac{1}{\gamma}\gb{V(m) - V(t_l^*)}\right).
\end{equation}

\paragraph{Case $l\neq 0$.} First consider the scenario $l \ge 1$. Due to Lemma \ref{lem.zbddabove}, 
with $f(x,y) = G(x)$, $d(t) = a(t) + \tfrac{1}{2}b(t)^2+\beta$, and $\delta=\alpha_1+\tfrac{1}{2}\si^2+\beta$ we have that the solution $z$ is bounded from above by
\begin{equation}\label{eq:EsItô20}
z(l-1) \le A_0 + \sup_{0 \le \theta \le l-1} \gb{U(l-1) - U(\theta)}.
\end{equation}
For $t \in [l-1,l+1]$ we obtain from \eqref{eq:EsItô14} that
\begin{align}
\nonumber z(t) &= z(l-1) - \int_{l-1}^t G\gb{z(s-1)}\,\mathrm ds + \eta(t-l+1) + V(t) - V(l-1)\\
\nonumber&\le z(l-1) + 2\eta + \gb{V(t) - V(l-1)}\\
&\le A_0 + \sup_{0 \le \theta \le l-1} \gb{U(l-1) - U(\theta)} + 2\eta +  \gb{V(t) - V(l-1)},\yesnumber
\end{align}
where in the last inequality we have used \eqref{eq:EsItô20}. Taking the supremum over $t$ yields
\begin{equation}\label{eq:EsItô21}
\sup_{t \in [l-1,l+1]} z(t) \le A_0 + \sup_{0 \le \theta \le l-1} \gb{U(l-1) - U(\theta)} + 2\eta +  \sup_{l-1 \le t \le l+1}\gb{V(t) - V(l-1)}.
\end{equation}
Combining \eqref{eq:EsItô19} and \eqref{eq:EsItô21} yields the following inequality
\begin{align}
\nonumber&\exp\,\left(\lambda\gb{A_0 + 2\eta + \sup_{0 \le t \le l-1} \gb{U(l-1) - U(t)} + \sup_{l-1 \le t \le l+1} \gb{V(t) - V(l-1)}}\right)\\
&\qquad\qquad - \frac{1}{\gamma}  \gb{V(m) - V(t_l^*)} \ge \frac{1}{\gamma}(C-R-\gamma_0) + \frac{\eta}{2\gamma}(m-l-1).
\end{align}
From this inequality it follows that at least one of the following two inequalities is satisfied. So either
\begin{align}\label{eq:EsItô22}
&\exp\,\left(\lambda\gb{A_0 + 2\eta + \sup_{0 \le t \le l-1} \gb{U(l-1) - U(t)} + \sup_{l-1 \le t \le l+1} \gb{V(t) - V(l-1)}}\right)\nonumber\\
&\qquad\qquad \ge \frac{1}{2\gamma}(C-R-\gamma_0) + \frac{\eta}{4\gamma}(m-l-1)
\end{align}
or
\begin{equation}\label{eq:EsItô23}
 - \frac{1}{\gamma}  \gb{V(m) - V(t_l^*)} \ge \frac{1}{2\gamma}(C-R-\gamma_0) + \frac{\eta}{4\gamma}(m-l-1).
\end{equation}
Inequality \eqref{eq:EsItô22} implies
\begin{align}
\nonumber&\sup_{0 \le t \le l-1} \gb{U(l-1) - U(t)} + \sup_{l-1 \le t \le l+1} \gb{V(t) - V(l-1)}\\
&\qquad\qquad \ge -A_0 - 2\eta - \frac{1}{\lambda}\log 2 + \frac{1}{\lambda}\log \left(\frac{1}{\gamma}(C-R-\gamma_0) + \frac{\eta}{2\gamma}(m-l-1)\right)
\end{align}
and inequality \eqref{eq:EsItô23} implies
\begin{equation}\sup_{t \in [l,l+1]} - \gb{V(m) - V(t)} \ge  \frac{1}{2}(C-R-\gamma_0) + \frac{\eta}{4}(m-l-1).\end{equation}

Summarizing the analysis, we have that on $\Omega_{2,l}$ at least one of the following three inequalities holds (recall $D(l,C)$ is defined in \eqref{def:D})
\begin{equation}\sup_{0 \le \theta \le l-1} \gb{U(l-1) - U(t)} \ge D(m-l-1,C)\end{equation}
or
\begin{equation}\sup_{l-1 \le t \le l+1} \gb{V(t) - V(l-1)} \ge D(m-l-1,C)\end{equation}
or
\begin{equation}\sup_{t \in [l,l+1]} - \gb{V(m) - V(t)} \ge \frac{1}{2}(C-R-\gamma_0) + \frac{\eta}{4}(m-l-1).\end{equation}
Thus it follows that
\begin{align}\label{eq:EsItô25}
\mathbb P(\Omega_{2,l}) &\le \mathbb P\,\left(\sup_{0 \le \theta \le l-1} \gb{U(l-1) - U(t)} \ge D(m-l-1,C)\right)\nonumber\\
&\qquad+ \mathbb P\,\left(\sup_{l-1 \le t \le l+1} \gb{V(t) - V(l-1)} \ge D(m-l-1,C)\right)\nonumber\\
&\qquad+ \mathbb P\,\left(\sup_{t \in [l,l+1]} - \gb{V(m) - V(t)} \ge  \frac{1}{2}(C-R-\gamma_0) + \frac{\eta}{4}(m-l-1)\right).
\end{align}

\paragraph{Case $l=0.$} Similar estimates can be obtained for the case $l=0$. Indeed, for $t \in [0,1]$ integration of \eqref{eq:EsItô14} yields
\begin{align}
\nonumber z(t) &\le z(0) - \int_0^t G\gb{z(s-1)}\,\mathrm ds + \eta t + V(t) - V(0)\\
&\le A_0 + \eta + \sup_{t \in [0,1]} \gb{V(t) - V(0)}
\end{align}
and for $t \in [-1,0]$, we have $z(t) \le A_0.$ By \eqref{eq:EsItô19} it follows that
\begin{align}
\nonumber\exp\left(\lambda\gb{A_0 + \eta + \sup_{t \in [0,1]} \gb{V(t) - V(0)}\right) - \frac{1}{\gamma}\gb{V(m) - V(t_0^*)}}\\
\qquad\qquad\ge \frac{1}{\gamma}(C-R-\gamma_0) + \frac{\eta}{2\gamma}(m-1).
\end{align}
Therefore one of the following two inequalities is satisfied,
\begin{equation}\exp\left(\lambda\gb{A_0 + \eta + \sup_{t \in [0,1]} \gb{V(t) - V(0)}} \right)\ge \frac{1}{2\gamma}(C-R-\gamma_0) + \frac{\eta}{4\gamma}(m-1)\end{equation}
or
\begin{equation}-\frac{1}{\gamma}\gb{V(m) - V(t_0^*)} \ge \frac{1}{2\gamma}(C-R-\gamma_0) + \frac{\eta}{4\gamma}(m-1).\end{equation}
Thus, on $\Omega_{2,0}$ we have
\begin{align}
\nonumber\sup_{t \in [0,1]} \gb{V(t)-V(0)} &\ge -A_0 - \eta - \frac{1}{\lambda}\log 2\\
&\qquad + \frac{1}{2\lambda} \log\left(\frac{1}{\gamma}(C-R-\gamma_0) + \frac{\eta}{2\gamma}(m-1)\right)
\end{align}
or
\begin{equation} \sup_{t \in [0,1]} - \gb{V(m) - V(t)} \ge \frac{1}{2}(C-R-\gamma_0) + \frac{\eta}{4}(m-1).\end{equation}
\paragraph{Conclusions.} By combining all estimates we obtain
\begin{align*}
\mathbb P(\Omega_2) &\le \sum_{l=0}^{m-1} \mathbb P(\Omega_{2,l}) = \mathbb P(\Omega_{2,0}) + \sum_{l=0}^{m-2} \mathbb P(\Omega_{2,m-l-1})\\
&\le
\mathbb P\left(\sup_{t \in [0,1]}  \gb{V(t)-V(0)} \ge  -A_0 - \eta - \frac{1}{\lambda}\log 2\right.\\
&\qquad\qquad\qquad\qquad\qquad\qquad +  \frac{1}{2\lambda} \log\left(\frac{1}{\gamma}(C-R-\gamma_0) + \frac{\eta}{2\gamma}(m-1)\right)\Biggr)\\
&\qquad+
\mathbb P\left( \sup_{t \in [0,1]} - \gb{V(m) - V(t)} \ge \frac{1}{2}(C-R-\gamma_0) + \frac{\eta}{4}(m-1) \right)\\
&\qquad+
\sum_{l=0}^{m-2} \mathbb P\left( \sup_{t \in [0,m-2-l]} \gb{U(m-2-l) - U(t)} \ge D(l,C) \right)\\
&\qquad+
\sum_{l=0}^{m-2} \mathbb P\left( \sup_{t \in [m-2-l,m-l]} \gb{V(t) - V(m-2-l)} \ge D(l,C) \right)\\
&\qquad+
\sum_{l=0}^{m-2} \mathbb P\left( \sup_{t \in [m-1-l,m-l]} -\gb{V(m) - V(t)} \ge \frac{1}{2}(C-R-\gamma_0) + \frac{\gamma}{4}l \right)\\
&<  \frac{5\varepsilon}{6}.\yesnumber
\end{align*}
From \eqref{eq:POmega0} we conclude $\mathbb P(\Omega_0)<\varepsilon,$ for arbitrary $m\in\mathbb N,$ which shows that for all $\varepsilon>0$ there exists a $C>0$ so that $\sup_{m\in\mathbb N}\mathbb P(z(m)\leq -C)<\varepsilon.$ This proves the assertion, provided that $|z(\theta)|\le L$   for all $\theta\in [-1,0]$,  uniformly on the entire sample space $\Omega$ (up to a null set).

Finally, define  for every  integer $L'>0$ the measurable set 
% \todo{NEW}
% \todo{Waarom kun je niet hele begin conditie doen in Hfdst 6?} 
\begin{equation}
	\Omega_{L'}:=\left\{\omega\in\Omega:\|z_0\|_\infty=\sup _{\theta \in[-1,0]}|z(\theta,\omega)| \leq L'\right\}.
\end{equation}
Then $(z(m)\mathbf 1_{\Omega_{L'}})_{m\in\mathbb N}$ is bounded below in probability for any fixed integer $L'$. Subsequently, we obtain that the   family $(z(m))_{m\in\mathbb N}$ is also bounded below in probability, irrespective  of the initial data, since $\mathbb P( \Omega_{L'}^c)=\mathbb P(\|z_0\|_\infty>L')\to 0$   as $L'\to\infty$. 
\end{proof}

We are now ready to prove the following generalized version of Theorem \ref{thm:Wrights}.

\begin{theorem}\label{thm:Wrights2}
Consider equation \eqref{eq:genSTW},
\begin{equation}
\mathrm dz(t)=-G(z(t-1))\,\mathrm dt + (r-a(z_t))\,\mathrm dt + b(z_t)\,\mathrm dW(t),
\end{equation}
where $G\colon \mathbb{R}\to\mathbb{R}$ is continuous and satisfies \eqref{as:G1} and \eqref{as:G2} for some positive constants $\gamma_0$, $\gamma$, and $\lambda$, and where $a,b\colon C[-1,0]\to\mathbb{R}$ are locally Lipschitz with $0\le a(u)\le\alpha$  and $b(u)^2\le \beta^2$, for all $u\in C[-1,0]$ for some positive constants $\alpha$ and $\beta$. 

For every $\SF_0$-measurable random variable $\varphi$ taking values in $C[-1,0]$, there exists a unique global solution $z$ with $z_0=\varphi$ and the process $(z(t))_{t\ge -1}$ is bounded above in probability. If $r > \al$, then $(z(t))_{t\ge -1}$ is also bounded below in probability and the segment process $(z_t)_{t\ge 0}$ is tight. Moreover, if $r > \al$, then there exists an invariant measure for \eqref{eq:genSTW}.
\end{theorem}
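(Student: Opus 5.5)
The plan is to assemble Theorem \ref{thm:Wrights2} from the lemmas of this section, applied along a fixed solution path. Global existence and uniqueness come from Proposition \ref{lem:globlexistencepuredelay} with $f=-G$, drift $u\mapsto r-a(u)$ and noise $b$. The drift is locally Lipschitz and bounded because $0\le a\le\al$. Fix $\varphi$ and the resulting global solution $z$. Define the adapted measurable processes
\begin{equation*}
\tilde a(t):=r-a(z_t),\qquad \tilde b(t):=b(z_t).
\end{equation*}
Then $z$ solves \eqref{eq:genlized-transformed-stoch-Wright} with $\tilde a$, $\tilde b$ in place of $a$, $b$, and these satisfy $r-\al\le\tilde a(t)\le r$ and $\tilde b(t)^2\le\be^2$.

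\textbf{Upper bound.} Lemma \ref{thm:upper-Wright-type} needs only $\tilde a\le \al_1:=r$ and $\tilde b^2\le\be^2$. It gives that $(z(t))_{t\ge0}$ is bounded above in probability. On $[-1,0]$ the process has continuous paths, so it is bounded in probability there by the observation in the terminology paragraph. Hence $(z(t))_{t\ge-1}$ is bounded above in probability, with no condition on $r$.

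\textbf{Lower bound when $r>\al$.} Set $\al_0:=r-\al>0$ and $\al_1:=r$. Then $\al_0\le\tilde a\le\al_1$, so Lemma \ref{lem:EsMain1} applies and $(z(m))_{m\in\BN}$ is bounded below in probability. The upper bound above restricts to integer times. Therefore $(z(m))_{m\in\BN}$ is bounded in probability, and Lemma \ref{lem:EsMainSegment} (which requires $0\le\tilde a\le\al_1$, true since $\tilde a\ge r-\al>0$) gives that $(\|z_t\|)_{t\ge0}$ is bounded in probability. Since $|z(t)|\le\|z_t\|$ for $t\ge0$, and $[-1,0]$ is handled as before, $(z(t))_{t\ge-1}$ is bounded below in probability.

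\textbf{Tightness and invariant measure.} Write \eqref{eq:genSTW} in the form \eqref{sdealg} with drift
\begin{equation*}
\hat a(u):=-G(u(-1))+r-a(u).
\end{equation*}
This is locally Lipschitz provided $G$ is. That holds for $G(x)=re^x$ in Theorem \ref{thm:Wrights}; in general one must either assume it or note that Theorem \ref{thm:fromboundedtoinvarmeas} only uses the structure through existence. Since $G\ge0$ and $a\ge0$, we have $\hat a(u)\le r$, and $b^2\le\be^2$. Theorem \ref{thm:fromboundedtoinvarmeas} then yields that $(z_t)_{t\ge0}$ is tight in $C[-1,0]$. Because every solution is global, the same theorem gives an invariant measure.

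The substantive difficulty is entirely inside Lemma \ref{lem:EsMain1}. For the theorem itself, the main points to check are that the solution-dependent coefficients $r-a(z_t)$ and $b(z_t)$ are admissible adapted processes for the lemmas, that the constants satisfy the lemmas' positivity hypotheses (in particular $\al_0=r-\al>0$), and the local Lipschitz property of $G$ required by Theorem \ref{thm:fromboundedtoinvarmeas}.
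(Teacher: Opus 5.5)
Your proposal is correct and follows the paper's proof step for step. It gets global existence from Proposition~\ref{lem:globlexistencepuredelay}, the upper bound from Lemma~\ref{thm:upper-Wright-type} with $\alpha_1=r$, and the lower bound at integer times from Lemma~\ref{lem:EsMain1} with $\alpha_0=r-\alpha>0$, then uses Lemma~\ref{lem:EsMainSegment} and Theorem~\ref{thm:fromboundedtoinvarmeas} for the segment bound, tightness and the invariant measure.

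Your caveat about local Lipschitz continuity of $G$ is apt: the paper uses it silently, and it holds for $G(x)=re^x$. The caveat is already needed in your first step, though, because Proposition~\ref{lem:globlexistencepuredelay} also requires $f=-G$ to be locally Lipschitz. So simply assuming it is the right fix, rather than your unsupported claim that Theorem~\ref{thm:fromboundedtoinvarmeas} only needs existence.
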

\begin{proof}
Let $\varphi$ be an $\SF_0$-measurable random variable in $C[-1,0]$. According to Proposition \ref{lem:globlexistencepuredelay} there exists a unique global solution $z$ of \eqref{eq:genSTW}. Recall that
\begin{equation}
    a(t):=r-a(z_t)\quad\hbox{and}\quad b(t):=b(z_t).
\end{equation}
% Put\todo{Deze u definieren is toch helemaal niet nodig meer? Gewoon gelijk naar thm. 4.2.}
% \begin{equation}U(t) = -\int_0^t a(z_s)\,\mathrm ds + \int_0^t b(z_s)\,\mathrm dW(s),\qquad t \ge 0.\end{equation}
% Then $U$ is a stochastic process with continuous paths, $U(0)=0$, and by Corollary \ref{col:EsItô3abis} we have that
% \begin{equation}\gb{\sup_{0 \le s \le t} (U(t) - U(s)}_{t \ge 0}\quad\mbox{is bounded above in probability.}\end{equation}
Due to Lemma \ref{thm:upper-Wright-type}, we obtain that $z$ is bounded above in probability (with $\alpha_1:=r$ and $\beta:=\sigma$).

Now assume that $r > \alpha$. To show that $z$ is also bounded below in probability, 
observe that the conditions of Lemma \ref{lem:EsMain1} are satisfied (indeed, $\alpha_0:=r-\alpha>0$), so that $(z(m))_{m\in\mathbb{N}}$ is bounded below in probability. Then Lemma \ref{lem:EsMainSegment} yields that $(\| z_t \|)_{t \ge 0}$ is bounded in probability. 
Therefore, by Theorem \ref{thm:fromboundedtoinvarmeas} we obtain that $(z_t)_{t \ge 0}$ is tight in $C[-1,0]$ and  \eqref{eq:genSTW} has an invariant measure.
\end{proof}

 \begin{proof}[Proof of Theorem $\ref{thm:Wrights}$.]
We view  \eqref{eq:stoch-t-Wrights} as a special case of \eqref{eq:genSTW},
where
\begin{equation}G(x) = re^x,\quad x \in \mathbb R,\quad\mbox{satisfies } \eqref{as:G1} \mbox{ and } \eqref{as:G2},\end{equation}
and
\begin{equation} r - a(z_t) \le r\quad\mbox{and}\quad  r - a(z_t) \ge r - \alpha > 0\quad\mbox{for all } t \ge 0.\end{equation}
 Theorem \ref{thm:Wrights} is therefore a direct corollary of  Theorem \ref{thm:Wrights2}.\end{proof}

\section{Numerical results}\label{sec:numerics}
In this section, we briefly discuss  some numerical results of the transformed Wright's equation
\begin{equation}
    x'(t)=-r\big(e^{x(t-1)}-1),\label{eq:det}
\end{equation}
together with its stochastic counterpart \eqref{sdeSTW} with a constant noise coefficient, i.e.,
\begin{equation}
    \mathrm dx(t)=-r\big(e^{x(t-1)}-1)\,\mathrm dt-\tfrac12\sigma^2\,\mathrm dt+\sigma\,\mathrm dW(t).\label{eq:numeric}
\end{equation}
Recall, this equation is obtained from the logarithmic transformation $x(t)=\log(1+y(t))$, where zero is mapped to itself. The time series shown in Figures  \ref{fig:1} and \ref{fig:2} are generated using a standard Euler–Maruyama scheme \cite{buckwar2008weak} applied to the stochastic delay equation \eqref{eq:numeric}. Invariant measures in  these figures are obtained by performing the Krylov--Bogoliubov method numerically.\newpage 

\begin{figure}[!t]
    \centering
    \includegraphics[width=0.995\linewidth, trim = 0cm 0 0 0, clip]{twoseries-r1.5initial0.9.pdf}

    \,
    \includegraphics[width=0.995\linewidth, trim = 0cm 0 0 0, clip]{r1.5sigma0.04initial0.9.pdf}

    \caption{In the above, we plot  the  time series of both the solution to the deterministic transformed equation \eqref{eq:det} and a single realisation of the stochastic equation \eqref{eq:numeric} on the time interval $[0,500]$. The  parameters are $r=1.5$ and $\sigma=0.04$, and the initial condition is   $\varphi(t)=0.9,t\in[-1,0].$ In the below, we perform a simulation of 100 sample paths of the solution to  \eqref{eq:numeric}, with the same parameters. In the left panel, we plot these paths on the time interval $[-1,50]$. In the middle panel, we show a histogram of all values over the interval $[250,500]$, to avoid transient effects from the initial condition.  In the right panel, we display the phase portrait as a heat map,   visualising the push-forward of an invariant measure $\mu$ on $C[-1,0]$ under the evaluation map $C[-1,0]\to\mathbb R, \varphi\mapsto (\varphi(-1),\varphi(0))$. Observe that changing the interval $[250,500]$ into $[\alpha,\beta]$ for any $\beta\gg\alpha\gg 0$  leads (approximately) to the same figures, indicating that the statistical structures are indeed invariant. The invariant measure   retrieved is the stochastic analogue of the Dirac measure at 0, corresponding to the stable fixed point in \eqref{eq:det} as $r<\pi/2$.  }
    \label{fig:1}
\end{figure}

\begin{figure}[!t]
    \centering
    
    \includegraphics[width=0.995\linewidth]{r1.75det.pdf}\\[.25cm]
    \includegraphics[width=0.995\linewidth]{r1.75sigma0.04initial0.5.pdf}\\[.25cm]
    \includegraphics[width=0.995\linewidth]{r1.75sigma0.04initial0.0.pdf}

    \caption{Simulation of the deterministic solution (first row) and that of 100 sample paths of the solution to equation \eqref{eq:numeric} (second and third row) on the time interval $[0,500]$ with parameters $r=1.75$ and $\sigma=0.04.$ The initial data is given by $z_0=\varphi,$ with $\varphi=0.5$ (first and second row) and $\varphi=0$ (third row). On the left, we plot the paths on the time interval $[-1,50]$. In the middle panel, we show a histogram of all values over the interval $[250,500]$, to avoid transient effects from the initial condition. On the right, we provide a heatmap of the phase portrait of all sample paths on the interval $[250,500]$. The invariant measures   retrieved  in the second and third row describes statistically the stochastic analogue of the unique slowly oscillating periodic orbit as $r>\pi/2$. }
    \label{fig:2}
\end{figure}

Let us explain in some detail how we obtain numerical approximations of the invariant measures present; see  \cite{vandenbosch2026b} for an extensive discussion. Define for  $0\leq t_0 \ll T<\infty $ the quantities
\begin{equation}
\nu_T(A) := \frac{1}{T} \int_{t_0}^{t_0+T} \mathbb{P}(x(t) \in A)\, \mathrm  dt,\qquad \mu_T(B) := \frac{1}{T} \int_{t_0}^{t_0+T} \mathbb{P}(x_t \in B)\, \mathrm  dt,\label{eq:muT}
\end{equation}
for measurable sets $A\subset \mathbb R$ and $B\subset C[-1,0].$ In here,  $\nu_T$ and $\mu_T$  are probability measures on $\mathbb R$ and $C[-1,0]$, respectively. 
  The Krylov--Bogoliubov theorem \cite[App. C]{vandenbosch2026a}  allows us to conclude that $(\mu_T)_{T\geq 0}$ has a convergent subsequence, whose limit we denote by $\mu$.   In many applications,  the limit   $  \mu = \lim_{T\to \infty} \mu_{T}$ simply exists (this is something we do not prove), which is hence an  invariant measure.  Note that the existence of this (subsequential) limit $\nu$   implies, for example, the existence of a stationary distribution $  \mu = \lim_{T \to \infty} \mu_T$ satisfying $\mu(A)=\nu(\{\varphi\in C[-1,0]:\varphi(0)\in A\})$. Since this construction depends on the choice of the initial condition $z_0=\varphi$, different initial data may in principle lead to different invariant measures  $\mu$ and $\nu$. 

  As a side remark, recall that properties such as support, ergodicity, or uniqueness of invariant measures obtained through the Krylov--Bogoliubov procedure are generally not known a priori. As explained in \cite{vandenbosch2026b} for the stochastic Mackey--Glass equations,  a strength of Theorem \ref{thm:Wrights} lies within the numerical implementation: it provides a rigorous framework for verifying that the  structures observed in, for instance, Figures \ref{fig:1} and \ref{fig:2} are genuine dynamical features rather than numerical artefacts. In particular, when numerical simulations reveal 
    physical finite-dimensional invariant measures, our theory indicates that these are projections of an underlying invariant measure that is  infinite-dimensional.

  In view of the above,  one obtains   a numerical approximation of a stationary distribution $\nu$ using discrete sums:
\begin{equation}
\nu(A) \approx \frac{1}{(M+1)N} \sum_{k=0}^M 
\sum_{i=1}^N \mathbf{1}_A(x_i(t_k)),\qquad \mathbf 1_A(x)=\begin{cases}
    1&x\in A,\\ 0&x\not\in A,
\end{cases}\label{CH3:eq:i:histogram}
\end{equation}
where $  x_i(t_k)$  denotes the $i$-th trajectory at time $t_k=\alpha+k\Delta t$, on a uniform grid of length $T=M\Delta t$ with step size $\Delta t$, and where $N$ denotes the number of trajectories. This discrete formulation directly corresponds to creating a histogram as in Figures \ref{fig:1} and \ref{fig:2}. The phase portraits within these figures are a numerical approximation of the push-forward measure $\nu'$ of $\mu$ under the evaluation map $\varphi\to(\varphi(-1),\varphi(0)),$ i.e., $\nu'(A')=$ $\mu(\{\varphi\in C[-1,0]:(\varphi(-1),\varphi(0))\in A'\})$ for   $A'\subset \mathbb R^2.$ Here, $\nu$ and $\nu'$ are finite-dimensional projections of the infinite-dimensional object $\mu.$

To ensure that, e.g., \eqref{CH3:eq:i:histogram} provides a good approximation of the stationary distribution, $N$ and $M$ should initially be chosen sufficiently large, and the resulting histogram should remain stable under variations in $N$ and $M$. By doing so, we tacitly exploit the fact that the limit $\nu=\lim _{T\to\infty} \nu_T$ exists (and thus not only as a subsequential limit). 
We also observe that a single trajectory ($N = 1$) suffices to approximate the invariant measure, indicating  the  measure is ergodic. 
% Indeed, the same one- and two-dimensional histograms in both Figures \ref{fig:1} and \ref{fig:2} are obtained when  a single trajectory is considered, which suggests the underlying invariant measures are ergodic.
% (i.e., $ \frac{1}{T} \int_{0}^{T} \mathbf{1}_A(X(t)) \,\mathrm  dt \to \mu(A))$).

Finally, we believe that the   invariant measures for equation \eqref{eq:numeric} are unique  whenever $\sigma\neq 0$, because  noise has a regularizing effect. 
For the deterministic  equation \eqref{eq:det},  notice that we have a unique invariant measure fully concentrated at zero for $r<\pi/2$ and multiple invariant measures for $r>\pi/2$:   one corresponding to the now unstable zero equilibrium state, one corresponding to  the unique slowly oscillating periodic orbit, and taking convex combinations of the previous two. This follows from the recently established results \cite{Berg18,jaquette2019proof}. Theoretically speaking, there may even be values  $r$ for which  rapidly oscillating periodic solutions exist \cite[Sec. 7]{jaquette2019proof}, implying the existence of another invariant measure (no matter its stability). Nevertheless, the presence of noise is expected to destroy unstable invariant structures and,  in the case of co-existing attractors, induce metastable switching between basins of attraction. As a consequence, one may indeed expect the stochastic system \eqref{sde1} to admit a unique non-trivial invariant measure.

% The existence of $\mu$ and $\nu$ in Theorem \ref{main-thrm} follows from the Krylov--Bogoliubov procedure.  In fact, we   know from this theorem  in combination with these illustrations that even much more rich underlying structures exist. In practice,  the Krylov–Bogoliubov method typically constructs the physically “relevant” invariant measures\footnote{Physically,  representing experimental time averages.  In the literature, Eckmann and Ruelle call these therefore  physical measures \cite{eckmann1985ergodic}.} for negative feedback systems; we explain why this is the case in more detail below.

%\input{Wright.bbl}
\printbibliography

\end{document}